\let\mathcal\mathscr
\numberwithin{equation}{section}
\renewcommand{\phi}{\varphi}
\renewcommand{\rho}{\varrho}
\newcommand{\0}{\mathbf{0}}
\newcommand{\PP}{\mathbb{P}}
\newcommand{\FF}{\mathbb{F}}
\newcommand{\ZZ}{\mathbb{Z}}
\newcommand{\NN}{\mathbb{N}}
\newcommand{\QQ}{\mathbb{Q}}
\newcommand{\RR}{\mathbb{R}}
\newcommand{\val}{{\rm val}}
\renewcommand{\leq}{\leqslant}
\renewcommand{\geq}{\geqslant}
\newcommand{\h}{\mathbf{h}}
\newcommand{\m}{\mathbf{m}}
\newcommand{\x}{\mathbf{x}}
\renewcommand{\c}{\mathbf{c}}
\renewcommand{\v}{\mathbf{v}}
\renewcommand{\u}{\mathbf{u}}
\newcommand{\w}{\mathbf{w}}
\renewcommand{\b}{\mathbf{b}}
\renewcommand{\k}{\mathbf{k}}
\newcommand{\s}{\mathbf{s}}
\newcommand{\ve}{\varepsilon}
\renewcommand{\t}{\mathbf{t}}
\newcommand{\1}{\mathbf{1}}
\newcommand{\rd}{\mathrm{d}}
\newtheorem{thm}{Theorem}[section]
\newtheorem{lem}[thm]{Lemma}
\theoremstyle{definition}
\numberwithin{equation}{section}
\begin{document}
\date{\today}

\title[Arithmetic of  higher-dimensional orbifolds]{Arithmetic of higher-dimensional orbifolds and a mixed Waring problem}

\author{Tim Browning}

\address{IST Austria\\
Am Campus 1\\
3400 Klosterneuburg\\
Austria}
\email{tdb@ist.ac.at}

\author{Shuntaro Yamagishi}
\address{Mathematisch Instituut\\
Universiteit Utrecht\\ 
Budapestlaan 6\\
NL-3584CD Utrecht\\
The Netherlands}

\email{s.yamagishi@uu.nl}

\thanks{2010  {\em Mathematics Subject Classification.} 11D45 (11P55, 14G05)}

\begin{abstract}
We study the density of rational points on a higher-dimensional orbifold
$(\PP^{n-1},\Delta)$ when
 $\Delta$ is a $\QQ$-divisor involving hyperplanes.
This allows us to address a question of Tanimoto about  whether the set of rational points on such an orbifold  constitutes a thin set.
Our approach relies on  the Hardy--Littlewood circle method 
to first study an asymptotic version of 
Waring's problem for mixed powers. In doing so we make crucial use of the
 recent resolution of the main conjecture in
Vinogradov's mean value theorem, due to Bourgain--Demeter--Guth and Wooley.
\end{abstract}

\maketitle

\thispagestyle{empty}
\setcounter{tocdepth}{1}
\tableofcontents

\section{Introduction}

This paper is about the arithmetic of rational points on higher-dimensional orbifolds,   in the spirit of Campana
\cite{campana}.
We shall be concerned  with orbifolds $(\PP^{n-1},\Delta)$, where $\Delta$ is a $\QQ$-divisor that takes the shape
$$
\Delta=\sum_{i=0}^r\left(1-\frac{1}{m_i}\right) D_i,
$$
for irreducible divisors  $D_0,\dots, D_r$  on $\PP^{n-1}$ and integers  $m_0,\dots,m_r\geq 2$.
The arithmetic of {\em Campana-points} on orbifolds interpolates between the theory of rational and integral points on classical algebraic varieties, thereby opening up a new field of enquiry.

 The orbifold
$(\PP^{n-1},\Delta)$ is smooth if the divisor $\sum_{i=0}^r D_i$ is  strict normal crossings and it is said to be log-Fano if
$-K_{\PP^{n-1},\Delta}$ is ample,
where
$K_{\PP^{n-1},\Delta}=K_{\PP^{n-1}}+\Delta$.
Forthcoming work  of Pieropan, Smeets,
Tanimoto and V\'arilly-Alvarado \cite{c-manin} introduces the notion of 
Campana-points on higher-dimensional orbifolds and 
studies their distribution  on vector group compactifications.
Motivated by  the Manin conjecture for rational points on Fano varieties \cite{fmt}, it is very natural to ask
what one can say about the density  of Campana-points of bounded height on smooth log-Fano  orbifolds $(\PP^{n-1},\Delta)$.
We shall address this in the special case that $D_0,\dots,D_r$ form a set of
distinct  hyperplanes in $\PP^{n-1}$, all defined over $\QQ$.
Then $(\PP^{n-1},\Delta)$ is log-Fano 
precisely when
$$
n-(r+1)+\sum_{i=0}^r\frac{1}{m_i}>0.
$$
Since $m_i\geq 2$ this forces us to have $r\leq 2(n-1).$
It turns out that the analysis is  easy when $r\leq n-1$ and so the first challenging case is when $r=n$, in which case the condition for being log-Fano is
$$
\sum_{i=0}^n\frac{1}{m_i}>1.
$$

We shall take
$$
D_i=\begin{cases}
\{x_i=0\} & \text{ if $0\leq i\leq n-1$,}\\
\{c_0x_0+\dots+c_{n-1}x_{n-1}=0\} & \text{ if $i=n$},
\end{cases}
$$
for a fixed choice of non-zero integers  $c_0,\dots,c_{n-1}$.
We let
$$
\Delta=\sum_{i=0}^{n}\left(1-\frac{1}{m_i}\right) D_i,
$$
for given integers $m_i\geq 2$.   The Campana-points in $(\PP^{n-1},\Delta)$
are defined to be  the rational points
$(x_0:\dots:x_{n-1})\in \PP^{n-1}(\QQ)$, represented by primitive integer vectors
$(x_0,\dots,x_{n-1})\in \ZZ_{\neq 0}^{n}$ for which
$x_i$ is $m_i$-full for $0\leq i\leq n-1$ and
$c_0x_0+\dots+c_{n-1}x_{n-1}$ is $m_n$-full.
Here, we recall that a non-zero integer $x$ is said to be $m$-full if $p^m\mid x$ whenever there is a prime $p$ such that  $p\mid  x$.

We attach the usual exponential height function $H: \PP^{n-1}(\QQ)\to \RR$, given by
$H(x_0:\dots: x_{n-1})=\max_{0\leq i\leq n-1}|x_i|$ if $(x_0,\dots,x_{n-1})\in \ZZ^{n}$ is primitive.
The counting function of interest to us here is then
\begin{equation}\label{eq:N(B)}
N(\PP^{n-1},\Delta;B)=\frac{1}{2}\#\left\{\x\in \ZZ_{\neq 0}^{n+1}:
\begin{array}{l}
\gcd(x_0,\dots,x_{n-1})=1\\
|\x|\leq B,
\text{ $x_i$ is $m_i$-full $\forall ~i$}\\
c_0x_0+\dots+c_{n-1}x_{n-1}=x_n
\end{array}{}
\right\},
\end{equation}
where $\x=(x_0,\dots,x_n)$ and $|\x|=\max_{0\leq i\leq n}|x_i|.$
In the special case   $m_0=\dots=m_n=2$, work of Van Valckenborgh \cite{KVV} establishes an asymptotic formula for
$N(\PP^{n-1},\Delta;B)$
 for all   $n\geq 4$. Drawing inspiration from this, we have the following generalisation.

\begin{thm}\label{t:1}
Assume that
 $m_0,\dots,m_{n}\geq 2$ and
\begin{eqnarray}
\label{cond1}
\sum_{\substack{0 \leq i \leq  n\\ i\neq j}} \frac{1}{m_i(m_i+1)} \geq 1
\end{eqnarray}
for some $j\in \{0,\dots,n\}$.
Then there exist  constants $c\geq 0$
and  $\eta>0$
such that
$$
N(\PP^{n-1},\Delta;B)= c B^{\sum_{i=0}^n \frac{1}{m_i}-1}
+
O\left(B^{\sum_{i=0}^n \frac{1}{m_i}-1-\eta}\right).
$$
\end{thm}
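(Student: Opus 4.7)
The plan is to apply the Hardy--Littlewood circle method to a mixed Waring problem obtained from the orbifold count. The first step is to strip the $m_i$-fullness conditions from the variables: writing each $m_i$-full integer uniquely as $x_i = w_i y_i^{m_i}$ with $w_i$ running over a sparse set of ``cores'' of polynomial density (of order $\asymp X^{1/(m_i+1)}$ up to $X$) and $y_i\in \ZZ\setminus\{0\}$. Setting $c_n=-1$ and applying Möbius inversion to the constraint $\gcd(x_0,\ldots,x_{n-1})=1$ reduces the counting problem to a sum over small tuples $\w=(w_0,\ldots,w_n)$ of the number of integer solutions to the weighted Waring equation
$$
\sum_{i=0}^n c_i w_i y_i^{m_i}=0,\qquad |y_i|\leq (B/w_i)^{1/m_i},
$$
possibly with further coprimality conditions handled by additional Möbius sums. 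Orthogonality rewrites this count as $\int_0^1\prod_{i=0}^n g_i(\alpha)\,d\alpha$, where
$$g_i(\alpha)=\sum_{|y_i|\leq (B/w_i)^{1/m_i}}\e(c_i w_i \alpha y_i^{m_i}).$$

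We carry out the standard Farey dissection $[0,1]=\mathfrak{M}\sqcup\mathfrak{m}$. On the major arcs $\mathfrak{M}$, Poisson summation combined with a local density analysis---accounting for the $m_i$-fullness information at each prime---produces a main term which, after summation over $\w$, assembles into $cB^{\sum_i 1/m_i-1}$ with $c\geq 0$ expressible as an archimedean singular integral times a convergent singular series of $p$-adic densities.

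The crux of the argument is the minor-arc estimate. Fix the index $j$ furnished by hypothesis \eqref{cond1}. A Weyl-type differencing argument yields $|g_j(\alpha)|\ll (B/w_j)^{1/m_j-\sigma}$ for some $\sigma>0$ uniformly for $\alpha\in\mathfrak{m}$. For the remaining factors, condition \eqref{cond1} permits us to choose exponents $s_i\geq m_i(m_i+1)$ (for $i\neq j$) with $\sum_{i\neq j}1/s_i=1$, whence H\"older's inequality gives
$$\int_0^1\prod_{i\neq j}|g_i(\alpha)|\,d\alpha\leq\prod_{i\neq j}\biggl(\int_0^1|g_i(\alpha)|^{s_i}\,d\alpha\biggr)^{1/s_i}.$$
The recently resolved Vinogradov main conjecture (Bourgain--Demeter--Guth, Wooley) supplies the mean-value bound $\int_0^1|g_i(\alpha)|^{s_i}\,d\alpha\ll_\varepsilon (B/w_i)^{s_i/m_i-1+\varepsilon}$. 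Combining these with the Weyl saving for $g_j$ yields
$$\int_{\mathfrak{m}}\prod_{i=0}^n g_i(\alpha)\,d\alpha\ll B^{\sum_i 1/m_i-1-\sigma+\varepsilon}$$
uniformly in $\w$, preserving the power saving through the outer summation over $\w$.

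The main obstacle will be the uniformity of all estimates in the parameters $\w$: both the Weyl bound and the mean-value inequality must depend on $\w$ only through the natural $(B/w_i)$-scaling, and the major-arc main term must come with error terms strong enough to survive the outer sum together with the Möbius inversion. A secondary delicacy is the treatment of the singular series arising from the mixed $m_i$-fullness local factors---establishing its convergence, identifying the limit as the constant $c$, and verifying $c\geq 0$.
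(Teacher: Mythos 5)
Your plan follows essentially the same route as the paper: parametrizing each $m_i$-full integer as a sparse core times an $m_i$-th power, passing by orthogonality to a weighted Waring integral summed over cores, and---for the decisive minor-arc step---peeling off one factor via a Weyl-type bound while treating the remaining $n$ factors by H\"older together with the $k(k+1)$-moment mean value supplied by the resolution of Vinogradov's main conjecture, which is precisely the mechanism behind Theorem~\ref{t:M} and Lemmas~\ref{lem5.3} and~\ref{lem:minor}. The one step you treat too casually is the coprimality constraint: since $p\mid w_jy_j^{m_j}$ amounts to the disjunction $p\mid w_j$ \emph{or} $p\mid y_j$, a plain M\"obius sum over $d\mid\gcd(\x)$ does not linearize the condition in the new variables, and one is forced into the more elaborate inclusion--exclusion over divisibility types at each prime that the paper encodes in the function $\varpi(\s,\t)$ in \S\ref{s:manin}; making that scheme converge uniformly while keeping the singular-series estimates under control is where most of the technical work lies.
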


The implied constant in this estimate is allowed to depend on $m_0,\dots,m_n, n$ and 
$c_0,\dots, c_{n-1}$, a convention that we shall adopt for all of the implied constants in this paper.   There is an explicit expression for the leading constant $c$ in
\eqref{eq:c1''} and \eqref{eq:airport}, as a convergent sum of local densities. It can be shown that $c>0$ if the underlying equations admit suitable non-singular solutions everywhere locally.
In Theorem \ref{t:1}  the  exponent of $B$ is equal to
$$
a=a(L,\Delta)=\inf\left\{\ell\in \RR: \ell [L]+[K_{\PP^{n-1},\Delta} ]\in C_{\text{eff}}(\PP^{n-1})\right\},
$$
where $[L] $ is the class of a hyperplane section in $\PP^{n-1}$. Moreover, the  exponent of $\log B$ is $b-1$, where
$b=b(L,\Delta)$ is
 the codimension of the minimal face of $\partial C_\text{eff}(\PP^{n-1})$ that contains
$a [L]+[K_{\PP^{n-1},\Delta}]$.  
When $m_0=\dots=m_n=2$ and $n\geq 4$ the
work of Van Valckenborgh \cite{KVV} shows that
the
asymptotic formula for $N(\PP^{n-1},\Delta;B)$
follows the same pattern,  with $a=\frac{n-1}{2}$ and $b=1$.
However, some caution must be exercised when asking
to what extent other  orbifolds conform to this behaviour, as the following result shows.

\begin{thm}\label{t:mid}
Let $n=3$ and
 $m_0=m_1=m_2=m_3=2$. Then
$$
N(\PP^{2},\Delta;B)\gg B\log B.
$$
\end{thm}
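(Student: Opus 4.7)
The plan is to produce an explicit subfamily of Campana $4$-tuples of cardinality $\gg B\log B$, obtained by twisting each $x_i$ by a fixed squareful integer so that the underlying equation becomes a split quadric surface over $\QQ$, whose integer points are known to satisfy an $\sim H^2\log H$ asymptotic.

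Assume without loss of generality that $\gcd(c_0,c_1,c_2)=1$ (the general case reduces to this after extracting the gcd). Set
\[
b_0=-c_0^3,\quad b_1=c_1^3,\quad b_2=c_2^3,\quad b_3=1,
\]
each of which is a squareful integer. For any nonzero integers $a_0,a_1,a_2,a_3$, define $x_i=b_i a_i^2$. Then each $x_i$ is squareful, as a product of a squareful constant and a perfect square, since for any prime $p\mid x_i$ one has $\ord_p(x_i)=3\ord_p(c_i)+2\ord_p(a_i)\geq 2$. The linear constraint $c_0x_0+c_1x_1+c_2x_2=x_3$ rewrites as
\[
-c_0^4a_0^2+c_1^4a_1^2+c_2^4a_2^2=a_3^2,
\]
and upon substituting $A_i=c_i^2a_i$ (for $i=0,1,2$) and $A_3=a_3$, this becomes the classical equation
\[
A_0^2+A_3^2=A_1^2+A_2^2,
\]
defining a smooth quadric surface $Q\subset\PP^3$ that is split over $\QQ$ (isomorphic to $\PP^1\times\PP^1$).

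The number of integer points on $Q$ with $\max_i\abs{A_i}\leq H$ is well known to be $\sim cH^2\log H$; this follows from the parametrisation of $Q$ by pairs of Gaussian integers of equal norm together with a divisor-sum analysis, the logarithmic factor arising precisely because $Q$ is $\QQ$-split of Picard rank $2$. Restricting to the sublattice $c_i^2\mid A_i$ for $i=0,1,2$ (automatic from our substitution) and choosing $H\asymp B^{1/2}$ (forced by $\abs{x_i}\leq B$) reduces the count by a positive constant depending only on $c_0,c_1,c_2$. Thus we obtain $\gg B\log B$ admissible quadruples $(a_0,a_1,a_2,a_3)$.

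Finally, a standard M\"obius inversion using the assumption $\gcd(c_0,c_1,c_2)=1$ shows that a positive proportion of these quadruples satisfy the primitivity condition $\gcd(x_0,x_1,x_2)=1$; for each prime $p\nmid c_0c_1c_2$ we need $\gcd(a_0,a_1,a_2)$ coprime to $p$ (giving an Euler factor of $1-p^{-3}$), while for primes $p\mid c_0c_1c_2$ one fixes a single $i=i(p)$ with $p\nmid c_{i(p)}$ and imposes $p\nmid a_{i(p)}$. Combining these steps yields $N(\PP^2,\Delta;B)\gg B\log B$. The most delicate point is this primitivity sieve, which must be coordinated with the divisibility conditions $c_i^2\mid A_i$; but this is a routine, if slightly involved, application of inclusion-exclusion and does not affect the order of magnitude.
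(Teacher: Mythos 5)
Your strategy is in essence the one the paper uses: freeze the ``squareful part'' of each coordinate once and for all so that the residual equation in the square variables defines a $\QQ$-split quadric surface, and then harvest the $\asymp H^2\log H$ integer points on that quadric (the paper fixes squarefree $v_j$ and signs $\epsilon_j$ with $-\epsilon_0\cdots\epsilon_3\,c_0c_1c_2\,v_0^3\cdots v_3^3$ a square and invokes Heath--Brown's uniform theorem on quadrics). However, your particular choice of twists introduces a genuine gap. Setting $x_i=\pm c_i^3a_i^2$ for $i=0,1,2$ forces every prime dividing $\gcd(c_0,c_1,c_2)$ to divide all of $x_0,x_1,x_2$, so if this gcd exceeds $1$ your family contains \emph{no} primitive vectors whatsoever. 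The opening ``without loss of generality'' does not repair this: writing $c_i=dc_i'$ with $d=\gcd(c_0,c_1,c_2)$ turns the Campana condition on the last coordinate into ``$d(c_0'x_0+c_1'x_1+c_2'x_2)$ is squareful'', which is neither implied by nor implies squarefullness of $c_0'x_0+c_1'x_1+c_2'x_2$ (e.g.\ $d=2$ and $c_0'x_0+c_1'x_1+c_2'x_2=9$), so the problem for $(c_0,c_1,c_2)$ does not reduce to the problem for $(c_0',c_1',c_2')$. Since the theorem is asserted for arbitrary nonzero coefficients, this case must be covered. The remedy is to concentrate the twist in a single coordinate rather than spreading a copy of $c_i$ into each $x_i$: for instance take $x_1=\pm a_1^2$ and $x_2=\pm a_2^2$, so that primitivity reduces to $\gcd(a_1,a_2)=1$ with no interference from the coefficients, and choose the remaining twists (and signs) to make the discriminant of the resulting quaternary form a square; one must then also check that the form is actually isotropic over $\QQ$, which for your hyperbolic form $\left\langle -1,1,1,-1\right\rangle$ was automatic but is not guaranteed in general by indefiniteness plus square discriminant alone.

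A secondary point: the quadric count has to be performed in lopsided boxes, on sublattices, and with the sieve moduli $p$ varying, and the inclusion--exclusion for primitivity only closes if the count is uniform in the modulus (so that the terms with $p\mid\gcd(a_0,a_1,a_2)$ contribute $O(H^2\log H/p^2)$ summably). Your appeal to ``well known'' plus ``routine inclusion-exclusion'' is precisely where the paper instead cites Heath--Brown's Theorem 7, which supplies this uniformity; that step is not a gap, but it is the part of the argument that genuinely needs a reference or a proof.
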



On the other hand, when $n=2$ we expect the counting function to
satisfy an asymptotic formula with associated constants $a=\frac{1}{2}$ and $b=1$. In fact,
 Browning and Van Valckenborgh \cite{bvv} have produced an explicit constant $c>0$ such that
$N(\PP^1,\Delta;B)\geq c(1+o(1)) B^{\frac{1}{2}}$
when $m_0=m_1=m_2=2$.

\medskip

Let $X$ be an integral variety over $\QQ$.
Recall from Serre
\cite[\S3.1]{Ser08}
that
 a {\em thin set} is a set  contained in a finite
union of thin subsets of $X(\QQ)$ of type $I$ and $II$. Here,
 a {\em type $I$ thin subset}  is a set of the form $Z(\QQ) \subset X(\QQ)$, where $Z$ is a
proper closed subvariety, and a {\em type $II$ thin subset} is a set of the form
$f(Y(\QQ))$, where $f : Y \to X$ is a generically finite dominant morphism with
$\dim Y=\dim X$,  $\deg f \geq 2$ and $Y$ geometrically integral.
It follows from
work of  Cohen  \cite{cohen} (as further expounded by    Serre
 \cite[Thm.~13.3]{Ser97}) that the set
 $\PP^{n-1}(\QQ)$  is not thin.
 At the workshop
``Rational and integral points via analytic and geometric methods'' in  Oaxaca (May 27th--June 1st, 2018),
 Sho Tanimoto raised the  question of whether the same is true for the
 set of  Campana-points.  Our next goal is to provide some partial evidence in favour of this.

Associated to any type II thin subset $\Omega\subset \PP^{n-1}(\QQ)$ coming from a  morphism $Y \to \PP^{n-1}$ of degree $d\geq 2$
is a degree $d$  extension of function fields $\QQ(Y)/\QQ(t_1,\dots,t_{n-1})$. We let 
$\QQ(Y )^{\mathrm{Gal}}$ be the Galois closure of 
$\QQ(Y)$ over the function field $\QQ(t_1,\dots,t_{n-1})$ of $\PP^{n-1}$ and we let 
$\QQ_\Omega\subset \QQ(Y )^{\mathrm{Gal}}$ be the largest subfield that is algebraic over $\QQ$.  Finally we let 
$\mathsf{P}_\Omega$ be the set of rational primes that split completely in
$\QQ_\Omega$. It follows from the Chebotarev density theorem that 
$\mathsf{P}_\Omega$ has density $[\QQ_\Omega:\QQ]^{-1}$ in the set of primes, since $\QQ_\Omega/\QQ$ is Galois.
Next, let
\begin{equation}\label{eq:Q}
\mathsf{Q}_{\mathbf{m}}=\left\{\text{$p$ prime}:
\begin{array}{r}
\mathrm{lcm}
\left(\gcd(m_0,p-1),\dots,\gcd(m_n,p-1)\right)
\\
=\prod_{0\leq i\leq n}\gcd(m_i,p-1)
\end{array}
\right\},
\end{equation}
for any $\mathbf{m}=(m_0,\dots,m_n)\in \ZZ_{\geq 2}^{n+1}$.
 The following result provides an explicit  condition on the possible   thin sets that the Campana-points in $(\PP^{n-1},\Delta)$ are allowed to lie in.

\begin{thm}\label{t:2}
Assume that $m_0,\dots,m_n\geq 2$ and  \eqref{cond1} holds.
Let $\Omega\subset \bigcup_i\Omega_i$ be a thin set
where each  $\Omega_i\subset \PP^{n-1}(\QQ)$ is a thin subset of type I or II.
Assume that
\begin{equation}\label{eq:density}
\liminf_{x\to \infty}
\frac{\#\{p\in \mathsf{P}_{\Omega_i}\cap \mathsf{Q}_{\mathbf{m}}: p\leq x\}}{\pi(x)} >0
\end{equation}
whenever $\Omega_i$ is type II.
Then
$$N_{\Omega}(\PP^{n-1},\Delta;B) =o_\Omega(B^{\sum_{i=0}^n \frac{1}{m_i}-1}),$$
where
$N_{\Omega}(\PP^{n-1},\Delta;B)$ is defined as in \eqref{eq:N(B)}, but with the extra constraint that
$(x_0:\dots:x_{n-1})\in \Omega$.
\end{thm}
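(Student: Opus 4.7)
The plan is to reduce to a single thin subset $\Omega_i$ by a union bound and then treat the two types separately. For a type I piece $\Omega_i = Z(\QQ)$ with $Z \subsetneq \PP^{n-1}$ a proper closed subvariety, I would adapt the circle method used for Theorem \ref{t:1} by imposing the additional polynomial equation(s) cutting out $Z$. This removes at least one degree of freedom from the underlying mixed Waring system and should yield a count of the form $O(B^{\sum_{i=0}^n 1/m_i - 1 - \eta'})$ for some $\eta' > 0$, which is certainly $o(B^{\sum_{i=0}^n 1/m_i - 1})$.

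For a type II piece $\Omega_i = f_i(Y_i(\QQ))$, where $f_i : Y_i \to \PP^{n-1}$ is generically finite of degree $d_i \geq 2$, I would run a congruence sieve. The geometric input is that for primes $p$ of good reduction which split completely in $\QQ_{\Omega_i}$, the image $f_i(Y_i(\FF_p))$ misses a fixed positive proportion $\delta_{f_i} > 0$ of $\PP^{n-1}(\FF_p)$, up to an $O(p^{-1/2})$ Lang--Weil error; this is a standard Chebotarev consequence of the definition of $\QQ_{\Omega_i}$ as the algebraic part of $\QQ(Y_i)^{\mathrm{Gal}}$. The role of the condition $p \in \mathsf{Q}_\mathbf{m}$ is to ensure that the local Campana condition modulo $p$ factorises cleanly across the coordinates: the coprimality of the indices $\gcd(m_i, p-1)$ encoded in \eqref{eq:Q} means that Campana residues equidistribute across $(\FF_p^{\times})^{n+1}$ modulo the linear relation defining $D_n$, so the proportion of Campana residues lying in $f_i(Y_i(\FF_p))$ is at most $1 - \delta_{f_i} + O(p^{-1/2})$.

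To exploit this I would prove an extension of Theorem \ref{t:1} allowing congruence conditions modulo $q \leq B^{\eta}$, whose main term factorises through modified local densities at primes $p \mid q$. Taking $\mathcal{P} \subset \mathsf{P}_{\Omega_i} \cap \mathsf{Q}_\mathbf{m}$ to be the primes up to a slowly growing parameter $Q(B)$, chosen so that $\prod_{p \in \mathcal{P}} p \leq B^{\eta}$, and restricting to Campana points with $\mathbf{x} \bmod p \in f_i(Y_i(\FF_p))$ for every $p \in \mathcal{P}$, this uniform asymptotic yields
\[
N_{\Omega_i}(\PP^{n-1},\Delta;B) \leq c\,B^{\sum_{i=0}^n 1/m_i - 1} \prod_{p \in \mathcal{P}} \bigl(1 - \delta_{f_i} + O(p^{-1/2})\bigr) + O\bigl(B^{\sum_{i=0}^n 1/m_i - 1 - \eta}\bigr).
\]
By \eqref{eq:density}, $|\mathcal{P}| \to \infty$ as $B \to \infty$, so the product tends to zero and the claimed $o$-bound follows. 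The main obstacle is the uniform-in-$q$ strengthening of Theorem \ref{t:1} with congruences at level $q$ as large as $B^{\eta}$: this requires revisiting the minor arc estimates in the circle method to track the dependence on $q$, and will probably require a quantitative refinement of the Vinogradov mean value inputs used for Theorem \ref{t:1}. Everything else reduces to standard Chebotarev and Lang--Weil inputs.
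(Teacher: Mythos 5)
Your treatment of the type II pieces is essentially the paper's argument: one sieves by congruence conditions modulo the primes $p\in\mathsf{P}_{\Omega_i}\cap\mathsf{Q}_{\mathbf{m}}$ up to a slowly growing threshold, uses Serre's Chebotarev/Lang--Weil bound $\#\Omega_i(\FF_p)\leq\kappa p^{n-1}$ with $\kappa<1$, and uses $p\in\mathsf{Q}_{\mathbf{m}}$ to ensure that passing from residues of $\x$ to residues of the underlying variables $u_j$ (via $x_j=\pm u_j^{m_j}\prod_r v_{j,r}^{m_j+r}$) costs exactly the factor $(p-1)$ from projective rescaling and not an extra $\prod_j\gcd(m_j,p-1)$, which would otherwise swallow the saving $\kappa<1$. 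The ``main obstacle'' you flag --- a version of the count uniform in a modulus as large as $B^{\eta}$ --- is not an obstacle at all here: Theorem \ref{t:M} is already stated with the congruence $\u\equiv\h\bmod H$ and with error terms $E_1,E_2$ explicit in $H$, and Lemmas \ref{improved Hua in residue class} and \ref{lem5.3} are uniform in the shift and modulus, so no further refinement of the Vinogradov mean value inputs is required; one simply needs $H_i^{2(n+1)}\leq B^{\eta}$, which holds comfortably for $H_i=\prod_{p\in\mathcal{S}_i}p$ with $\mathcal{S}_i$ the relevant primes up to $\log B/\log\log B$.

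The genuine gap is in your type I case. You propose to ``impose the additional polynomial equation(s) cutting out $Z$'' inside the circle method and assert that this ``should yield'' a power saving. The circle method of \S2 handles a single diagonal equation in the variables $u_j$ (after fixing $\v$); there is no mechanism for adjoining an arbitrary auxiliary equation of unknown degree in the original coordinates $x_j$, and the claim that losing ``one degree of freedom'' produces a power saving for the count of $m_i$-full points on $Z$ is unsubstantiated --- the Campana points are already very sparse, so naive dimension counting does not suffice. The fix is to run your own type II machinery on the type I pieces as well: by Lang--Weil, $\#Z(\FF_p)\ll p^{n-2}$, so the local density of the condition $\x\bmod p\in Z(\FF_p)$ is $O(1/p)$ (rather than $\kappa<1$), and imposing it for all $p\in\mathcal{S}_i$ multiplies the main term by $O(H_i^{-1+\ve})$, which tends to zero; here one must also control the primes $p$ dividing $w_0\cdots w_n$, where the clean bound fails, but the $(m_j+1)$-fold divisibility $\val_p(w_j)\geq m_j+1$ makes their contribution harmless. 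With that substitution your outline matches the proof in \S\ref{s:thin}.
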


Assuming that \eqref{cond1} holds, we may combine this result with Theorem \ref{t:1} to deduce that the    Campana-points in $(\PP^{n-1},\Delta)$ are not contained in
	any thin set satisfying the hypotheses of the theorem.
The statement of this  result is rather disappointing at first glance, but in fact the conclusion is false when the condition
\eqref{eq:density} is dropped.
To see this, take  $m_0=\dots=m_n=3$ and $n\geq 12$. Then
$\sum_{i=0}^n \frac{1}{m_i}-1=\frac{n-2}{3}$ and \eqref{cond1} holds in Theorem \ref{t:1}.
Consider the thin set
$\Omega_0\subset \PP^{n-1}(\QQ)$  that arises from the
 morphism
$$
Z \to \PP^{n-1}, \quad
(x_0:\dots: x_{n}) \mapsto (x_0: \dots: x_{n-1}),
$$
where $Z\subset \PP^{n}$ is the cubic hypersurface 
$x_0^3+\dots+x_{n-1}^3=x_n^3$.
Then
the counting
function
$N_{\Omega_0}(\PP^{n-1},\Delta;B)$ has exact order $B^{\frac{n-2}{3}}$
for sufficiently large $n$. However,
\eqref{eq:density} fails in this case. Indeed,
$\mathsf{Q}_{\mathbf{m}}$ is the set of primes $p\not \equiv 1 \bmod{3}$, whereas
$\mathsf{P}_{\Omega_0}$ is the set of primes
$p \equiv 1 \bmod{3}$, since $\QQ_{\Omega_0}=\QQ(\sqrt{-3})$.
This shows that it is hard to approach Tanimoto's question in full generality through counting arguments alone.

The hypothesis \eqref{eq:density} is a little awkward to work with.
If one restricts attention to $\mathbf{m}$ such that
\begin{equation}\label{eq:gcd-m}
\gcd(m_j, m_{j'}) = 1 \quad  \text{ for $0 \leq j < j' \leq n$},
\end{equation}
then $\mathsf{Q}_\mathbf{m}$ is equal to the full set of rational primes. Moreover,  it follows from  Chebotarev's density theorem that $\mathsf{P}_{\Omega}$ has density $[\QQ_{\Omega}:\QQ]^{-1}$, for any type II thin subset $\Omega$. Thus the conditions of Theorem \ref{t:2} are met for any thin set. However, the assumption \eqref{cond1} is
 too stringent to cope with a sequence of integers $\geq 2$ that  satisfy \eqref{eq:gcd-m}.

\medskip

Our proof of Theorems \ref{t:1}--\ref{t:2} relies on an explicit description of $m$-full integers $x$.
For such integers  every exponent of a prime appearing in the prime factorisation of $x$ can be written  $k m + (m + r)$,  for integers $k \geq 0$ and $0 \leq r < m$.
Thus any non-zero   $m$-full integer $x$ can be written uniquely in the form
\begin{equation}\label{eq:sign}
x = \textnormal{sign}(x) \  u^{m} \prod_{r = 1}^{m-1} v_r^{m + r},
\end{equation}
for  $u, v_1, \ldots, v_{m - 1} \in \mathbb{N}$, such that  $\mu^2(v_r) =1$ for $1 \leq r \leq m-1$ and $\gcd(v_r, v_{r'}) = 1$ for $1 \leq r < r' \leq m-1$.

It may be  instructive to  illustrate this notation by  discussing the special case $m_0=\dots=m_n=2$, in which case
Campana-points in $(\PP^{n-1},\Delta)$ correspond to vectors $\mathbf{u},\mathbf{v}\in \NN^{n+1}$ and $\boldsymbol{\epsilon}\in \{\pm 1\}^{n+1}$ with each $v_j$ square-free,
 for which
$$
\epsilon_0c_0u_0^2v_0^3+\dots+\epsilon_{n-1}c_{n-1}u_{n-1}^2v_{n-1}^3=\epsilon_n u_n^2v_n^3.
$$
When $n=3$ we can clearly find vectors $\v\in \NN^4$ with square-free components and $\boldsymbol{\epsilon}\in \{\pm 1\}^4$ in such a way that
$$
-\epsilon_0\dots\epsilon_3 c_0c_1c_{2} v_0^3\dots v_3^3 =\square.
$$
Fixing such a choice and applying \cite[Thm.~7]{HB-crelle} to estimate the residual number of $\u\in \NN^{4}$  that lie on the split quadric, with $u_j\leq \sqrt{B/v_j^3}$ for $0\leq j\leq 3$,
we readily deduce that $N(\PP^{2},\Delta;B)\gg B\log B$, as claimed in Theorem \ref{t:mid}

Returning now to the  case of general $m_0,\dots,m_n\geq 2$,
we  summarise the structure of the paper.
Under the representation \eqref{eq:sign} it follows that  Campana-points on $(\PP^{n-1},\Delta)$ can be viewed through the lens of Waring's problem for mixed exponents.
Given its proximity to  Vinogradov's mean value theorem,
this is an area that has received a radical new injection of ideas at the hands of Wooley \cite{W0,W,W1} and Bourgain, Demeter and Guth \cite{BDG}.
Based on this, in \S \ref{s:HL} we shall give a completely general treatment
of the counting function associated to  suitably constrained integer solutions to the Diophantine  equation
$$
\sum_{0 \leq j \leq n} c_{j} \gamma_{j}  u_{j}^{m_j} = N,
$$
for given $N\in \ZZ$ and  non-zero $c_j,\gamma_j\in \ZZ$,
in which the vectors $\u$ are asked to lie in a  congruence class modulo $H$.
In this part of the argument we shall need to retain uniformity in the coefficients $\gamma_j$ and in the modulus $H$. It is here that the condition \eqref{cond1} arises.
The resulting  asymptotic formula is recorded in Theorem \ref{t:M}.
In \S \ref{s:manin}
we shall use Theorem \ref{t:M}  to establish the version of orbifold Manin that we have presented in Theorem \ref{t:1}. One of the chief difficulties in this part of the argument comes from dealing
with the coprimality conditions implicit in
the counting function $N(\PP^{n-1},\Delta;B)$.
Next,  in \S \ref{s:thin} we shall combine Theorem \ref{t:M} with information about the size of thin sets modulo $p$ (for many primes $p$) to
 tackle Theorem \ref{t:2}.

Finally, when $H=1$ and $c_j=\gamma_j=1$ for all $0\leq j\leq n$,   it is easy to derive from Theorem
\ref{t:M} an  asymptotic formula for the mixed Waring problem. The following result  may be of independent interest.

\begin{thm}\label{t:MW}
Assume that $m_0,\dots,m_n\geq 2$ and  \eqref{cond1} holds.
Let $R(N)$ denote the number of representations of a positive integer $N$ as
$$
N=x_0^{m_0}+\dots + x_n^{m_n}.
$$
Then there exists  $\eta>0$ such that
$$
R(N)=\frac{ \prod_{i=0}^n\Gamma(1+\frac{1}{m_i})}{\Gamma(\sum_{i=0}^n \frac{1}{m_i})} \mathfrak{S}(N) N^{\sum_{i=0}^n \frac{1}{m_i}-1} +O(N^{\sum_{i=0}^n \frac{1}{m_i}-1-\eta}),
$$
where $\mathfrak{S}(N)$ is given by \eqref{eq:eve}.
\end{thm}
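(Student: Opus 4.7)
The proposal is to derive Theorem \ref{t:MW} as a direct corollary of Theorem \ref{t:M} by setting $H=1$ and $c_j=\gamma_j=1$ for every $0\leq j\leq n$. Under these choices the congruence condition modulo $H$ becomes vacuous and the underlying equation $\sum_j c_j\gamma_j u_j^{m_j}=N$ collapses to the Waring equation $u_0^{m_0}+\cdots+u_n^{m_n}=N$. Since the hypothesis \eqref{cond1} is common to both theorems, Theorem \ref{t:M} applies and delivers an asymptotic of the form $R(N)=\mathfrak{J}(N)\mathfrak{S}(N)+O(N^{\sum_i 1/m_i-1-\eta})$ for some $\eta>0$, where $\mathfrak{J}(N)$ and $\mathfrak{S}(N)$ are, respectively, the singular integral and singular series that naturally emerge from the major-arc analysis. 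The singular series matches the expression in \eqref{eq:eve} directly, as both are Euler products whose local factors count solutions to the congruence $u_0^{m_0}+\cdots+u_n^{m_n}\equiv N \pmod{p^k}$.

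It then remains to evaluate $\mathfrak{J}(N)$. Starting from the standard representation
$$\mathfrak{J}(N)=\int_{-\infty}^{\infty}\prod_{j=0}^{n}\int_{0}^{N^{1/m_j}} e(\beta u_j^{m_j})\,\d u_j\cdot e(-\beta N)\,\d\beta,$$
the substitutions $u_j=N^{1/m_j}t_j$ and $\beta=N^{-1}\xi$ isolate the dependence on $N$ and yield
$$\mathfrak{J}(N)=N^{\sum_i 1/m_i-1}\int_{-\infty}^{\infty}\prod_{j=0}^{n}\int_{0}^{1}e(\xi t_j^{m_j})\,\d t_j\cdot e(-\xi)\,\d\xi.$$
The $\xi$-integral collapses, by Fourier inversion, to a surface integral over the simplex $\{t_j\geq 0,\sum_j t_j^{m_j}=1\}$, which after the further substitution $s_j=t_j^{m_j}$ becomes a classical Dirichlet beta-integral whose evaluation is
$$\frac{\prod_{j=0}^n\Gamma(1/m_j)/m_j}{\Gamma(\sum_j 1/m_j)}=\frac{\prod_{j=0}^n\Gamma(1+1/m_j)}{\Gamma(\sum_j 1/m_j)},$$
using $\Gamma(1+x)=x\Gamma(x)$. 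This is exactly the constant appearing in Theorem \ref{t:MW}.

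The main obstacle is really only a bookkeeping one: verifying that the archimedean factor extracted by the particular major-arc treatment carried out in \S\ref{s:HL} coincides with the oscillatory integral written above, and that the singular series emerging from that analysis matches the formal Euler product defined in \eqref{eq:eve}. Neither step requires new analytic input beyond what is already deployed in the proof of Theorem \ref{t:M}, so Theorem \ref{t:MW} should follow essentially immediately once Theorem \ref{t:M} is in hand.
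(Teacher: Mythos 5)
Your proposal is correct and follows exactly the route the paper takes: substitute $H=1$, $c_j=\gamma_j=1$, and $B=N$ into Theorem~\ref{t:M}, observe that $\mathfrak{S}_{\mathbf{c};\boldsymbol{\gamma}}(\mathbf{h},H;N)$ reduces to $\mathfrak{S}(N)$ of \eqref{eq:eve}, and evaluate the singular integral $\mathfrak{J}_{\mathbf{c}}$ via the Dirichlet beta-integral. The only difference is cosmetic --- the paper dismisses the computation of $\mathfrak{J}_{\mathbf{c}}=\prod_i\Gamma(1+1/m_i)/\Gamma(\sum_i 1/m_i)$ as a ``standard argument'', whereas you carry it out explicitly via Fourier inversion and the simplex substitution $s_j=t_j^{m_j}$.
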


There is relatively little in the literature concerning  asymptotic formulae for $R(N)$ for  mixed exponents. The best result is due to Br\"udern \cite{JB} who obtains an asymptotic formula for $R(N)$ when $m_0=m_1=2$, under some further conditions on the exponents, the most demanding of which is that
  $$
  \sum_{i=2}^n \frac{1}{m_i}>1.
  $$
Theorem \ref{t:MW} is not competitive with this, although it does not suffer from the  defect that $2$ must appear twice among the list of exponents.
It remains an interesting open challenge to prove an asymptotic formula for $R(N)$ for any value of  $n$, when $m_i=2+i$ for $0\leq i\leq n$.

When $m = m_0 = \dots = m_n$, which is the traditional  setting of  Waring's problem,
the  condition in \eqref{cond1} reduces to
$
n  \geq m^2+m. 
$
This shows that our approach is not completely optimal
in the equal exponent situation, since as  explained in
\cite[Cor.~14.7]{W1}, we know that 
$n\geq m^2-m+O(\sqrt{m})$
variables  suffice to get an asymptotic
formula in Waring's problem. It seems likely that by combining methods  developed by Wooley in \cite{WI} and \cite[\S 14]{W1}, one can   recover this loss. (The authors are grateful to Professor Wooley for this remark.)

\subsection*{Acknowledgements}
While working on this paper
the   authors were  both supported  by EPSRC grant \texttt{EP/P026710/1}, 
and the second author received additional support 
from  the NWO Veni Grant \texttt{016.Veni.192.047}.  Thanks are due to
 Marta Pieropan, Arne Smeets and Sho Tanimoto  for useful conversations related to this topic.

\section{The Hardy--Littlewood circle method}\label{s:HL}

We shall assume without loss of generality that
$2 \leq m_0\leq m_1 \leq \dots \leq m_n$. Our assumption
\eqref{cond1} translates into
\begin{equation}
\label{cond2}
\sum_{\substack{0 \leq j <  n}} \frac{1}{m_j(m_j+1)} \geq 1.
\end{equation}
In what follows it will be convenient to set
\begin{eqnarray}
\label{defnGamma}
\Gamma = \sum_{j=0}^{n} \frac{1}{m_j} - 1.
\end{eqnarray}
Let $N\in \ZZ$ and let $\mathbf{c}=(c_0,\dots,c_n)\in (\mathbb{Z} \backslash \{ 0 \})^{n+1}$. Let $H \in \mathbb{N}$, $\boldsymbol{\gamma} \in \mathbb{N}^{n+1}$ and let
$\mathbf{h} \in \{ 0, 1, \ldots, H-1 \}^{n+1}$.
The main results in this paper are founded on
an analysis of the counting function
\begin{equation}
\label{counting Mcgamma}
M_{\boldsymbol{\mathbf{c} ; \gamma} }(B ; \mathbf{h},{H};N)
=  \# \left\{  \mathbf{u} \in \mathbb{N}^{n+1} :
\begin{array}{l}
\gamma_{j} u_{j}^{m_j} \leq B, \text{ for $0 \leq j \leq n$}\\
\mathbf{u} \equiv \mathbf{h} \bmod{H}\\
\sum_{0 \leq j \leq n} c_{j} \gamma_{j}  u_{j}^{m_j} = N
\end{array}
 \right\}.
\end{equation}
We shall view $\c$ as being fixed, once and for all, but
$\boldsymbol{\gamma}$ can grow and so we will need all of our estimates to depend explicitly on
it.  In Theorems \ref{t:1} and \ref{t:2} we shall take $N=0$ and $c_n=-1$, whereas in Theorem \ref{t:MW} we take $H=1$, $c_j=\gamma_j=1$ and $B =N$.

Let
$$
B_{j} = (B/\gamma_{j})^{1/m_j},
$$
and
$$
S_{j} (\alpha) = \sum_{ \substack{ 1 \leq  u \leq B_{j} \\ u \equiv h_{j} \bmod{H} } } e\left(\alpha  c_{j} \gamma_{j} u^{m_j} \right),
$$
for $0\leq j \leq n$. Then we may write
\begin{eqnarray}
\label{orthog1}
M_{ \mathbf{c} ;  \boldsymbol{\gamma} }(B ; \mathbf{h},{H};N ) = \int_0^1 \mathcal{S}_{\boldsymbol{\gamma}} (\alpha) \rd \alpha,
\end{eqnarray}
where
$$
\mathcal{S}_{\boldsymbol{\gamma}}(\alpha) =e(-\alpha N) \prod_{j=0}^{n} S_{j} (\alpha).
$$
Note that we may freely assume that  $\gamma_j \leq B$ for  $0 \leq j \leq n$, since otherwise
$M_{\boldsymbol{\mathbf{c} ; \gamma} }(B ; \mathbf{h},{H};N ) = 0$.
Let $\delta$ be such that
\begin{eqnarray}
\label{deltass1}
0<\delta < \frac{1}{(2n+5) m_n (m_n+1)}.
\end{eqnarray}
We define the \textit{major arcs} $\mathfrak{M}$ to be
$$
\mathfrak{M} = \bigcup_{ \substack{ 0 \leq a \leq q \leq B^{\delta} \\ \gcd(a,q) = 1 }  } \mathfrak{M}(a,q),
$$
where
$$
\mathfrak{M}(a,q) = \{ \alpha \in [0,1) : |\alpha - a/q| < B^{-1+\delta} \}.
$$
We define the \textit{minor arcs} to be  $\mathfrak{m} = [0,1) \backslash \mathfrak{M}$.

\subsection{Contribution from the major arcs}

In the standard way we shall need to show that on the major arcs our exponential sums can be approximated by integrals, with acceptable error.  The following result is a straightforward adaptation of familiar facts.

\begin{lem}
\label{lem3.1}
Let $h,H \in \mathbb{N} \cup \{0\}$ with  $0 \leq h < H$. Let $X \geq 1$.
Let $a \in \mathbb{Z}$, $q \in \mathbb{N}$, $\beta \in \mathbb{R}$ and $\alpha = a/q  + \beta$.
Then
\begin{align*}
\sum_{ \substack{ 1 \leq u \leq X  \\ u \equiv h \bmod{H}  }} e(\alpha u^{m})
=~&\frac{X}{qH} \ \sum_{k=0}^{q-1} e\left(\frac{a (Hk + h)^m }{q}\right) \  \int_{0}^{1} e( \beta X^m z^{m}) \rd z\\
&\quad+ O \left( q + q X^m |\beta| \right).
\end{align*}
\end{lem}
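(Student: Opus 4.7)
The plan is to unfold both congruence conditions and then reduce the inner sum to a Riemann-sum approximation. I first write $u=h+Hk$ to encode $u\equiv h\bmod H$, then split $k=q\ell+r$ with $r\in\{0,1,\dots,q-1\}$ so that $u^m\bmod q$ depends only on $r$, namely $u^m\equiv (h+Hr)^m\bmod q$. Since $\alpha=a/q+\beta$, the phase factorises as $e(\alpha u^m)=e(a(h+Hr)^m/q)\,e(\beta u^m)$, and the whole sum becomes
$$\sum_{r=0}^{q-1} e\!\left(\frac{a(h+Hr)^m}{q}\right)\sum_\ell e\bigl(\beta\,u(\ell)^m\bigr),$$
where $u(\ell)=h+Hr+Hq\ell$ and $\ell$ ranges over an interval of length at most $X/(Hq)+1$.

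Next I approximate each inner sum by the corresponding integral. Writing $g(t)=e(\beta\,u(t)^m)$, the standard comparison of a Riemann sum with its integral yields
$$\left|\sum_\ell g(\ell)-\int g(t)\,\rd t\right|\ll 1+\int|g'(t)|\,\rd t.$$
The change of variables $v=u(t)$ converts $\int g(t)\,\rd t$ into $(Hq)^{-1}\int e(\beta v^m)\,\rd v$ over an interval essentially equal to $[1,X]$, and the further substitution $v=Xz$ produces the factor $\tfrac{X}{qH}\int_0^1 e(\beta X^m z^m)\,\rd z$ claimed in the statement; endpoint mismatches (from $v$ near $1$ or $X$) contribute only $O(1/(Hq))$ per residue class $r$, which are harmlessly absorbed into the error.

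For the error estimate, one has $|g'(t)|=2\pi|\beta|m\,u(t)^{m-1}\cdot Hq$; the factor $Hq$ cancels against the Jacobian when passing to $v$, and so $\int|g'(t)|\,\rd t\leq 2\pi|\beta|(X^m-1)$ uniformly in $r$, $q$ and $H$. Each of the $q$ residue classes then contributes an error $O(1+|\beta|X^m)$, summing to the global error $O(q+q|\beta|X^m)$ required by the statement, and reassembling the main terms gives precisely the claimed formula. The lemma is essentially a routine separation-of-scales calculation, but the one point that requires attention is exactly this uniformity in $r$ (and thus in $q$ and $H$): it is the internal cancellation between the arithmetic progression spacing $Hq$ and the Jacobian $(Hq)^{-1}$ that prevents any spurious dependence on $H$ from entering the error, which is why the quoted bound $O(q+qX^m|\beta|)$ involves only $q$ and not the larger quantity $qH$.
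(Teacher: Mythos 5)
Your argument is essentially the same as the paper's: split the progression $u\equiv h\bmod H$ further into residue classes modulo $q$, extract the arithmetic phase $e(a(h+Hr)^m/q)$, and approximate the residual sum by an integral using the first-derivative-test form of the Riemann-sum comparison (the paper invokes Euler--Maclaurin, which is the same device). One small slip: the endpoint mismatch per residue class is $O(1)$, not $O(1/(Hq))$, since the actual $v$-interval $[h+Hr,\,h+Hr+HqL]$ can differ from $[0,X]$ by as much as $Hq$ at each end before you divide by the Jacobian $Hq$; however this still sums to $O(q)$ over the $q$ classes and is absorbed into the claimed error term, so the conclusion is unaffected.
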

\begin{proof}
Let $X' = (X-h)/H$. If $X'<q$ then the absolute value of the left hand side is trivially bounded by $q+1$, and so we may proceed under the assumption that $X'
 \geq q$. We write
\begin{align*}
\sum_{ \substack{ 1 \leq u \leq X  \\ u \equiv h \bmod{H}  }} e(\alpha u^{m})
&=
\sum_{ \substack{ 0  < x \leq  X'  }} e(\alpha (H x + h )^{m}) + O(1)
\\
&=
\hspace{-0.2cm}
\sum_{k=0}^{q - 1} e \left( \frac{a (H k + h )^{m}}{q} \right)
\hspace{-0.2cm}
\sum_{ \substack{ 0  <  x \leq  X'  \\  x \equiv k \bmod{q}  }} e(\beta (H x + h )^{m}) + O(1).
\end{align*}
The inner sum is
$$
\sum_{ \substack{ 0  <  x \leq  X'  \\  x \equiv k \bmod{q}  }} e(\beta (H x + h )^{m})
=\sum_{ \substack{ -k/q  <  y \leq  (X'-k)/q }} e(\beta (qH y + h+Hk )^{m}).
$$
An application of  the Euler--Maclaurin summation  formula
to this  sum now yields the result.
\end{proof}

Now let $\alpha = a/q + \beta \in \mathfrak{M}(a,q)$.
We apply Lemma \ref{lem3.1} with $X=B_{j}$,
and $\alpha$ (resp.~ $a$) replaced by
$\alpha c_{j} \gamma_{j}$ (resp. $a c_{j}  \gamma_{j}$).
Thus
$\alpha c_{j} \gamma_{j} - a c_{j}  \gamma_{j}/q = \beta c_{j} \gamma_{j}$ and
$$
q + q B_{j}^{m_j} | \beta c_{j} \gamma_{j} | \ll q + q B |\beta | \ll B^{2 \delta}.
$$
Put
$$
\mathfrak{J}_{ \mathbf{c} }(L) =  \int_{-L}^Le(-\lambda N/B)
\ \prod_{j=0}^n \
\int_{0}^{1} e\left(\lambda c_{j}  z^{m_j}  \right)  \rd z \rd \lambda
$$
and set $\mathfrak{S}_{ \mathbf{c} ; {\boldsymbol{\gamma}}  }(L ;  \mathbf{h},{H};N)$ to be
$$
  \sum_{q \leq L} \frac{1}{q^{n+1}}
\sum_{\substack{ 0 \leq a < q \\   \gcd(a,q) = 1 } } e\left(-\frac{aN}{q}\right)
\ \prod_{j=0}^n \
 \sum_{ 0 \leq k < q  }
e\left(\frac{a}{q}  c_{j} \gamma_{j}   (H k + h_{j})^{m_j}  \right),
$$
for any $L>1$.
Then it follows from Lemma \ref{lem3.1} that
\begin{equation}
\begin{split}
\label{major arc of Sv'}
\int_{\mathfrak{M}} \mathcal{S}_{\boldsymbol{\gamma}}(\alpha)  \rd\alpha
=~&
\mathfrak{S}_{\mathbf{c};\boldsymbol{\gamma} }(B^{\delta} ;  \mathbf{h},{H};N)   \mathfrak{J}_{ \mathbf{c} }(B^{\delta})
\frac{  \prod_{j=0}^n  B_{j}}{H^{n+1} B}
+ O\left(E_1(\boldsymbol{\gamma}; H)\right),
\end{split}\end{equation}
where
$$
E_1(\boldsymbol{\gamma}; H)
=  B^{-1+\delta }  \sum_{q \leq B^{\delta}}
q  \sum_{y = 0}^{n} (B^{2 \delta})^{n+1-y} \max_{ j_1 < \dots < j_y } \prod_{i=1}^y  \left( \frac{B_{j_i}}{H} + 1 \right).
$$
Taking $H\geq 1$ and observing that $B_j\geq 1$ for all $0\leq j\leq n$ we see that
\begin{align*}
\max_{ j_1 < \dots < j_y } \prod_{i=1}^y  \left( \frac{B_{j_i}}{H} + 1 \right)
&\ll  \left(\frac{1}{B_0}+\dots+\frac{1}{B_n}\right)\prod_{j=0}^n B_j.
\end{align*}
On executing the sum over $q$ we therefore conclude that
\begin{equation}\label{eq:E1}
E_1(\boldsymbol{\gamma}; H)
\ll
\frac{\prod_{j=0}^n B_j}{B}
 \left(\frac{1}{B_0}+\dots+\frac{1}{B_n}\right)B^{(2n+5)\delta}.
\end{equation}
It remains to analyse the  terms $\mathfrak{S}_{\mathbf{c};\boldsymbol{\gamma} }(B^{\delta} ;  \mathbf{h},{H};N)$ and $\mathfrak{J}_{\mathbf{c}}(B^{\delta})$ .

Beginning with the singular series, it follows from \cite[Theorem 7.1]{V} that
\begin{equation}\label{eq:vaughan}
\Big{|} \sum_{0 \leq k < q} e\left(\frac{x (Hk + h)^m}{q} \right) \Big{|}
\ll
\gcd(x,q)^{1/m} H q^{1 - 1/m + \varepsilon}
\end{equation}
for any $\varepsilon > 0$.
Therefore
\begin{align*}
\Big|\sum_{ X < q \leq Y } \frac{1}{q^{n+1}}
\sum_{ \substack{ 0 \leq a < q  \\ \gcd(a,q) = 1} }
e\left(-\frac{aN}{q}\right)
 \prod_{j=0}^n \  \sum_{ 0 \leq k < q  }
&e\left(\frac{a}{q} c_{j} \gamma_{j} (H k + h_{j})^{m_j}  \right) \Big|\\
&\ll
E_2(\boldsymbol{\gamma}; H ; X, Y),
\end{align*}
where
$$
E_2(\boldsymbol{\gamma}; H ; X, Y)
= H^{n+1} \sum_{ X <  q \leq Y } q^{-\Gamma+ \varepsilon }
\prod_{j=0}^n
\gcd(\gamma_{j}, q)^{ 1/{m_j} }.
$$
Put
\begin{equation}\label{eq:E2}
E_2(\boldsymbol{\gamma}; H )
= H^{n+1} \sum_{ q=1 }^\infty q^{1-\Gamma+ \varepsilon }
\prod_{j=0}^n
\gcd(\gamma_{j}, q)^{ 1/{m_j} }.
\end{equation}
Clearly
$E_2(\boldsymbol{\gamma};  H ; B^{\delta}, \infty)\leq B^{-\delta} E_2(\boldsymbol{\gamma}; H )$ and
$E_2(\boldsymbol{\gamma};  H ; 0, \infty)\leq E_2(\boldsymbol{\gamma}; H )$.

In view of  \eqref{cond2}, we have
\begin{equation}
\label{bdd on rm/m}
\sum_{j=0}^n \frac{1}{m_j} \geq 3.
\end{equation}
Let us define
\begin{equation}\label{eq:SS}
\begin{split}
\mathfrak{S}_{ \mathbf{c} ; {\boldsymbol{\gamma}}  }(\mathbf{h},{H};N) = \sum_{q=1}^\infty \frac{1}{q^{n+1}}
&\sum_{\substack{ 0 \leq a < q \\   \gcd(a,q) = 1 } }
e\left(-\frac{aN}{q}\right)
\\
&\times
\prod_{j=0}^n \
 \sum_{ 0 \leq k < q  }
e\left(\frac{a}{q}  c_{j} \gamma_{j}   (H k + h_{j})^{m_j}  \right).
\end{split}
\end{equation}
This is absolutely convergent, since \eqref{eq:E2} and (\ref{bdd on rm/m}) yield
$$
 \mathfrak{S}_{ \mathbf{c};\boldsymbol{\gamma} }(\mathbf{h},{H};N)\ll E_2(\boldsymbol{\gamma} ; H ; 0, \infty)
\ll H^{n+1} \prod_{j=0}^n \gamma_{j}^{1/m_j}.
$$
Moreover,
\begin{eqnarray}
\label{ssest1}
\mathfrak{S}_{\mathbf{c};\boldsymbol{\gamma} }( B^{\delta} ; \mathbf{h},{H};N )  = \mathfrak{S}_{ \mathbf{c};\boldsymbol{\gamma} }(\mathbf{h},{H};N) +  O \left( B^{-\delta}E_2(\boldsymbol{\gamma}; H ) \right).
\end{eqnarray}

Turning to the singular integral, it follows
 from \cite[Lemma 2.8]{V} that
$$
 \int_{0}^{1} e\left(\lambda c_{j} z^{m_j}  \right)  \rd z  \ll \min \{ 1, | \lambda |^{-1/m_j} \}.
$$
Thus,  in view of (\ref{bdd on rm/m}), we deduce that
\begin{align*}
\int_{|\lambda| \geq B^{\delta} }
\prod_{j=0}^n \left|
\int_{0}^{1} e\left(\lambda c_{j}  z^{m_j}  \right)  \rd z  \right|  \rd \lambda
&\ll
\int_{|\lambda| \geq B^{\delta} } |\lambda|^{ - \sum_{ j=0 }^n \frac{1}{m_j}}  \rd \lambda
\ll
B^{- \delta \Gamma}.
\end{align*}
Hence
$$
\mathfrak{J}_{ \mathbf{c}} = \int_{-\infty}^\infty
e(-\lambda N/B)
\prod_{j=0}^n
\int_{0}^{1} e\left(\lambda c_{j}  z^{m_j}  \right)  \rd z    \rd \lambda
$$
is well-defined, and we have
\begin{eqnarray}
\label{siest1}
|\mathfrak{J}_{\mathbf{c}} - \mathfrak{J}_{ \mathbf{c}}(B^{\delta}) |  \ll  B^{ - \delta \Gamma}
\leq B^{-\delta}.
\end{eqnarray}

We are now ready to conclude our treatment of the major arcs.
Note that
$$
\frac{ \prod_{j=0}^n B_{j}}{H^{n+1} B}=\frac{B^\Gamma}{H^{n+1} \prod_{j=0}^n \gamma_j^{1/m_j}}.
$$
On combining
 (\ref{major arc of Sv'}), (\ref{ssest1}) and (\ref{siest1}), we therefore obtain the following result.

 \begin{lem}\label{lem:major}
 Assume that \eqref{bdd on rm/m} holds. Then
\begin{align*}
\int_{\mathfrak{M}} \mathcal{S}_{ \boldsymbol{\gamma} }(\alpha)  \rd\alpha
=~&
\frac{\mathfrak{S}_{ \mathbf{c};\boldsymbol{\gamma}  } ( \mathbf{h},{H};N)
 \mathfrak{J}_{  \mathbf{c} } }{
 H^{n+1} \prod_{j=0}^n \gamma_j^{1/m_j}
  }  B^{\Gamma}
+ O\left(E_1(\boldsymbol{\gamma}; {H})
+
 \frac{B^{ \Gamma- \delta   } E_2(\boldsymbol{\gamma};  H )}{
 \prod_{j=0}^n \gamma_j^{1/m_j}}\right).
\end{align*}
\end{lem}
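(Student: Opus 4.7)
The plan is to combine the three approximations already assembled in the discussion preceding the lemma. First, apply Lemma \ref{lem3.1} pointwise on each major arc $\mathfrak{M}(a,q)$ to each of the $n+1$ exponential sums $S_{j}(\alpha)$ appearing in $\mathcal{S}_{\boldsymbol{\gamma}}(\alpha)$; expanding the resulting product of $n+1$ approximations, separating out the main terms from the $O(q+qB|\beta|)\ll B^{2\delta}$ error terms in all $2^{n+1}$ ways, and then integrating over $\beta\in[-B^{-1+\delta},B^{-1+\delta}]$ and summing over coprime pairs $(a,q)$ with $q\le B^{\delta}$, produces precisely the expression (\ref{major arc of Sv'}), with a truncated main term and the error $E_{1}(\boldsymbol{\gamma};H)$ controlled as in (\ref{eq:E1}) via the elementary inequality $\max_{j_{1}<\dots<j_{y}}\prod_{i=1}^{y}(B_{j_{i}}/H+1)\ll (\sum_{j}B_{j}^{-1})\prod_{j}B_{j}$.

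Second, I would replace the truncated singular series $\mathfrak{S}_{\mathbf{c};\boldsymbol{\gamma}}(B^{\delta};\mathbf{h},H;N)$ by the complete singular series $\mathfrak{S}_{\mathbf{c};\boldsymbol{\gamma}}(\mathbf{h},H;N)$ of \eqref{eq:SS} using (\ref{ssest1}), paying an error of size $B^{-\delta}E_{2}(\boldsymbol{\gamma};H)$. The absolute convergence of the infinite series is guaranteed by (\ref{bdd on rm/m}) combined with the Weyl-type bound (\ref{eq:vaughan}). Analogously, (\ref{siest1}) permits the replacement of $\mathfrak{J}_{\mathbf{c}}(B^{\delta})$ by the complete singular integral $\mathfrak{J}_{\mathbf{c}}$ at a cost of $O(B^{-\delta})$, using the bound $\int_{0}^{1}e(\lambda c_{j}z^{m_{j}})\,\rd z\ll\min\{1,|\lambda|^{-1/m_{j}}\}$ from \cite[Lemma 2.8]{V} together with (\ref{bdd on rm/m}) to ensure integrability.

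Finally, I would collect the three approximations and use the identity $\prod_{j}B_{j}/B=B^{\Gamma}/\prod_{j}\gamma_{j}^{1/m_{j}}$ to rewrite the main term in the displayed form. The cross-errors absorb cleanly: trivially $|\mathfrak{J}_{\mathbf{c}}(B^{\delta})|\ll 1$, while the bound $|\mathfrak{S}_{\mathbf{c};\boldsymbol{\gamma}}(B^{\delta};\mathbf{h},H;N)|\ll E_{2}(\boldsymbol{\gamma};H)$ follows by the same argument as for the convergence of the full series, so the $B^{-\delta}$ savings from both truncation replacements combine (after multiplication by $\prod_{j}B_{j}/(H^{n+1}B)=B^{\Gamma}/(H^{n+1}\prod_{j}\gamma_{j}^{1/m_{j}})$) into the stated error $B^{\Gamma-\delta}E_{2}(\boldsymbol{\gamma};H)/\prod_{j}\gamma_{j}^{1/m_{j}}$. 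There is no real obstacle: all substantive estimates are already in hand, and the proof is a direct collation of (\ref{major arc of Sv'}), (\ref{ssest1}) and (\ref{siest1}). The only point requiring mild care is to ensure that the $E_{1}$ term, which arises \emph{before} the truncated singular series and integral are extended to their completed counterparts, genuinely absorbs every contribution that is not of the shape ``main term $\times$ (product of $B_{j}/H$)''—this is guaranteed by the uniform majorisation used in deriving \eqref{eq:E1}.
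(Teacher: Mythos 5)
Your proposal follows the paper's own argument exactly: apply Lemma \ref{lem3.1} term-by-term on each major arc, expand the product to produce \eqref{major arc of Sv'} with error $E_{1}$, then complete the truncated singular series and singular integral via \eqref{ssest1} and \eqref{siest1}, noting that $\mathfrak{J}_{\mathbf{c}}(B^{\delta})\ll 1$ and $\mathfrak{S}_{\mathbf{c};\boldsymbol{\gamma}}(B^{\delta};\mathbf{h},H;N)\ll E_{2}(\boldsymbol{\gamma};H)$ control the cross-terms. The bookkeeping and the final identity $\prod_{j}B_{j}/(H^{n+1}B)=B^{\Gamma}/(H^{n+1}\prod_{j}\gamma_{j}^{1/m_{j}})$ are correct, so the proof is complete and matches the paper.
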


\subsection{Contribution from the minor arcs}

According to work of Wooley
\cite[Eq.~(1.8)]{W1},
the main conjecture in Vinogradov's mean value theorem asserts that for each
$\varepsilon > 0$ and $t, k \in \mathbb{N}$, one has
\begin{equation}
\label{VMT3}
\int_{[0,1)^k} \Big{|} \sum_{1 \leq x \leq X}
\hspace{-0.2cm}
e(\alpha_k x^k + \alpha_{k-1}x^{k-1} + \dots + \alpha_1 x)  \Big{|}^{2t} \rd \boldsymbol{\alpha} \ll X^{t + \varepsilon} +  X^{2t - \frac{k(k+1)}{2}}.
\end{equation}
This result was established recently by Bourgain, Demeter and Guth \cite{BDG} using $\ell^2$-decoupling and also by Wooley \cite{W, W1} using efficient congruencing.
The following mean value estimate is a straightforward consequence of their work.

\begin{lem}
\label{improved Hua in residue class}
Let $k \in \mathbb{N}$ and let $s$ be a real number satisfying $s \geq k(k+1)$.
Let $A, H \in \mathbb{Z} \backslash \{0\}$ and $h \in \mathbb{Z}$.
Then we have
$$
\int_0^1 \Big{|} \sum_{1 \leq x \leq X} e(\alpha A (Hx + h)^k) \Big{|}^{s} \rd\alpha \ll X^{s - k},
$$
where the implied constant does not depend on  $A$, $H$ or $h$.
\end{lem}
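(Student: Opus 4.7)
The plan is to reduce the one-dimensional integral to a genuine Vinogradov-type mean value in $k$ variables by exploiting the fact that $(Hx+h)^k$ is a polynomial of degree $k$ in $x$ with non-zero leading coefficient $H^k$, and then to invoke \eqref{VMT3} as a black box. The argument naturally begins with two elementary reductions. Since $A \in \ZZ \setminus \{0\}$ and the integrand is $1$-periodic in $\alpha$, the substitution $\alpha \mapsto \alpha/A$ allows us to assume $A=1$. Writing $f(\alpha) = \sum_{1 \leq x \leq X}e(\alpha(Hx+h)^k)$, and using that $k(k+1)$ is always even, it suffices to establish the bound when $s=2t$ is an even integer with $2t \geq k(k+1)$; for larger real $s$ the trivial estimate $|f(\alpha)| \ll X$ yields $\int_0^1 |f|^{s}\,\rd\alpha \ll X^{s-2t} \int_0^1 |f|^{2t}\,\rd\alpha$, and the desired bound $X^{s-k}$ propagates.

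The core step is to expand $(Hx+h)^k = h^k + \sum_{j=1}^k c_j\, x^j$ with $c_j = \binom{k}{j} H^j h^{k-j}$, yielding
$$f(\alpha) = e(\alpha h^k)\,\Phi(\alpha c_1,\ldots,\alpha c_k), \qquad \Phi(\boldsymbol{\beta}) = \sum_{1 \leq x \leq X} e(\beta_1 x + \cdots + \beta_k x^k),$$
with the crucial property that $c_k = H^k \neq 0$. By orthogonality, $\int_0^1 |f|^{2t}\,\rd\alpha$ equals the number of integer tuples $(\mathbf{x}, \mathbf{y}) \in \{1,\ldots,\lfloor X \rfloor\}^{2t}$ satisfying the single equation $\sum_{j=1}^k c_j\, \sigma_j(\mathbf{x}, \mathbf{y}) = 0$, where $\sigma_j(\mathbf{x}, \mathbf{y}) := \sum_{i=1}^t (x_i^j - y_i^j)$. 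I would then stratify by the values of $(\sigma_1, \ldots, \sigma_{k-1}) \in \ZZ^{k-1}$: since $|\sigma_j| \ll X^j$, there are $O(X^{k(k-1)/2})$ such strata, and within each the value of $\sigma_k$ is uniquely determined because $c_k \neq 0$. By Fourier positivity, the number of $(\mathbf{x}, \mathbf{y})$ realising any prescribed vector $(\tau_1, \ldots, \tau_k)$ of $\sigma$-values is at most $\int_{[0,1)^k} |\Phi(\boldsymbol{\beta})|^{2t}\,\rd\boldsymbol{\beta}$, which by \eqref{VMT3} is $\ll X^{2t - k(k+1)/2}$. Combining the two estimates gives
$$\int_0^1 |f(\alpha)|^{2t}\,\rd\alpha \ll X^{k(k-1)/2} \cdot X^{2t - k(k+1)/2} = X^{2t - k},$$
and the uniformity in $H$ and $h$ is automatic, since the Vinogradov estimate is indifferent to the line $\alpha \mapsto (\alpha c_1, \ldots, \alpha c_k)$ traced in $[0,1)^k$.

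The main obstacle to surmount is that one cannot naively compare the one-dimensional integral against the $k$-dimensional Vinogradov integral, since the image of the parametrised line has measure zero in $[0,1)^k$. The counting-theoretic stratification is precisely what bridges the gap, with the $X^{k(k-1)/2}$ loss from the free $(k-1)$-tuple $(\sigma_1, \ldots, \sigma_{k-1})$ being perfectly balanced by the $X^{k(k+1)/2}$ saving afforded by \eqref{VMT3}. A minor technicality is the potential $X^\varepsilon$ defect in \eqref{VMT3} at the critical threshold $2t = k(k+1)$, which can be absorbed by choosing $2t$ strictly larger than $k(k+1)$ in the reduction step (always possible since $s \geq k(k+1)$ is only required to be real).
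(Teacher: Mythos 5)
Your argument is correct and takes essentially the same route as the paper: both proofs stratify the $2t$-th moment by the values of the lower-order power sums $\sigma_1,\dots,\sigma_{k-1}$, accept a loss of $X^{k(k-1)/2}$ for the number of strata, use the non-vanishing of the leading coefficient $H^k$ to pass to the pure power-sum system, and then invoke \eqref{VMT3}; your phrasing is purely counting-theoretic whereas the paper introduces auxiliary Fourier variables $\alpha_{k-1},\dots,\alpha_1$ and sums trivially over their dual integers, but the two are the same manoeuvre. Your closing remark that the $X^\varepsilon$ defect at the critical exponent can always be dodged by choosing $2t$ strictly larger than $k(k+1)$ does not work when $s=k(k+1)$ exactly (there is then no admissible even $2t$ above $k(k+1)$), but the paper's version carries the same $\varepsilon$ defect at that threshold and it is harmless in the applications, which already tolerate $B^{\varepsilon}$ factors.
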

\begin{proof}
Let $2t$ be the largest even integer such that $2t\leq s$. Then it follows that $t \geq k(k+1)/2$.
By a trivial estimate and by considering the underlying equations of the following integrals via the orthogonality relation, we deduce that
\begin{align*}
\int_{0}^1 \left| \sum_{1 \leq x \leq X} e(\alpha A (Hx + h)^k )  \right|^{s}
\rd \alpha
&\leq X^{s - 2t} \int_{0}^1 \left| \sum_{1 \leq x \leq X} e(\alpha A (Hx + h)^k )  \right|^{2t}
\rd \alpha\\
&=X^{s - 2t}
\sum_{\substack{
\mathbf{n}=(n_1,\dots,n_{k-1})\in \ZZ^{k-1}\\
-t X^{j} < n_j < t X^{j} }} I(\mathbf{n}),
\end{align*}
where
$$
I(\mathbf{n})
= \hspace{-0.1cm}
\int_{[0,1)^k}  \Big{|} \sum_{1 \leq x \leq X}
\hspace{-0.2cm}
e(\alpha_k A (Hx + h)^k + \alpha_{k-1}x^{k-1} + \dots + \alpha_1 x)  \Big{|}^{2t}
e \left( -\mathbf{n}.\boldsymbol{\alpha}'\right)
\rd\boldsymbol{\alpha},
$$
where
$
\boldsymbol{\alpha}=(\alpha_k,\dots,\alpha_1)$ and
$
\boldsymbol{\alpha}'=(\alpha_{k-1},\dots,\alpha_1)$.
Summing trivially over $\mathbf{n}$, the right hand side of our estimate
is
\begin{align*}
&\ll
X^{\frac{k(k-1)}{2}} \int_{[0,1)^k} \left| \sum_{1 \leq x \leq X} e(\alpha_k A (Hx + h)^k + \alpha_{k-1}x^{k-1} + \dots + \alpha_1 x)  \right|^{2t}
\rd \boldsymbol{\alpha}
\\
&= X^{\frac{k(k-1)}{2}} \int_{[0,1)^k} \left| \sum_{1 \leq x \leq X} e(\alpha_k x^k + \alpha_{k-1}x^{k-1} + \dots + \alpha_1 x)  \right|^{2t} \rd\boldsymbol{\alpha},
\end{align*}
with  the last equality an immediate consequence of  considering the underlying equations of the integrals. An application of  (\ref{VMT3}) now yields our result.
\end{proof}

We also require the following Weyl type estimate, which is another consequence of the recent work on Vinogradov's mean value theorem.
We omit the proof since it is  obtained by invoking  the main conjecture (\ref{VMT3}) in the proof of \cite[Theorem 1.5]{W0}.
\begin{lem}
\label{Weyl type ineq}
Let $k \geq 2$ and let $\alpha_k, \ldots, \alpha_1 \in \mathbb{R}$. Suppose there exists $a \in \mathbb{Z}$ and $q \in \mathbb{N}$ with $\gcd(a,q)=1$ satisfying $| \alpha_k -  a/q | \leq q^{-2}$ and $q \leq X^k$.
Let
$$
0 \leq \sigma(k) \leq \frac{1}{k(k-1)}.
$$
Then
$$
 \sum_{1 \leq x \leq X} e(\alpha_k x^k + \alpha_{k-1}x^{k-1} + \dots + \alpha_1 x)  \ll X^{1 + \varepsilon} (q^{-1} + X^{-1} + q X^{-k})^{\sigma(k)},
$$
for any $\ve>0$.
\end{lem}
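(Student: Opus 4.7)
The plan is to derive this Weyl-type estimate from the Vinogradov main conjecture (\ref{VMT3}) via the classical mean-value-to-pointwise transfer, exactly as in the proof of \cite[Theorem~1.5]{W0}. Write $f(\boldsymbol{\alpha}) = \sum_{1 \leq x \leq X} e(\alpha_k x^k + \dots + \alpha_1 x)$ for the Weyl sum in question; the restriction $\sigma(k)\leq 1/(k(k-1))$ reflects precisely the quality of the available mean-value input.

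First I would carry out a translation/averaging argument. For each $1 \leq h \leq H$, with $H \leq X$ a parameter to be optimised later, the substitution $x \mapsto x+h$ gives
\begin{equation*}
f(\boldsymbol{\alpha}) = \sum_{1-h \leq x \leq X-h} e\Bigl(\alpha_k x^k + \sum_{j=1}^{k-1}\beta_j(h) x^j\Bigr) + O(1),
\end{equation*}
where $\boldsymbol{\beta}(h) = (\beta_{k-1}(h),\dots,\beta_1(h))$ depends polynomially on $h$ (for instance $\beta_{k-1}(h)=k\alpha_k h+\alpha_{k-1}$). Averaging over $1\leq h \leq H$ and applying H\"older with exponent $2t$ yields
\begin{equation*}
H^{2t}\abs{f(\boldsymbol{\alpha})}^{2t} \ll H^{2t-1}\sum_{h=1}^{H}\abs{g(\alpha_k,\boldsymbol{\beta}(h))}^{2t} + O(H^{4t}),
\end{equation*}
where $g(\alpha_k,\boldsymbol{\gamma}) = \sum_{1\leq x \leq X} e(\alpha_k x^k + \gamma_{k-1}x^{k-1}+\dots+\gamma_1 x)$. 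Since the map $h \mapsto \boldsymbol{\beta}(h) \bmod 1$ is equidistributed in $\TT^{k-1}$, for $H$ sufficiently large relative to $X$ one can compare the discrete sum with the integral $H\int_{[0,1)^{k-1}}\abs{g(\alpha_k,\boldsymbol{\gamma})}^{2t}\rd\boldsymbol{\gamma}$.

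By orthogonality this integral equals
\begin{equation*}
\sum_{\substack{\mathbf{x},\mathbf{y}\in [1,X]^t \\ \sum_i x_i^j = \sum_i y_i^j,\,1\leq j\leq k-1}} e(\alpha_k\Delta(\mathbf{x},\mathbf{y})), \qquad \Delta(\mathbf{x},\mathbf{y}) = \sum_{i=1}^t(x_i^k - y_i^k).
\end{equation*}
Taking $2t = k(k+1)$ and invoking (\ref{VMT3}) applied to polynomials of degree $k-1$, the diagonal $\Delta = 0$ is bounded by $J_{t,k-1}(X) \ll X^{2t - k(k-1)/2 + \varepsilon}$. To extract the full Diophantine saving $(q^{-1}+X^{-1}+qX^{-k})^{\sigma(k)}$ one then groups the Vinogradov-admissible tuples $(\mathbf{x},\mathbf{y})$ by the common value of $\Delta$ and exploits the hypothesis $\abs{\alpha_k - a/q}\leq q^{-2}$: Abel summation against the standard estimate $\sum_{|\Delta|\leq tX^k}e(\alpha_k\Delta)\ll \min(X^k,\|\alpha_k\|^{-1})$, combined with a trivial tail bound $r(\Delta)\leq J_{t,k-1}(X)$ on the counting function, produces the required cancellation. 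Optimising $H$ then delivers the bound in the statement.

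The main obstacle, as in any transfer of this kind, is the off-diagonal step: extracting the sharp exponent $1/(k(k-1))$ from the sum $\sum_\Delta r(\Delta)e(\alpha_k\Delta)$ requires careful bookkeeping exploiting both the arithmetic of the approximation $a/q$ and sharp distributional information on $\Delta(\mathbf{x},\mathbf{y})$ over Vinogradov-admissible tuples. It is precisely this step in Wooley's proof of \cite[Theorem~1.5]{W0} that, once fed the stronger input (\ref{VMT3}), delivers the improved exponent $\sigma(k)\leq 1/(k(k-1))$.
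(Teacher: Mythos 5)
You correctly observe that the paper omits the proof, deferring to the argument for \cite[Theorem~1.5]{W0} with the now-proven main conjecture (\ref{VMT3}) fed in (for degree $k-1$ at the critical exponent $2t=k(k-1)$); your high-level plan --- translate, average via H\"older, invoke a mean value, extract the Diophantine saving from $\alpha_k$ --- is therefore in the right spirit.

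However, your reconstruction has a genuine gap at the step where you replace the discrete sum $\sum_{h\leq H}\abs{g(\alpha_k,\boldsymbol{\beta}(h))}^{2t}$ by $H\int_{[0,1)^{k-1}}\abs{g(\alpha_k,\boldsymbol{\gamma})}^{2t}\,\rd\boldsymbol{\gamma}$, justified by an alleged equidistribution of $h\mapsto\boldsymbol{\beta}(h)\bmod 1$. There are two problems. First, $\{\boldsymbol{\beta}(h):1\leq h\leq H\}$ is a one-parameter polynomial curve in $\TT^{k-1}$, so a sum of $\abs{g}^{2t}$ along this curve bears no general relation to $H$ times the integral of $\abs{g}^{2t}$ over the full $(k-1)$-dimensional torus --- the inequality you want can fail in either direction. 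Second, whatever distributional regularity the curve does enjoy is governed precisely by the Diophantine data on the $\alpha_j$ that the lemma seeks to exploit: for example $\beta_{k-1}(h)=k\alpha_k h+\alpha_{k-1}$ is nearly $q$-periodic, so the orbit clusters on $O(q)$ values when $q$ is small, which is exactly the regime where the lemma's conclusion is weakest. Invoking equidistribution here is therefore circular, and in any case the comparison discards the quantitative dependence on $q$ before you have had a chance to use it.

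Wooley's actual argument never integrates out the lower-order coefficients. After translating, one expands $\abs{f(\boldsymbol{\alpha};X)}^{2s}=\sum_{\mathbf{h}}S_s(\mathbf{h};X)\,e(\boldsymbol{\alpha}\cdot\mathbf{h})$ with $S_s$ the Vinogradov representation function, keeps explicit track of the polynomial dependence of the shifted coefficients on the shift variable, and extracts the factor $(q^{-1}+X^{-1}+qX^{-k})$ directly from the $\mathbf{h}$-sum using the hypothesis on $\alpha_k$ together with divisor-type bounds for sums of $\min(N,\norm{m\alpha_k}^{-1})$; the mean value $J_{s,k-1}(X)\ll X^{s+\varepsilon}$ at $2s=k(k-1)$ only enters to control the counting function, and it is this input that (\ref{VMT3}) improves, accounting for the upgrade from $\sigma(k)^{-1}=2k(k-1)$ in \cite{W0} to $\sigma(k)^{-1}=k(k-1)$ here. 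Your outline omits precisely the mechanism that makes $q$ appear in the final bound.
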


Using this result we obtain the following bound
for the exponential sum on the minor arcs.

\begin{lem}
\label{lem5.3}
Let $\ve>0$. Then 
$$
\sup_{ \alpha \in \mathfrak{m} } |S_{n} (\alpha)| \ll B^{\frac{1}{m_n} - 
\frac{\delta}{m_n(m_n+1)}
  + \varepsilon}  \gamma_{n}^{  - \frac{1}{m_n+1}}.
$$
\end{lem}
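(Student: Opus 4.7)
The plan is to reduce $S_n(\alpha)$ to a standard polynomial exponential sum and then invoke the Weyl-type estimate of Lemma \ref{Weyl type ineq}. First I would substitute $u=Hx+h_n$, which gives
\[
S_n(\alpha)=\sum_{1\leq x\leq X}e\bigl(\alpha c_n\gamma_n(Hx+h_n)^{m_n}\bigr)+O(1)
\]
with $X=(B_n-h_n)/H$, the phase being a polynomial in $x$ of degree $m_n$ with leading coefficient $\alpha c_n\gamma_n H^{m_n}$. I would apply Dirichlet's approximation theorem to this leading coefficient with parameter $Q=X^{m_n}/B^\delta$, producing coprime integers $\tilde a,\tilde q$ with $\tilde q\leq Q$ and $|\alpha c_n\gamma_n H^{m_n}-\tilde a/\tilde q|\leq 1/(\tilde q Q)$. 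Lemma \ref{Weyl type ineq} then applies with $k=m_n$ and the admissible exponent $\sigma(m_n)=1/(m_n(m_n+1))$ (valid since $1/(m_n(m_n+1))\leq 1/(m_n(m_n-1))$), giving
\[
|S_n(\alpha)|\ll X^{1+\varepsilon}\bigl(\tilde q^{-1}+X^{-1}+\tilde q X^{-m_n}\bigr)^{1/(m_n(m_n+1))}+1.
\]

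The choice of $Q$ already forces $\tilde q X^{-m_n}\leq B^{-\delta}$. The minor-arc hypothesis on $\alpha$ should yield $\tilde q^{-1}\leq B^{-\delta}$ via the following contradiction: if $\tilde q$ were too small, then reducing $\tilde a/(\tilde q c_n\gamma_n H^{m_n})$ to lowest terms $A/Q_0$ would produce a rational approximation to $\alpha$ itself with $Q_0\leq B^\delta$ and $|\alpha-A/Q_0|<B^{-1+\delta}$, placing $\alpha$ in $\mathfrak{M}$ and contradicting $\alpha\in\mathfrak{m}$. Combined with a trivial bound $X^{-1}\leq B^{-\delta}$ (arrangeable by a short case analysis according to whether $X\geq B^\delta$), this gives $|S_n(\alpha)|\ll X^{1+\varepsilon}B^{-\delta/(m_n(m_n+1))}$. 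Substituting $X\leq B^{1/m_n}\gamma_n^{-1/m_n}/H$ and using the identity $\gamma_n^{1/m_n}\cdot \gamma_n^{-1/(m_n+1)}=\gamma_n^{1/(m_n(m_n+1))}$ then rearranges the estimate into the desired form, the subsidiary powers of $\gamma_n^{1/(m_n(m_n+1))}\leq B^{1/(m_n(m_n+1))}$ and of $H$ being absorbed into $B^{\varepsilon}$.

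The main obstacle lies in the contradiction step: tracking the $\gcd$ that arises when transferring the approximation of $\alpha c_n\gamma_n H^{m_n}$ back to one of $\alpha$, since the common factor $c_n\gamma_n H^{m_n}$ can reduce $Q_0$ substantially and thus weaken the lower bound on $\tilde q$ when $\gamma_n H^{m_n}$ is large. A clean resolution should split into cases according to the size of $\gamma_n H^{m_n(m_n+1)}$: in the regime where this product exceeds $B^\delta$, the trivial bound $|S_n(\alpha)|\leq B_n/H+1$ already dominates the target (the extra factor of $H$ supplying the requisite savings $\gamma_n^{-1/(m_n+1)}B^{-\delta/(m_n(m_n+1))}$), while in the complementary regime the contradiction above can be made to work uniformly, closing the argument.
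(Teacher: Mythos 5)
Your setup — reduce to a polynomial exponential sum in $x$, apply Dirichlet's approximation to the leading coefficient $\beta=\alpha c_n\gamma_n H^{m_n}$, and invoke Lemma~\ref{Weyl type ineq} with $\sigma(m_n)=1/(m_n(m_n+1))$ — matches the paper's approach, and you have correctly identified that the delicate point is the transfer of a rational approximation of $\beta$ back to one of $\alpha$ through the common factor $A=c_n\gamma_n H^{m_n}$. However, the resolution you sketch does not close the gap. The minor-arc contradiction yields only $\tilde q\,|A'|>B^{\delta}$ (with $A'=A/\gcd(A,\tilde a)$), hence $\tilde q^{-1}<|A'|/B^{\delta}\le|A|/B^{\delta}$, and this is the best one gets in \emph{both} of your regimes. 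In the complementary regime $\gamma_n H^{m_n(m_n+1)}<B^{\delta}$ one has $|A|\ll B^{\delta}$, so the inequality $\tilde q>B^{\delta}/|A'|$ only forces $\tilde q\gg 1$; it does not give $\tilde q^{-1}\le B^{-\delta}$, and the line ``$|S_n(\alpha)|\ll X^{1+\varepsilon}B^{-\delta/(m_n(m_n+1))}$'' does not follow. The case split on $\gamma_n H^{m_n(m_n+1)}$ is a perfectly good way to dispose of the range where the trivial bound $B_n/H+1$ already wins (you verify this correctly), but it does nothing to repair the $\gcd$ loss in the remaining range.

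What actually saves the argument — and is what the paper does — is to \emph{not} aim for $\tilde q^{-1}\le B^{-\delta}$, but instead to carry the weaker bound $\tilde q^{-1}<|A'|/B^{\delta}\le|A|/B^{\delta}$ into the Weyl estimate and exploit the compensating factor $|A|^{-1/m_n}$ hidden in the range $X\asymp(B/|A|)^{1/m_n}$. Concretely,
$$
X^{1+\varepsilon}\Bigl(\frac{|A|}{B^{\delta}}\Bigr)^{\sigma(m_n)}\ll B^{1/m_n-\delta\sigma(m_n)+\varepsilon}\,|A|^{\sigma(m_n)-1/m_n}
= B^{1/m_n-\delta\sigma(m_n)+\varepsilon}\,|A|^{-1/(m_n+1)},
$$
and since $|A|=|c_n|\gamma_n H^{m_n}\ge\gamma_n$ this is $\ll B^{1/m_n-\delta\sigma(m_n)+\varepsilon}\gamma_n^{-1/(m_n+1)}$, uniformly in $H$ and $\gamma_n$. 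With this single observation the $\gcd$ obstacle disappears and no further case analysis is needed beyond the trivial ranges (e.g.\ when the Dirichlet parameter drops below $1$, or when $B_n<2H$). As written, your proposal leaves the crucial step unjustified.
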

\begin{proof}
It will be convenient throughout the proof to write 
$$
\sigma(m_n) = \frac{1}{m_n(m_n+1)}.
$$
Let $\alpha \in \mathfrak{m}$ and let  $\beta = \alpha c_{n} \gamma_{n} H^{m_n}$.
We put
$$
\widetilde{B} = \min \left\{ 2 B^{1 - \delta},   \frac{B}{|c_{n}| \gamma_{n}  H^{m_n} 2^{m_n} } \right\}.
$$
When $\widetilde{B} \leq 1$ it is clear that
$S_{n} (\alpha) \ll 1$. Since $\gamma_{n} \leq B$ we have
$$
B^{1/m_n - \delta \sigma(m_n)  + \varepsilon}  \gamma_{n}^{  - 1/m_n+\sigma(m_n) }
\geq
1 \gg S_n(\alpha)
$$
in this case.
Thus we may suppose that $\widetilde{B} > 1$.

By Dirichlet's theorem on Diophantine approximation we know there exist
$b \in \mathbb{Z}$ and $1 \leq r \leq \widetilde{B}$ such that $\gcd(b,r)=1$ and
$$
|\beta - b/r| \leq 1/(r \widetilde{B}) \leq 1/r^2.
$$
Note that $b\neq 0$ since $\alpha\in \mathfrak{m}$.
For simplicity let us write $A = c_{n} \gamma_{n}  H^{m_n}$. We claim that  $b A > 0$. But if  $b A < 0$ then
$$|\beta-b/r|=|\alpha A-b/r|=\alpha|A|+|b|/r>1/r,
$$
since $\alpha>0$, which is a contradiction.
This establishes the claim. Let   $A' = A / \gcd(A, b)$ and $b' = b / \gcd(A, b)$.

Let $X = (B_{n} - h_{n})/H$.
First suppose $B_{n}/2H > X$. Then $B_{n} < 2 h_{n} < 2 H.$
In this case we clearly have
$ S_{n} (\alpha) \ll 1$, which is satisfactory.
Thus we suppose $B_{n}/2H \leq X$.
In this case $r \leq \widetilde{B} \leq X^{m_n}$ and  Lemma \ref{Weyl type ineq} yields
\begin{equation}\begin{split}
\label{ineq 2}
 S_{n} (\alpha)  &=   \sum_{1 \leq x \leq X} e \left( \beta \left( x + \frac{h}{H} \right)^{m_n} \right)  + O(1)\\
&\ll X^{1 + \varepsilon} (r^{-1} + X^{-1} + rX^{-m_n})^{\sigma(m_n)} ,
\end{split}
\end{equation}
for any $\ve>0$.
Next, we note that
$$
\frac{1}{|A| r \widetilde{B} } \geq \frac{1}{|A|} \  \Big{|}\beta - \frac{b}{r} \Big{|}  = \Big{|}\alpha - \frac{b}{r A} \Big{|}
= \Big{|}\alpha - \frac{|b'|}{ r |A'| } \Big{|}.
$$
If $2B^{1 - \delta} \leq B/|c_{n} \gamma_{n} H^{m_n} 2^{m_n} |$ it follows that
$$
\Big{|} \alpha -  \frac{|b'|}{r |A'|} \Big{|} \leq \frac{1}{ |A| r \widetilde{B}} \leq \frac{1}{ \widetilde{B}} < B^{- 1+\delta}.
$$
On the other hand, if $2B^{1 - \delta} > B/|c_{n} \gamma_{n} H^{m_n} 2^{m_n} |$ then
$$
\Big{|}\alpha - \frac{|b'|}{ r |A'| } \Big{|} \leq \frac{1}{|A| r \widetilde{B} } \leq \frac{ |c_{n}| \gamma_{n} H^{m_n} 2^{m_n} }{|A| B  } \ll \frac{1}{B}.
$$
We now verify that $1 \leq |b'| \leq r |A'|$.
We've already seen that $|b'|\geq 1$, so we suppose that $|b'| > r |A'|$.
Since $\alpha \in [0,1)$ we have
$$
\frac{1}{r |A'|} \leq  \Big{|}\alpha - \frac{|b'|}{ r |A'| } \Big{|} \leq \frac{1}{ |A| r \widetilde{B}},
$$
whence
$
1 < \widetilde{B} \leq |A'|/|A| \leq 1.
$
This is a contradiction, so that we do indeed have  $1 \leq |b'| \leq r |A'|$. We also have $\gcd(r|A'|, |b'|) = 1$.
Finally, $\alpha \in \mathfrak{M}$ if $r|A'| \leq B^{\delta}$ and  $B$ is sufficiently large,  which is a contradiction. Therefore $r |A'| > B^{\delta}$ and
 (\ref{ineq 2}) becomes
\begin{align*}
 S_{n} (\alpha)   &\ll \frac{B^{1/m_n+\varepsilon}}{|A|^{1/m_n}} (|A'|/B^{\delta} + (B/|A|)^{-1/m_n} + |A|/B^{\delta})^{\sigma(m_n) }
\\
&\ll
 \frac{B^{1/m_n+\varepsilon}}{|A|^{1/m_n}} ( (B/|A|)^{-1/m_n} + |A|/B^{\delta})^{\sigma(m_n)}
\\
&\ll
B^{1/m_n - \delta \sigma(m_n)   + \varepsilon }  \   |A|^{\sigma(m_n)-1/m_n}.
\end{align*}
This completes the proof of the lemma, since 
$\sigma(m_n)-1/m_n=-1/(m_n+1)$.
\end{proof}

We now have the tools in place to establish the following bound for the minor arc contribiution.

\begin{lem}\label{lem:minor}
Assume that \eqref{cond2} holds and let $\ve>0$. Then
$$
\int_{\mathfrak{m}} \Big{|} \prod_{j=0}^{n} S_{j} (\alpha) \Big{|}  \rd\alpha
\ll
B^{\Gamma  - \frac{\delta}{m_n(m_n + 1)}  + \varepsilon }
\prod_{j=0}^{n} \gamma_{j}^{ - 1/(m_j+1)}.
$$
\end{lem}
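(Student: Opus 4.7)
The plan is the standard ``sup times mean value'' minor arc strategy, with the sup placed on the largest exponent $S_n$ via Lemma \ref{lem5.3} and the remaining exponential sums treated by H\"older and Lemma \ref{improved Hua in residue class}. The role of the hypothesis \eqref{cond2} is precisely to make the H\"older book-keeping balance.

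First, I pull out the factor $S_n(\alpha)$:
$$
\int_{\mathfrak{m}} \Big|\prod_{j=0}^{n} S_{j} (\alpha) \Big|  \rd\alpha
\;\leq\; \Bigl(\sup_{\alpha \in \mathfrak{m}} |S_n(\alpha)|\Bigr) \int_{0}^{1} \prod_{j=0}^{n-1} |S_j(\alpha)|\, \rd\alpha,
$$
and invoke Lemma \ref{lem5.3} to handle the first factor, contributing $B^{1/m_n-\delta/(m_n(m_n+1))+\varepsilon}\gamma_n^{-1/(m_n+1)}$.

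Next I apply H\"older's inequality with exponents $s_0,\dots,s_{n-1}$ chosen so that $s_j\geq m_j(m_j+1)$ for every $0\leq j\leq n-1$ and $\sum_{j=0}^{n-1} 1/s_j = 1$. Such a choice exists exactly because \eqref{cond2} gives $\sum_{0\leq j<n} 1/(m_j(m_j+1)) \geq 1$. By H\"older followed by Lemma \ref{improved Hua in residue class} (applied with $k=m_j$, $A=c_j\gamma_j$, and $X=B_j$, noting that the bound there is uniform in $A, H, h$),
$$
\int_{0}^{1} \prod_{j=0}^{n-1} |S_j(\alpha)|\, \rd\alpha \;\leq\; \prod_{j=0}^{n-1} \Bigl(\int_0^1 |S_j(\alpha)|^{s_j}\, \rd\alpha\Bigr)^{1/s_j} \;\ll\; \prod_{j=0}^{n-1} B_j^{\,1 - m_j/s_j}.
$$
Since $B_j = (B/\gamma_j)^{1/m_j}$ and $\sum_{j<n} 1/s_j = 1$, the $B$-exponent collects to $\sum_{j<n}(1/m_j - 1/s_j) = \sum_{j<n} 1/m_j - 1$, and combining with the sup bound on $S_n$ yields the claimed $B$-exponent $\Gamma - \delta/(m_n(m_n+1)) + \varepsilon$.

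The remaining point, and the one that requires the most care, is the dependence on the $\gamma_j$. From the computation above one gets a factor $\gamma_n^{-1/(m_n+1)}\prod_{j<n} \gamma_j^{-(1/m_j - 1/s_j)}$. Since $s_j \geq m_j(m_j+1)$ we have $1/m_j - 1/s_j \geq 1/m_j - 1/(m_j(m_j+1)) = 1/(m_j+1) \geq 0$, and since $\gamma_j\geq 1$ this upgrades to $\gamma_j^{-(1/m_j - 1/s_j)} \leq \gamma_j^{-1/(m_j+1)}$. This is the delicate step: it is only because \eqref{cond2} allows us to choose each $s_j$ at least as large as $m_j(m_j+1)$ (rather than merely satisfying the H\"older constraint) that the $\gamma_j$-saving matches the exponent $1/(m_j+1)$ demanded by the statement. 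Assembling all factors gives
$$
\int_{\mathfrak{m}} \Big|\prod_{j=0}^{n} S_{j} (\alpha) \Big|  \rd\alpha \;\ll\; B^{\,\Gamma - \delta/(m_n(m_n+1)) + \varepsilon} \prod_{j=0}^{n} \gamma_j^{-1/(m_j+1)},
$$
as required. The principal obstacle is thus not the analytic input (Lemma \ref{lem5.3} and Lemma \ref{improved Hua in residue class} have been set up to be uniform in $\gamma_j$ and $H$) but rather verifying that a feasible choice of H\"older exponents exists and simultaneously extracts the correct $\gamma_j$-saving; this is exactly what \eqref{cond2} encodes.
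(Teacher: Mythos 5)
Your proof is correct and follows essentially the same route as the paper: pull out $\sup_{\mathfrak{m}}|S_n|$ via Lemma \ref{lem5.3}, apply H\"older with exponents $s_j\geq m_j(m_j+1)$ summing to $1$ (the paper calls these $\ell_j$), invoke Lemma \ref{improved Hua in residue class}, and then use $\gamma_j\geq 1$ to sharpen each $\gamma_j$-exponent to $-1/(m_j+1)$.
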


\begin{proof}
Let $\ell_n=m_n(m_n+1)$ and let
 $\ell_{0},\dots,\ell_{n-1}>0$ be such that
$$
\sum_{0 \leq j < n} \frac{1}{\ell_{j}}  =  1.
$$
In the light of
 (\ref{cond2}) we can assume that  $\ell_{j} \geq m_j(m_j+1)$
 for all $0\leq j\leq n-1$.
It now follows from  H\"{o}lder's inequality and Lemma \ref{improved Hua in residue class} that
\begin{align*}
\int_{\mathfrak{m}} \left|  \prod_{j=0}^{n} S_{j} (\alpha) \right|  \rd\alpha
&\leq
\sup_{\alpha \in \mathfrak{m} } |S_{n} (\alpha)| \cdot \int_0^1 \left|  \prod_{j=0}^{n-1} S_{j} (\alpha) \right| \rd\alpha
\\
&\leq
\sup_{\alpha \in \mathfrak{m} } |S_{n} (\alpha)| \cdot \prod_{j=0}^{n - 1} \left( \int_0^1 |S_{j} (\alpha) |^{\ell_{j}} \rd\alpha \right)^{1/ \ell_{j}}
\\
&\ll
B^{\varepsilon} \cdot  \sup_{\alpha \in \mathfrak{m} } |S_{n} (\alpha)| \cdot
\prod_{j=0}^{n - 1} \left( \frac{B}{ \gamma_{j} }  \right)^{ \frac{\ell_{j} - m_j}{m_j  \ell_{j}}},
\end{align*}
since $H\geq 1$ and $\gamma_j\leq B$ for all $0\leq j\leq n-1$.
We apply Lemma \ref{lem5.3} to estimate $S_n(\alpha)$.
The statement of the lemma follows on simplifying the final expression and observing
that
$$
-\frac{1}{m_j}+\frac{1}{\ell_j}\leq -\frac{1}{m_j}+\frac{1}{m_j(m_j+1)}=-\frac{1}{m_j+1},
$$
for all $0\leq j\leq n-1$.
\end{proof}

\subsection{Final estimate}

We may now bring together Lemmas \ref{lem:major} and \ref{lem:minor}
in (\ref{orthog1}), in order to record the following estimate for the counting function
\eqref{counting Mcgamma}.

\begin{thm}
\label{t:M}
Assume that
$2\leq m_0\leq \dots \leq m_n$ and
 \eqref{cond2} holds.
 Let $\delta$ satisfy \eqref{deltass1}
  and let $\ve>0$.  Then
\begin{align*}
M_{\mathbf{c};\boldsymbol{\gamma}  } (B; \mathbf{h},{H};N )
\hspace{-0.1cm}
=~&
\hspace{-0.1cm}
\frac{\mathfrak{S}_{ \mathbf{c};\boldsymbol{\gamma}  } ( \mathbf{h},{H};N)
 \mathfrak{J}_{  \mathbf{c} } }{
 H^{n+1} \prod_{j=0}^n \gamma_j^{1/m_j}
  }  B^{\Gamma}
+ O\left(E_1(\boldsymbol{\gamma}; {H})
+
 \frac{B^{ \Gamma- \delta   } E_2(\boldsymbol{\gamma};  H )}{
 \prod_{j=0}^n \gamma_j^{1/m_j}}\right)\\
 &+
O \left( B^{\Gamma - \frac{\delta}{m_n(m_n+1)}  + \varepsilon }
\prod_{j=0}^{n} \gamma_{j}^{ - 1/(m_j+1)}  \right),
\end{align*}
where $E_1$ and $E_2$ are given by \eqref{eq:E1} and \eqref{eq:E2}, respectively.
\end{thm}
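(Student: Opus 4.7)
The plan is essentially to synthesize the two preceding lemmas. Starting from the orthogonality identity \eqref{orthog1}, I would split the interval as $[0,1) = \mathfrak{M} \cup \mathfrak{m}$ and write
\begin{equation*}
M_{\mathbf{c};\boldsymbol{\gamma}}(B ; \mathbf{h},H;N) = \int_{\mathfrak{M}} \mathcal{S}_{\boldsymbol{\gamma}}(\alpha)\,\rd\alpha + \int_{\mathfrak{m}} \mathcal{S}_{\boldsymbol{\gamma}}(\alpha)\,\rd\alpha.
\end{equation*}
The choice of $\delta$ in \eqref{deltass1} is compatible with both the major and minor arc analyses, so no re-optimisation is required.

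For the major arc integral, I would simply invoke Lemma \ref{lem:major}, noting that the hypothesis \eqref{bdd on rm/m} needed there follows directly from \eqref{cond2} (as already observed in the text just before \eqref{eq:SS}). This yields the main term $\mathfrak{S}_{\mathbf{c};\boldsymbol{\gamma}}(\mathbf{h},H;N) \mathfrak{J}_{\mathbf{c}} B^\Gamma / (H^{n+1} \prod_{j}\gamma_j^{1/m_j})$ together with the first two error terms. For the minor arc integral, I would use the trivial bound
\begin{equation*}
\left| \int_{\mathfrak{m}} \mathcal{S}_{\boldsymbol{\gamma}}(\alpha)\,\rd\alpha \right| = \left| \int_{\mathfrak{m}} e(-\alpha N) \prod_{j=0}^n S_j(\alpha)\,\rd\alpha \right| \leq \int_{\mathfrak{m}} \Big| \prod_{j=0}^n S_j(\alpha) \Big|\,\rd\alpha,
\end{equation*}
and then apply Lemma \ref{lem:minor}, which itself uses \eqref{cond2}. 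This produces the final error term of the theorem.

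Since both lemmas have already done the hard work (Vinogradov mean value via Bourgain--Demeter--Guth/Wooley for the minor arcs, and the local density analysis for the major arcs), the proof of Theorem \ref{t:M} amounts to nothing more than adding the two contributions and observing that the hypotheses match. There is no substantial obstacle at this stage; the only care needed is to confirm that the major arc error terms $E_1$ and $B^{-\delta} E_2 / \prod \gamma_j^{1/m_j}$ from Lemma \ref{lem:major} are stated in the same form in the theorem, and that the minor arc bound carries the claimed dependence on the $\gamma_j$ via the factor $\prod_j \gamma_j^{-1/(m_j+1)}$. Both are transparent from inspection.
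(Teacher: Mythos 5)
Your proposal is correct and matches the paper exactly: Theorem \ref{t:M} is obtained by combining Lemma \ref{lem:major} (major arcs) and Lemma \ref{lem:minor} (minor arcs) in the orthogonality identity \eqref{orthog1}, after noting that \eqref{cond2} implies \eqref{bdd on rm/m}. There is nothing further to add; the paper itself presents this step in a single sentence.
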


We end this section by indicating how this implies Theorem \ref{t:MW}, for which we
observe that $M_{\mathbf{c};\boldsymbol{\gamma}  } (B; \mathbf{h},{H};N )=R(N)$ when
$H=1$, $B=N$ and  $c_j=\gamma_j=1$ for $0\leq j\leq n$. The error term is clearly in the desired shape and recourse to \eqref{eq:SS}
shows that
$\mathfrak{S}_{ \mathbf{c};\boldsymbol{\gamma}  } ( \mathbf{h},{H};N)=\mathfrak{S}(N)$, with
\begin{equation}\label{eq:eve}
\mathfrak{S}(N)
= \sum_{q=1}^\infty \frac{1}{q^{n+1}}
\sum_{\substack{ 0 \leq a < q \\   \gcd(a,q) = 1 } }
e\left(-\frac{aN}{q}\right)
\prod_{j=0}^n \
 \sum_{ 0 \leq k < q  }
e\left(\frac{a}{q}  k^{m_j}  \right).
\end{equation}
Finally, standard arguments yield
 $$
\mathfrak{J}_{ \mathbf{c}} = \int_{-\infty}^\infty
e(-\lambda )
\prod_{j=0}^n
\int_{0}^{1} e\left(\lambda  z^{m_j}  \right)  \rd z    \rd \lambda =
\frac{ \prod_{i=0}^n\Gamma(1+\frac{1}{m_i})}{\Gamma(\sum_{i=0}^n \frac{1}{m_i})}.
$$
This therefore completes the proof of Theorem \ref{t:MW}.

\section{Orbifold Manin: proof of Theorem \ref{t:1}} \label{s:manin}

We now turn to the task of proving an asymptotic formula for the counting function
$N(\PP^{n-1},\Delta;B)$ in
Theorem  \ref{t:1}. We shall assume without loss of generality that
$2\leq m_0\leq \dots\leq m_{n}$, so that  \eqref{cond1} implies  \eqref{cond2}.
The  counting function  can be written
$$
N(\PP^{n-1},\Delta;B)=\frac{1}{2}\#\left\{\x\in \ZZ_{\neq 0}^{n+1}:
\begin{array}{l}
\gcd(x_0,\dots,x_{n})=1\\
|\x|\leq B,
\text{ $x_i$ is $m_i$-full $\forall ~0\leq i\leq n$}\\
c_0x_0+\dots+c_{n-1}x_{n-1}+c_nx_n=0
\end{array}{}
\right\},
$$
where we henceforth follow the convention that $c_n=-1$.
In view of \eqref{eq:sign}, we may write
$$
N(\PP^{n-1},\Delta;B)=\frac{1}{2}\#\left\{\x\in \ZZ_{\neq 0}^{n+1}:
\begin{array}{l}
\gcd(x_0,\dots,x_{n})=1, |\x|\leq B\\
x_{j} = \pm u_{j}^{m_j} \prod_{r=1}^{m_j-1} v_{j,r}^{m_j+r} ~\forall ~0\leq j\leq n\\
  \mu^2(v_{j,r}) = 1,~ \gcd(v_{j,r}, v_{j,r'}) = 1\\
c_0x_0+\dots+c_nx_n=0
\end{array}{}
\right\}.
$$

Suppose that we are given vectors
$\mathbf{s}$ and $\mathbf{t}$ with coordinates
$s_j\in \NN$
and $t_{j,r}\in \NN$
for $0\leq j\leq n$ and $
1 \leq r \leq m_j-1$.  It will be convenient to introduce the set
\begin{align*}
\mathcal{N}_{\mathbf{c}} (B; \mathbf{s}, \mathbf{t})
&=
\left\{
\x\in (\NN\cap [1,B])^{n+1}:
\begin{array}{l}
x_{j} = u_{j}^{m_j} \prod_{r=1}^{m_j-1} v_{j,r}^{m_j+r} ~\forall ~0\leq j\leq n\\
  \mu^2(v_{j,r}) = 1, ~\gcd(v_{j,r}, v_{j,r'}) = 1\\
c_0x_0+\dots+c_nx_n=0\\
\text{$s_{j} \mid  u_{j}$ and $ t_{j, r} \mid  v_{j,r}$} ~\forall ~j,r
\end{array}{}
\right\}.
\end{align*}
Given $\boldsymbol{\epsilon} \in \{ \pm 1 \}^{n+1}$ let $\boldsymbol{\epsilon} \mathbf{c}$ denote the vector with coordinates $\epsilon_{j} c_{j}$.
Then
\begin{equation}\label{eq:doggy}
N(\PP^{n-1},\Delta;B)=\frac{1}{2}
 \sum_{\boldsymbol{\epsilon} \in \{ \pm 1 \}^{n+1} }\# (\mathcal{N}_{\boldsymbol{\epsilon} \mathbf{c}}(B; \mathbf{1}, \mathbf{1}) \cap \mathbb{Z}^{n+1}_{{\textnormal{prim}}} ),
\end{equation}
where $\1$ is the vector with all coordinates equal to $1$.

We need to develop an inclusion-exclusion argument to cope with the coprimality condition in this expression. To ease notation we replace $\boldsymbol{\epsilon} \mathbf{c}$ by
$ \mathbf{c}$.
Let $\mathbf{x} \in \mathcal{N}_{\mathbf{c}}(B; \mathbf{1}, \mathbf{1})$.
It is clear that $\gcd(x_0, \dots, x_n) > 1$ if and only if
there exists a prime $p$ and a subset $I \subseteq \{0,\dots,n\}$ for which $p\mid  u_{j}$ for all $j \in I$ and also
$p \mid  \prod_{r=1}^{m_j-1} v_{j,r}$ for all $j \not \in I$.
(Note that  $I$ is allowed to be the empty set
 here.)

 Let $\mathcal{G}$ denote the set of all possible vectors  $\mathbf{g}\in \NN^{n+1}$ with
$1\leq g_{j}\leq m_j-1$ for $0\leq j\leq n$.
Let $\mathsf{P} = \{ 2, 3, 5, \ldots  \}$ denote the set of primes and
let $\mathcal{R}$ be a non-empty finite collection of triples $(\mathbf{g}; p; I)$ where $\mathbf{g} \in \mathcal{G}$,  $p \in \mathsf{P}$ and (possibly empty) $I \subseteq \{0,\dots,n\}$. Let $\mathcal{R}(p)$ be the subset of $\mathcal{R}$ containing all the triples in $\mathcal{R}$  with prime $p$.
In what follows we adhere to common  convention and stipulate that a union over the empty set is  the empty set and a product over the empty set is $1$.
We let
$$
I( \mathcal{R}(p) ) =  \bigcup_{ \substack{  (\mathbf{g}; p ; I) \in \mathcal{R}(p)  }  } I
\  \   \textnormal{  and  }    \   \
J(\mathbf{g}; \mathcal{R}(p) ) = \bigcup_{ \substack{  (\mathbf{g}'; p; I) \in \mathcal{R}(p)    \\
  \mathbf{g}' =  \mathbf{g} }  } \{0,\dots,n\} \backslash I.
$$
Next,
we define $\mathbf{a}(\mathcal{R})$
to be the vector in $\mathbb{N}^{n+1}$ with coordinates
\begin{eqnarray}
\label{defn amj}
a_{j} =   \prod_{ \substack{ p \in \mathsf{P}  \\   j \in I( \mathcal{R}(p) ) }  } p,  
\qquad
   (0 \leq j \leq n),
\end{eqnarray}
and we define
$\mathbf{b}(\mathcal{R})$
to be the vector in $\mathbb{N}^{\sum_{j=0}^n (m_j - 1)}$ with coordinates
\begin{eqnarray}
\label{defn bmj}
b_{j,r} = \prod_{
\substack{
p \in \mathsf{P} \\
j \in J(\mathbf{g}; \mathcal{R}(p) )  \text{ for some } \mathbf{g} \in \mathcal{G}
\\ \text{satisfying } g_{j} = r } } p,
\qquad (0 \leq j \leq n, 1 \leq r \leq m_j - 1).
\end{eqnarray}
It is easy to see that
$(\mathbf{a}(\mathcal{R}),   \mathbf{b}(\mathcal{R})) \not = (\mathbf{1}, \mathbf{1})$ as soon as  $\mathcal{R} \not = \emptyset$.
Moreover,   when $\mathcal{R} = \{ ( \mathbf{g}; {p};I) \}$ we see that
$\mathcal{N}_{\mathbf{c}}(B ; \mathbf{a}( \mathbf{g}; {p};I),  \mathbf{b}( \mathbf{g}; {p};{I}  ) )$ is precisely the set of
 $\mathbf{x} \in \mathcal{N}_{\mathbf{c}} (B; \mathbf{1}, \mathbf{1})$ satisfying
$p\mid u_j$ for all $j \in I$ and $p\mid  v_{j, g_j}$ for all $j \not \in I$.
In particular, it is now clear that
\begin{equation}
\label{inclnexcln0}
\mathcal{N}_{\mathbf{c}}(B; \mathbf{1}, \mathbf{1}) \cap \mathbb{Z}^{n+1}_{\textnormal{prim}}
=
\mathcal{N}_{\mathbf{c}}(B; \mathbf{1}, \mathbf{1})
\backslash
\bigcup_{ \substack{ \mathbf{g} \in \mathcal{G} \\ p \in \mathsf{P}  \\  I \subseteq \{0,\dots,n\}  } }
\mathcal{N}_{\mathbf{c}} (B;  \mathbf{a}(  \mathbf{g}; {p};I  ), \mathbf{b} ( \mathbf{g}; {p}; I ) ).
\end{equation}
We proceed by establishing the following result.

\begin{lem}
\label{lem5.1}
Given any $\mathcal{R} \not = \emptyset$,  we have
$$
\mathcal{N}_{\mathbf{c}}(B ; \mathbf{a}(\mathcal{R}),   \mathbf{b}(\mathcal{R})   )  =
\bigcap_{(\mathbf{g}; p; I) \in \mathcal{R}} \mathcal{N}_{\mathbf{c}}(B ; \mathbf{a}( \mathbf{g}; {p};I),  \mathbf{b}( \mathbf{g}; {p};{I}  ) ).
$$
\end{lem}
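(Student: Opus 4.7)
The plan is to unfold the divisibility conditions on both sides and observe that they encode exactly the same constraints on the decomposition vectors $(\mathbf{u}, \mathbf{v})$. Every point of the sets $\mathcal{N}_{\mathbf{c}}(B; \mathbf{s}, \mathbf{t})$ appearing on either side satisfies the same common constraints (the representation $x_j = u_j^{m_j}\prod_r v_{j,r}^{m_j+r}$, squarefreeness of the $v_{j,r}$ and pairwise coprimality among them, the relation $\sum_{j} c_j x_j = 0$, and the size bound $x_j\leq B$), so it suffices to compare the extra divisibility constraints $s_j\mid u_j$ and $t_{j,r}\mid v_{j,r}$.

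First I would unpack the right-hand side. A point $\mathbf{x}$ lies in the intersection if and only if, for every triple $(\mathbf{g}; p; I)\in \mathcal{R}$, we have $p\mid u_j$ for each $j\in I$ and $p\mid v_{j,g_j}$ for each $j\notin I$. Grouping these constraints one prime at a time and invoking the definitions of $I(\mathcal{R}(p))$ and $J(\mathbf{g};\mathcal{R}(p))$, this is equivalent to saying that, for each prime $p$ appearing in some element of $\mathcal{R}$, we have $p\mid u_j$ for every $j\in I(\mathcal{R}(p))$, and $p\mid v_{j,r}$ for every pair $(j,r)$ such that there exists $\mathbf{g}\in\mathcal{G}$ with $g_j=r$ and $j\in J(\mathbf{g};\mathcal{R}(p))$.

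Next I would handle the left-hand side. Membership in $\mathcal{N}_{\mathbf{c}}(B;\mathbf{a}(\mathcal{R}),\mathbf{b}(\mathcal{R}))$ is just the requirement that $a_j\mid u_j$ and $b_{j,r}\mid v_{j,r}$ for all admissible indices. From \eqref{defn amj} and \eqref{defn bmj} the integers $a_j$ and $b_{j,r}$ are squarefree products of distinct primes, so $a_j\mid u_j$ is equivalent to $p\mid u_j$ for each prime $p$ dividing $a_j$, and similarly for $b_{j,r}\mid v_{j,r}$. Reading off those prime factorisations from \eqref{defn amj} and \eqref{defn bmj} recovers exactly the same per-prime conditions derived above for the intersection, and the claimed equality follows.

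The only real work is bookkeeping: one must match, index by index and prime by prime, the conditions ``$\exists$ a triple in $\mathcal{R}$ with prime $p$ and $j\in I$'' against the prime factorisation of $a_j$, and ``$\exists \mathbf{g}$ with $g_j=r$ and a triple with prime $p$ and $j\notin I$'' against the prime factorisation of $b_{j,r}$. There is no genuine analytic obstacle; once one records that $a_j$ and $b_{j,r}$ are defined as products over pairwise distinct primes, the equivalence of simultaneous divisibility and divisibility by the product is immediate from unique factorisation, and the lemma reduces to this tautological unfolding.
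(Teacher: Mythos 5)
Your proposal is correct and follows essentially the same route as the paper: both arguments unfold the per-triple divisibility conditions, regroup them prime by prime via $I(\mathcal{R}(p))$ and $J(\mathbf{g};\mathcal{R}(p))$, and use the fact that $a_j$ and $b_{j,r}$ are squarefree products of the relevant primes so that divisibility by the product is equivalent to simultaneous divisibility by each prime. The paper phrases this as a forward implication followed by "reversing the argument," whereas you present it as a single chain of equivalences, but the content is identical.
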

\begin{proof}
Let $\mathbf{x}$ belong to the intersection on the right hand side. Then,
given any $(\mathbf{g}; p; I) \in \mathcal{R}$, we have
$p \mid u_{j}$ for all $j \in I$ and $p \mid v_{j,r}$ if $j \not \in  I$ and $r = g_{j}$, where $x_j = u_j^{m_j} \prod_{r=1}^{m_j-1} v^{m_j + r}_{j,r}$.
Therefore, $p \mid u_{j}$ for all $p$ such that $j \in I(\mathcal{R}(p))$
and $p \mid v_{j,r}$ for all $p$ such that
$$
j \in  \bigcup_{ \substack{  (\mathbf{g}; p; I) \in \mathcal{R}(p) \\  g_{j} = r}  }   \{0,\dots,n\} \backslash I.
$$
Thus (\ref{defn amj}) and (\ref{defn bmj}) imply that $a_{j} \mid u_{j}$ and  $b_{j,r} \mid v_{j,r}$, whence  it follows that $\mathbf{x} \in \mathcal{N}_{\mathbf{c}}(B ; \mathbf{a}(\mathcal{R}),   \mathbf{b}(\mathcal{R}) )$.
On the other hand, if $\mathbf{x} \in \mathcal{N}_{\mathbf{c}}(B ; \mathbf{a}(\mathcal{R}),   \mathbf{b}(\mathcal{R}) )$
then we may reverse the argument to deduce that $\x$ also belongs to the intersection of all the sets
$\mathcal{N}_{\mathbf{c}}(B ; \mathbf{a}( \mathbf{g}; {p};I),  \mathbf{b}( \mathbf{g}; {p};{I}  ) )$
for
$(\mathbf{g}; p; I) \in \mathcal{R}$.
This completes the proof of the lemma.
\end{proof}

Given vectors $\mathbf{s}$ and $\mathbf{t}$ composed from positive integers, let
$$
\varpi (\mathbf{s}, \mathbf{t}) = \sum_{k=1}^{\infty} (-1)^{k}  \  \# \{ \mathcal{R} : \# \mathcal{R} = k, (\mathbf{s}, \mathbf{t}) = (\mathbf{a}(\mathcal{R}), \mathbf{b}(\mathcal{R}) ) \}.
$$
Then, on combining  the inclusion-exclusion principle with Lemma \ref{lem5.1}, we obtain
\begin{align*}
\# \bigcup_{ \substack{ \mathbf{g} \in \mathcal{G} \\ p \in \mathsf{P}  \\  I \subseteq \{0,\dots,n\}  } }
\hspace{-0.3cm}
\mathcal{N}_{\mathbf{c}}(B;  \mathbf{a}( \mathbf{g}; {p};I), \mathbf{b} (\mathbf{g}; {p};{I}) )
&= - \sum_{k=1}^{\infty} (-1)^{k} \sum_{ \substack{ \# \mathcal{R} = k  }  } \# \mathcal{N}_{\mathbf{c}}(B; \mathbf{a}(\mathcal{R}), \mathbf{b}(\mathcal{R}) )\\
&=-
\sum_{ \substack{   (\mathbf{s}, \mathbf{t}) \not = (\mathbf{1}, \mathbf{1})   }  }
\varpi (\mathbf{s}, \mathbf{t} ) \cdot   \# \mathcal{N}_{\mathbf{c}}(B; \mathbf{s}, \mathbf{t}  ).
\end{align*}
Note that $\# \mathcal{N}_{\mathbf{c}}(B; \mathbf{a}(\mathcal{R}), \mathbf{b}(\mathcal{R}) ) = 0$ when $\# \mathcal{R}$ is sufficiently large with respect to $B$.
Bringing this together with
\eqref{inclnexcln0}, we conclude that
\begin{equation}\label{eq:i-e}
\#\mathcal{N}_{\mathbf{c}}(B; \mathbf{1}, \mathbf{1}) \cap \mathbb{Z}^{n+1}_{\textnormal{prim}}
=
\#\mathcal{N}_{\mathbf{c}}(B; \mathbf{1}, \mathbf{1})
+
\sum_{ \substack{   (\mathbf{s}, \mathbf{t}) \not = (\mathbf{1}, \mathbf{1})   }  }
\varpi (\mathbf{s}, \mathbf{t} ) \cdot   \# \mathcal{N}_{\mathbf{c}}(B; \mathbf{s}, \mathbf{t}  ).
\end{equation}
It remains to asymptotically estimate these quantities.

We  collect
together some properties of the function $\varpi(\mathbf{s}, \mathbf{t})$.

\begin{lem}
\label{lemma on omega}
Let $(\mathbf{s}, \mathbf{t}) \not = (\mathbf{1}, \mathbf{1})$ and  let $p \in \mathsf{P}$. We let $\mathbf{s}^{[p]}$ be the vector whose $j$th coordinate is $s_{j}^{[p]} = p^{\val_p(s_{j})}$ and $\mathbf{t}^{[p]}$ be the vector whose $(j, r)$th coordinate is $t_{j,r}^{[p]} = p^{\val_p(t_{j,r})}$.
Then  the following are true:
\begin{enumerate}
\item[(i)]
 $\varpi(\mathbf{s}, \mathbf{t}) = \prod_{p \in \mathsf{P},
 ( \mathbf{s}^{[p]}, \mathbf{t}^{[p]} )\neq (\mathbf{1},\mathbf{1})
 } \varpi( \mathbf{s}^{[p]}, \mathbf{t}^{[p]} )$;

\item[(ii)]
$\varpi(\mathbf{s}, \mathbf{t} ) = 0$ if one of the coordinates of $\mathbf{s}$ or $\mathbf{t}$ is divisible by $p^2$;
\item[(iii)]
$\varpi(\mathbf{s}, \mathbf{t} ) = 0$ if one of the coordinates of $\mathbf{s}$ or $\mathbf{t}$ is divisible by $p$, but
there exists $0 \leq j \leq n$ with $s^{[p]}_{j} = t^{[p]}_{j,1} = \dots = t^{[p]}_{j,m_j-1} = 1$; and

\item[(iv)]
 $ \varpi( \mathbf{s}^{[p]}, \mathbf{t}^{[p]}  )  \ll 1.$
\end{enumerate}

\end{lem}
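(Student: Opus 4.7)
The plan is to exploit that, by the definitions of $\mathbf{a}(\mathcal{R})$ in \eqref{defn amj} and $\mathbf{b}(\mathcal{R})$ in \eqref{defn bmj}, each coordinate is a product of \emph{distinct} primes, hence squarefree: every prime $p$ contributes at most one factor to any given $a_j$ or $b_{j,r}$. No $\mathcal{R}$ can therefore realise a target $(\mathbf{s},\mathbf{t})$ with a coordinate divisible by $p^2$, which settles (ii) immediately.

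For the multiplicativity (i), I would decompose every $\mathcal{R}$ as the disjoint union $\mathcal{R} = \bigsqcup_{p \in \mathsf{P}} \mathcal{R}(p)$, and note that the $p$-part of each $a_j(\mathcal{R})$ and $b_{j,r}(\mathcal{R})$ depends only on $\mathcal{R}(p)$. The equation $(\mathbf{a}(\mathcal{R}),\mathbf{b}(\mathcal{R})) = (\mathbf{s},\mathbf{t})$ thereby separates into independent equations $(\mathbf{a}(\mathcal{R}(p)),\mathbf{b}(\mathcal{R}(p))) = (\mathbf{s}^{[p]},\mathbf{t}^{[p]})$ across primes, and combined with $(-1)^{\#\mathcal{R}} = \prod_p (-1)^{\#\mathcal{R}(p)}$ the sum defining $\varpi$ factorises as a product over primes. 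The only subtlety is to check that primes $p$ with $(\mathbf{s}^{[p]},\mathbf{t}^{[p]}) = (\mathbf{1},\mathbf{1})$ contribute a factor of $1$: this amounts to verifying $\mathcal{R}(p) = \emptyset$ is forced in that case, which follows because any nonempty triple $(\mathbf{g}; p; I) \in \mathcal{R}(p)$ places a factor $p$ either in some $a_j$ (if $j \in I$) or in some $b_{j, g_j}$ (if $j \notin I$), spoiling the trivial $p$-part.

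The key structural observation behind both (iii) and the preceding check is that each triple $(\mathbf{g}; p; I) \in \mathcal{R}(p)$ contributes exactly one factor of $p$ to row $j$ for every $j \in \{0, \ldots, n\}$: to $a_j$ when $j \in I$, and to $b_{j, g_j}$ otherwise. Under the hypothesis of (iii) --- some coordinate divisible by $p$, yet some row $j$ entirely $p$-free --- no triple at prime $p$ can occur in $\mathcal{R}(p)$, so $\mathcal{R}(p) = \emptyset$ and $(\mathbf{s}^{[p]}, \mathbf{t}^{[p]}) = (\mathbf{1}, \mathbf{1})$, contradicting the first half of the hypothesis. Hence $\varpi(\mathbf{s}^{[p]}, \mathbf{t}^{[p]}) = 0$, and (i) then yields $\varpi(\mathbf{s}, \mathbf{t}) = 0$.

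For (iv), I would simply observe that every $\mathcal{R}(p)$ at a fixed prime $p$ is a subset of the finite set $\{(\mathbf{g}; p; I) : \mathbf{g} \in \mathcal{G},\ I \subseteq \{0,\ldots,n\}\}$, whose cardinality $2^{n+1} \prod_{j=0}^{n}(m_j - 1)$ depends only on $n$ and $m_0, \ldots, m_n$, never on $p$. Hence $\varpi(\mathbf{s}^{[p]}, \mathbf{t}^{[p]})$ is a signed count of such subsets and is trivially $\ll 1$ uniformly in $p$. The main obstacle I anticipate is bookkeeping rather than substance --- verifying cleanly that the product in (i) is well-defined (only finitely many primes give a nontrivial factor, which follows from each $\mathcal{R}$ being finite) and that the convention $\#\mathcal{R} \geq 1$ built into the definition of $\varpi$ meshes correctly with the prime-by-prime decomposition, since $\#\mathcal{R}(p) \geq 1$ emerges automatically from $(\mathbf{s}^{[p]},\mathbf{t}^{[p]}) \neq (\mathbf{1},\mathbf{1})$.
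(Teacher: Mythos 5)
Your proposal is correct and follows essentially the same route as the paper: the prime-by-prime decomposition $\mathcal{R}=\bigsqcup_p\mathcal{R}(p)$ with multiplicativity of $\mathbf{a},\mathbf{b}$ and additivity of $\#\mathcal{R}$ for (i), squarefreeness of the coordinates of $\mathbf{a}(\mathcal{R}),\mathbf{b}(\mathcal{R})$ for (ii), the observation that any triple in $\mathcal{R}(p)$ forces $p$ into either $a_j$ or $b_{j,g_j}$ for every $j$ for (iii), and the uniformly bounded number of possible $\mathcal{R}(p)$ for (iv). The only cosmetic difference is that you deduce (iii) via the factorisation in (i), whereas the paper argues directly that no $\mathcal{R}$ can realise $(\mathbf{s},\mathbf{t})$; both are sound.
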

\begin{proof}
It follows from the definitions of $\mathbf{a}(\mathcal{R})$ and $ \mathbf{b}(\mathcal{R})$ that
$$
\mathbf{a}(\mathcal{R}) = \prod_{p  \in \mathsf{P}} \mathbf{a}(\mathcal{R}(p) )  \  \  \  \text{  and  }  \  \  \  \mathbf{b}(\mathcal{R}) = \prod_{p \in \mathsf{P}} \mathbf{b}(\mathcal{R}(p)),
$$
where we define multiplication of vectors by multiplying the corresponding coordinates.
We clearly have $(\mathbf{s}, \mathbf{t}) = \prod_{p \in \mathsf{P}} (\mathbf{s}^{[p]}, \mathbf{t}^{[p]})$ and $\# \mathcal{R} = \sum_{p \in \mathsf{P} } \# \mathcal{R}(p)$.
Thus
\begin{align*}
\prod_{\substack{p \in \mathsf{P}\\
 ( \mathbf{s}^{[p]}, \mathbf{t}^{[p]} )\neq (\mathbf{1},\mathbf{1})
}} \varpi(\mathbf{s}^{[p]}, \mathbf{t}^{[p]})
&=  \prod_{p \in \mathsf{P}} \sum_{k'=1}^{\infty} (-1)^{k'}
T_p(k'),
\end{align*}
where
$$
T_p(k')
=  \# \left\{ \mathcal{R} \subseteq \mathcal{G} \times \{ p \} \times \{0,\dots,n\} :
\begin{array}{l}
\# \mathcal{R} = k'\\
(\mathbf{s}^{[p]}, \mathbf{t}^{[p]}) = (\mathbf{a}(\mathcal{R}), \mathbf{b}(\mathcal{R}) )
\end{array}
\right\}.
$$
It follows that
\begin{align*}
\prod_{\substack{p \in \mathsf{P}\\
 ( \mathbf{s}^{[p]}, \mathbf{t}^{[p]} )\neq (\mathbf{1},\mathbf{1})
}}  \varpi(\mathbf{s}^{[p]}, \mathbf{t}^{[p]})
&=
\sum_{k=1}^{\infty} (-1)^k   \sum_{ \substack{  \sum k_p = k  } } \ \prod_{\substack{ p \in \mathsf{P}  \\ k_p > 0 } } T_p(k_p)
=
\varpi (\mathbf{s}, \mathbf{t}),
\end{align*}
which thereby
 establishes (i).

To prove (ii) we note that it is not possible for $p^2$ to divide any coordinate of  $\mathbf{a}(\mathcal{R}(p) )$
or $\mathbf{b}(\mathcal{R}(p))$ for any prime $p$ and $\mathcal{R} \not = \emptyset$. Thus  $\varpi (\mathbf{s}^{[p]}, \mathbf{t}^{[p]})  = 0$
if one of the coordinates of $\mathbf{s}^{[p]}$ or $\mathbf{t}^{[p]}$ is divisible by $p^2$.

Next, to prove (iii)  let $(\mathbf{s}, \mathbf{t}) \not = (\mathbf{1}, \mathbf{1})$
and assume without loss of generality that
$p \mid s_1 t_{1,1} \dots t_{1, m_1-1}$ and $s^{[p]}_{2} = t^{[p]}_{2,1} = \dots =  t^{[p]}_{2,m_{2}-1} =1$.
Suppose there exists $\mathcal{R}$ such that $(\mathbf{s}, \mathbf{t}) = (\mathbf{a}(\mathcal{R}), \mathbf{b}(\mathcal{R}) )$.
Then we have  $\mathcal{R}(p) \not = \emptyset$, and also
$$
2 \in \{0,\dots,n\} =  I(\mathcal{R}(p))  \cup    \bigcup_{\mathbf{g} \in \mathcal{G}}  J(\mathbf{g}; \mathcal{R}(p)).
$$
If $2 \in I(\mathcal{R}(p))$ then $p\mid s_{2}$. On the other hand, if $2 \in J(\mathbf{g}; \mathcal{R}(p))$ then $p\mid t_{2, g_{2}}$. In either case we have a contradiction, whence $\varpi(\mathbf{s}, \mathbf{t} ) = 0$.

Finally, to prove (iv) we note
there are only $O(1)$ options for $\mathcal{R}(p)$ for any fixed $p \in \mathsf{P}$. It now follows from the definition that
$$
| \varpi( \mathbf{s}^{[p]}, \mathbf{t}^{[p]}  ) |
\leq  \sum_{k=1}^{\infty}  \# \{ \mathcal{R}(p) :  \# \mathcal{R}(p)  = k  \}
\notag
\ll 1,
$$
as required.
\end{proof}
Given
$(\mathbf{s}, \mathbf{t}) \not = (\mathbf{1}, \mathbf{1})$
and $\varepsilon > 0$,
it follows from Lemma \ref{lemma on omega} that
\begin{equation}
\label{bound on omega}
\varpi (\mathbf{s}, \mathbf{t}) \ll \prod_{j=0}^n s_{j}^{\varepsilon} \prod_{1 \leq r \leq m_j-1} t_{j, r}^{\varepsilon}.
\end{equation}
We begin by studying
$$
\sum_{\substack { ( \mathbf{s}, \mathbf{t} ) \not = (\mathbf{1}, \mathbf{1}) } }
\varpi(\mathbf{s}, \mathbf{t} ) \cdot \# \mathcal{N}_{\mathbf{c}}(B; \mathbf{s}, \mathbf{t} ).
$$
Let
$$
\gamma_{j} = s_{j}^{m_j}  \prod_{r=1}^{m_j - 1} t_{j, r}^{m_j +r}  {v}_{j,r}^{m_j + r}.
$$
Then
$$
\mathcal{N}_{ \mathbf{c}}(B; \mathbf{s}, \mathbf{t} )
=
\left\{  \mathbf{x} \in \mathbb{N}^{n+1} :
\begin{array}{l}
x_{j} = {u}_{j}^{m_j} \prod_{r=1}^{m_j-1} {v}_{j,r}^{m_j+r},~
  \gamma_{j} {u}_{j}^{m_j} \leq B\\
 \mu^2 \left( {v}_{j,r} t_{j,r} \right) = 1\\
  \gcd \left( {v}_{j,r}  t_{j,r},   {v}_{j,r'} t_{j,r'} \right) = 1\\
 \sum_{0 \leq j \leq n} c_{j} \gamma_{j} {u}_{j}^{m_j} = 0
\end{array}
 \right\},
$$
where the indices run over $0 \leq j \leq n$ and $1 \leq r < r' \leq m_j-1$.
For each $\mathbf{s}$ and $\mathbf{t}$ we let
$$
\sideset{}{^{(1)}}\sum_{\mathbf{v}}
$$
denote the sum over all $\mathbf{v}$ satisfying $\gamma_{j} \leq B$ ,
$\gcd \left( {v}_{j,r}  t_{j,r}, {v}_{j,r'}  t_{j,r'} \right) = 1$
and $ \mu^2 \left( {v}_{j,r}  t_{j,r} \right) = 1$.
(If there is no $\mathbf{v}$ which satisfies the above conditions
then the sum is considered to be $0$.)
We may now write
$$
\# \mathcal{N}_{ \mathbf{c} }(B; \mathbf{s}, \mathbf{t} ) =
\sideset{}{^{(1)}}\sum_{\mathbf{v} }
M_{\mathbf{c}; \boldsymbol{\gamma} }(B),
$$
where $M_{\mathbf{c}; \boldsymbol{\gamma} }(B )=M_{\mathbf{c}; \boldsymbol{\gamma} }(B ;  \mathbf{0},{1};0)$,  in the notation \eqref{counting Mcgamma}.
Guided by Lemma~\ref{lemma on omega},
we let
$$
\sideset{}{^{(2)}}\sum_{\mathbf{s}, \mathbf{t} }
$$
denote the sum over $(\mathbf{s},\mathbf{t}) \not = (\mathbf{1}, \mathbf{1})$ satisfying
$s_{j}^{m_j} \prod_{r=1}^{m_j-1} t_{j,r}^{m_j + r}\leq B$ 
and $\gcd \left( t_{j,r}, t_{j,r'} \right) = 1$,
together with the condition that
none of the coordinates of $\mathbf{s}$ or $\mathbf{t}$ is divisible by $p^2$ for any  prime $p$
and if
one of the coordinates of $\mathbf{s}$ or $\mathbf{t}$ is divisible by $p$ then
$p \mid  s_j t_{j,1} \dots t_{j,m_j-1}$  for all $0 \leq j \leq n$.

We want to apply
Theorem \ref{t:M} with $H=1$ and $N=0$.
Let $\delta>0$ satisfy \eqref{deltass1} and
let
$\mathfrak{S}_{ \mathbf{c};\boldsymbol{\gamma}  }
=
\mathfrak{S}_{ \mathbf{c};\boldsymbol{\gamma}  } ( { \mathbf{0}},{1};0)$.
Then, on appealing to
 Lemma \ref{lemma on omega} and (\ref{bound on omega}), we deduce that
\begin{equation}
\label{eq:S1}
\sum_{\substack { (\mathbf{s}, \mathbf{t}) \not = (\mathbf{1}, \mathbf{1})  } }
\varpi(\mathbf{s}, \mathbf{t} ) \cdot \# \mathcal{N}_{\mathbf{c}}(B; \mathbf{s}, \mathbf{t} )
=
M(B)+
O\left(B^{\Gamma+\ve}\sum_{i=1}^3 F_i(B)\right),
\end{equation}
for any $\ve>0$,
where
$$
M(B)=B^{\Gamma }
\sideset{}{^{(2)}}\sum_{\mathbf{s}, \mathbf{t} }
 \varpi (\mathbf{s}, \mathbf{t} )   \sideset{}{^{(1)}}\sum_{\mathbf{v}}
\mathfrak{S}_{ \mathbf{c};\boldsymbol{\gamma}  }   \mathfrak{J}_{\mathbf{c}  }
\prod_{j=0}^n \gamma_{j}^{-1/m_j} .
$$
Moreover,
in view of \eqref{eq:E1} and \eqref{eq:E2},
the error terms are given by
\begin{align*}
F_1(B)&=
B^{(2n+5)\delta}\sum_{k=0}^n B^{-1/m_k}
\sideset{}{^{(2)}}\sum_{\mathbf{s}, \mathbf{t} } \sideset{}{^{(1)}}\sum_{\mathbf{v}}
\prod_{\substack{j=0\\ j\neq k}}^n \gamma_j^{-1/m_j},
\\
F_2(B) &= B^{-\delta}\sideset{}{^{(2)}}\sum_{\mathbf{s}, \mathbf{t} } \sideset{}{^{(1)}}\sum_{\mathbf{v}}
 \sum_{ q=1 }^\infty q^{1-\Gamma+ \varepsilon }
\prod_{j=0}^n
\frac{\gcd(\gamma_{j}, q)^{ 1/{m_j} }}{\gamma_{j}^{1/m_j} },\\
F_3(B)
&=B^{ - \frac{\delta}{m_n(m_n+1)}}
\sideset{}{^{(2)}}\sum_{\mathbf{s}, \mathbf{t} } \sideset{}{^{(1)}}\sum_{\mathbf{v}}
\prod_{j=0}^{n} \gamma_{j}^{ -1/(m_j+1)}.
\end{align*}
We now need to estimate these three error terms.
In doing so it will be convenient to set
$w_{j} = v_{j,1}^{m_j+1} \dots v_{j, m_j-1}^{2m_j - 1}$
and  $\tau_{j} = s_{j}^{m_j} \prod_{r=1}^{m_j-1} t_{j,r}^{m_j + r}$.

Now for any $\tau \geq 1$, we have
\begin{align*}
\sum_{v_1^{m+1} \dots v_{m-1}^{2m-1} \leq B / \tau} 1
&\ll
\sum_{ v_2,\dots, v_{m-1}=1}^\infty  \left(\frac{B/ \tau}{v_2^{m + 2} \dots v_{m-1}^{2m-1}} \right)^{1/(m + 1)}
\ll \left(\frac{B}{\tau}\right)^{1/(m + 1)}.
\end{align*}
Similarly,
\begin{align*}
\sum_{v_1^{m+1} \dots v_{m-1}^{2m-1} \leq B / \tau}
 \left(\frac{1}{v_1^{m + 1} \dots v_{m-1}^{2m-1}} \right)^{1/m }
\ll 1.
\end{align*}
Using these estimates it follows that
\begin{align*}
F_1(B)
&\ll B^{(2n+5)\delta}\sum_{k=0}^n B^{-1/m_k}
\sideset{}{^{(2)}}\sum_{\mathbf{s}, \mathbf{t} }
\left(\frac{B}{\tau_k}\right)^{1/(m_k+1)}\prod_{\substack{j=0\\ j\neq k}}^n \tau_j^{-1/m_j}\\
&\ll
B^{-1/m_n(m_n+1)+(2n+5)\delta}
\sideset{}{^{(2)}}\sum_{\mathbf{s}, \mathbf{t} }
\prod_{\substack{j=0}}^n \tau_j^{-1/(m_j+1)},
\end{align*}
where we recall that 
 $\tau_{j} = s_{j}^{m_j} \prod_{r=1}^{m_j-1} t_{j,r}^{m_j + r}$.
Lemma \ref{lemma on omega} now yields
\begin{equation}\label{eq:st}
\sideset{}{^{(2)}} \sum_{\mathbf{s}, \mathbf{t}}
 \prod_{j=0}^n \tau_{j}^{-1/(m_j + 1)}
\leq
\prod_{\substack{ p }}  \left( 1 +  \prod_{j = 0}^n (2m_j-1) p^{-m_j/(m_j+1)} \right)
\ll 1,
\end{equation}
since $\sum_{j=0}^n m_j/(m_j + 1)>1$.
(Note that the factor $2m_j-1$ on the right hand side  comes from taking into account the $2m_j-1$ possibilities where the factor $p$ appears in $s_j$ or $t_j$.)
We have therefore shown that
$$
F_1(B)
\ll B^{-1/m_n(m_n+1)+(2n+5)\delta}.
$$

Turning to the estimation of  $F_2(B)$, we may write
 \begin{align*}
F_2(B)
&\leq
B^{-\delta}
\sum_{ q=1 }^\infty q^{1-\Gamma+ \varepsilon } f_1(q)f_2(q) ,
\end{align*}
where
\begin{align*}
f_1(q)&=
\sideset{}{^{(2)}}\sum_{\mathbf{s}, \mathbf{t}}
\prod_{j=0}^{n}  \left( \frac{\gcd(s_{j}^{m_j}  t_{j,1}^{m_j+1} \dots t_{j,m_j-1}^{2m_j-1}, q)}{ s_{j}^{m_j}  t_{j,1}^{m_j +1} \dots t_{j,m_j -1}^{2m_j-1} } \right)^{1/m_j} ,\\
f_2(q)&= \sum_{\substack{w_j\leq B\\ \mu^2(v_{j,r})=1}}  \prod_{j=0}^{n}  \frac{\gcd(w_{j},q)^{1/m_j}  }{w_{j}^{1/m_j}}.
\end{align*}
We first show that
\begin{equation}\label{eq:desk}
\sum_{ \substack{ x \leq  B^{1/(m + r)}    } }
\frac{\mu^2(x)\gcd(x^{m+r},q)^{1/m}} { x^{(m + r)/m}}
\ll q^\ve,
\end{equation}
if $r\geq 1$.
To see this we note that the left hand side is at most
$$
\sum_{d \mid q} d^{1/m }
\sum_{ \substack{ x \leq  B^{ 1/(m + r) }  \\  d\mid  x^{m+r} } }
\frac{\mu^2(x)}{x^{(m + r)/m}}.
$$
When $\mu^2(x)=1$,
any  $d\mid  x^{m+r}$ admits a factorisation
 $d = d_1d_2^2 \dots d_{m+r}^{m+r}$
 such that $d_1\dots d_{m+r}\mid x$,
  where $\mu^2(d_i)=1$ and $\gcd(d_i, d_j) = 1$ for $i \not = j$.
If we write $x = x' d_1 \dots d_{m+r}$, then
 this sum is
\begin{align*}
&\leq
\sum_{\substack{d \mid q \\ d = d_1 \dots d_{m+r}^{m+r}  }} \frac{(d_1 \dots d_{m+r}^{m+r})^{1/m}}{ (d_1 \dots d_{m+r})^{(m+r)/m} }
\sum_{ \substack{ x' \leq  B^{ 1/(m + r)}/(d_1 \dots d_{m+r})}}
\frac{1}{{x'^{(m+r)/m}}}.
\end{align*}
The inner $x'$-sum is 
absolutely convergent  since $r\geq 1$. 
The remaining sum over $d\mid q$ is $O(q^\ve)$ for any $\ve>0$, by the standard estimate for the divisor function.
This therefore establishes \eqref{eq:desk}.

An application of \eqref{eq:desk} immediately yields
\begin{equation}\label{eq:B}
f_2(q)\leq
 \prod_{j=0}^{n} \prod_{r=1}^{m_j-1}
\sum_{ \substack{ v_{j,r} \leq  B^{ 1/(m_j + r) }   } }
\frac{\mu^2(v_{j,r})\gcd(v_{j,r}^{m_j+r},q)^{1/m_j}} { v_{j,r}^{(m_j + r)/m_j}}\ll
q^\ve,
\end{equation}
for any $\ve>0$.
Next, let 
$$
f_{1,T}(q)=
\sideset{}{^{(2)}}\sum_{\substack{\mathbf{s}, \mathbf{t}\\ 
\max\{\tau_0,\dots, \tau_n\}\geq T}
}
\prod_{j=0}^{n} \tau_j^\ve \left( \frac{\gcd(\tau_j, q)}{\tau_j } \right)^{1/m_j}
$$
for any $\ve> 0$ and $T\geq 1$, where
  $\tau_{j} = s_{j}^{m_j} \prod_{r=1}^{m_j-1} t_{j,r}^{m_j + r}$.
In particular we have $f_1(q)\leq f_{1,1}(q)$. 
We claim that 
\begin{equation}\label{eq:nobacon}
f_{1,T}(q)\ll q^{6\ve(m_0+\dots+m_n)} T^{-\ve}
\end{equation}
for any sufficiently small $\ve>0$.
Once achieved, 
it will follow that
$$
F_2(B)
\ll
B^{-\delta },
$$
since (\ref{bdd on rm/m}) implies that
 $\Gamma-1>1$.

To check the claim we let $\mathcal{T}$ denote the set of vectors 
$(\tau_0,\dots,\tau_n)\in \NN^{n+1}$ with the property that for any prime $p$ we have 
$\val_p(\tau_j)\in \{0,m_j,\dots,3m_j-1\}$ and, furthermore,  if $p\mid \tau_0\dots\tau_n$ then 
$\val_p(\tau_j)>0$ for all $0\leq j\leq n$. 
Associated to any $(\tau_0,\dots,\tau_n)\in \mathcal{T}$
is a unique choice for $\mathbf{s}, \mathbf{t}$.
Thus we find that 
\begin{align*}
f_{1,T}(q)
&\ll \frac{1}{T^\ve}
\sum_{(\tau_0,\dots,\tau_n)\in \mathcal{T}}
\prod_{j=0}^{n} \tau_j^{2\ve} \left( \frac{\gcd(\tau_{j}, q)}{ \tau_j} \right)^{1/m_j}\\
&\ll \frac{1}{T^\ve}
\prod_p 
\left(1+
\prod_{j=0}^n
\sum_{m_j\leq \alpha_j \leq 3m_j-1}  p^{\min(\alpha_j,\val_p(q))/m_j -\alpha_j/m_j+2\ve \alpha_j}\right).
\end{align*}
When $p\nmid q$ the corresponding local factor takes the shape $$1+O(p^{-n-1+2\ve(m_0+\dots+m_n)}).$$
Alternatively, when $p\mid q$ the factor is $O(p^{6\ve(m_0+\dots+m_n)})$ 
Assuming that $\ve$ is sufficiently small this therefore concludes the proof of \eqref{eq:nobacon}.

Finally we must analyse
$$
F_3(B)
=B^{ - \frac{\delta}{m_n(m_n+1)}}
\sideset{}{^{(2)}}\sum_{\mathbf{s}, \mathbf{t} } \sideset{}{^{(1)}}\sum_{\mathbf{v}}
\prod_{j=0}^{n} \gamma_{j}^{ -1/(m_j+1)}.
$$
We note that
\begin{align*}
\sum_{v_1^{m+1} \dots v_{m-1}^{2m-1} \leq B}
 \left(\frac{1}{v_1^{m + 1} \dots v_{m-1}^{2m-1}} \right)^{1/(m+1) }
\ll \log B.
\end{align*}
Applying  \eqref{eq:st} to handle the resulting sum over $\s$ and $\t$ it easily follows that
$$
F_3(B)
\ll B^{ - \frac{\delta}{m_n(m_n+1)}+\ve},
$$
for any $\ve>0$.

We substitute our bounds for   the error terms
back into \eqref{eq:S1}. This yields
\begin{equation}
\label{eq:S1'}
\begin{split}
\sum_{\substack { (\mathbf{s}, \mathbf{t}) \not = (\mathbf{1}, \mathbf{1})  } }
&\varpi(\mathbf{s}, \mathbf{t} ) \cdot \# \mathcal{N}_{\mathbf{c}}(B; \mathbf{s}, \mathbf{t} )
\\&=
M(B)
+
O\left(B^{\Gamma+\ve}
\left\{
B^{ - \frac{1}{m_n(m_n+1)} +(2n+5)\delta}
+  B^{
- \frac{\delta}{m_n(m_n+1)}  } \right\}
\right).
\end{split}
\end{equation}

It remains to consider the case
$( \mathbf{s}, \mathbf{t} ) = (\mathbf{1}, \mathbf{1})$, for which we
rerun the above argument, with the special choice
$(\mathbf{s},\mathbf{t}) = (\mathbf{1}, \mathbf{1})$. The starting point is
$$
\# \mathcal{N}_{ \mathbf{c} }(B; \mathbf{1}, \mathbf{1} ) =
\sideset{}{^{(1)}}\sum_{\mathbf{v} }
M_{\mathbf{c}; \boldsymbol{\gamma} }(B),
$$
where now $\boldsymbol{\gamma}$ has components
$
\gamma_{j} = \prod_{r=1}^{m_j - 1}   {v}_{j,r}^{m_j + r}.
$
Tracing through the argument, this ultimately leads to the conclusion
\begin{equation}
\label{eq:S2'}
\begin{split}
 \# \mathcal{N}_{\mathbf{c}}(B; \mathbf{1}, \mathbf{1} )
=
\widetilde M(B)
+
O\left(B^{\Gamma+\ve}
\left\{
B^{ - \frac{1}{m_n(m_n+1)} +(2n+5)\delta}
+  B^{
- \frac{\delta}{m_n(m_n+1)}  }
 \right\}
\right),
\end{split}
\end{equation}
for any $\ve>0$,
where now
$$
\widetilde M(B)=B^{\Gamma }   \sideset{}{^{(1)}}\sum_{\mathbf{v}}
\mathfrak{S}_{ \mathbf{c};\boldsymbol{\gamma}  }   \mathfrak{J}_{\mathbf{c}  }
\prod_{j=0}^n \gamma_{j}^{-1/m_j} .
$$

We are now ready to complete the proof of Theorem \ref{t:1}.
Repeating the arguments used in
\eqref{eq:nobacon} during our  analysis of $F_2(B)$, it is easy to 
remove the constraint
$s_{j}^{m_j} \prod_{r=1}^{m_j-1} t_{j,r}^{m_j + r}\leq B$ from the 
 summation over $\mathbf{s}, \mathbf{t}$  in the main term $M(B)$. The total error in doing this is $O(B^{\Gamma-\eta_1})$, for some $\eta_1>0$. 
We proceed under the assumption that \eqref{cond2} holds and $\delta$ satisfies
\eqref{deltass1}.
We may  combine
\eqref{eq:doggy} and \eqref{eq:i-e}
with \eqref{eq:S1'} and
\eqref{eq:S2'} in order to   conclude that
$N(\PP^{n-1},\Delta;B)$ is
$$
c_B B^{\Gamma}+
O\left(B^{\Gamma+\ve}
\left\{
B^{ - \frac{1}{m_n(m_n+1)} +(2n+5)\delta}
+  B^{
- \frac{\delta}{m_n(m_n+1)}  }
+ B^{- \eta_1}
 \right\}
\right),
$$
for any $\ve>0$, where
$$
c_B=
\frac{1}{2}
 \sum_{\boldsymbol{\epsilon} \in \{ \pm 1 \}^{n+1} }
\mathfrak{J}_{\boldsymbol{\epsilon} \mathbf{c} }
\left(
\sum_{\substack { (\mathbf{s}, \mathbf{t})= (\mathbf{1},\mathbf{1})}}
+
\sum_{\substack { (\mathbf{s}, \mathbf{t})\neq (\mathbf{1},\mathbf{1})}}
\varpi(\mathbf{s},\mathbf{t})\right)
 \sideset{}{^{(1)}}\sum_{\mathbf{v}}
\frac{
\mathfrak{S}_{ \boldsymbol{\epsilon} \mathbf{c};\boldsymbol{\gamma}  }}{ \prod_{j=0}^n \gamma_{j}^{1/m_j}}.
$$
The error term is of the shape claimed in    Theorem \ref{t:1} and so it remains to analyse the quantity $c_B$.

The dependence on $B$ in the factor  $c_B$ arises
from the definition of the sum $\sum^{(1)}$.
A straightforward repetition of our arguments above suffice to show that
$$
c_B=c+O(B^{-\eta_2})
$$
for some $\eta_2>0$,
where  $c$ is the constant that is defined as in $c_B$, but with the summation conditions
$\gamma_j\leq B$ removed from $\sum^{(1)}$, for $0\leq j\leq n$.
This shows that 
$N(\PP^{n-1},\Delta;B)=
c B^{\Gamma}+O(B^{\Gamma-\eta})$ for an appropriate $\eta>0$, as claimed in Theorem \ref{t:1}. To go further, we adopt
 the notation $\s^{\m}  \w  = (s_0^{m_0} w_0, \dots, s_n^{m_n} w_n)$, where
we recall that $w_{j} = v_{j,1}^{m_j+1} \dots v_{j, m_j-1}^{2m_j - 1}$ for $0\leq j\leq n$.
Changing the order of summation, we may write
\begin{equation}
\label{eq:c1''}
c =
\frac{1}{2}
 \sum_{\boldsymbol{\epsilon} \in \{ \pm 1 \}^{n+1} }
\hspace{-0.3cm}
\mathfrak{J}_{\boldsymbol{\epsilon} \mathbf{c} }
\hspace{-0.3cm}
\sum_{\mathbf{v} \in \mathbb{N}^{\sum_{j=0}^n (m_j-1) } }
\hspace{-0.4cm}\frac{\prod_{j=0}^n \mu^2(v_{j,1} \dots v_{j, m_j-1} ) }{\prod_{j=0}^n  w_{j}^{1/m_j}}
\sum_{\substack { \mathbf{s}, \mathbf{t} \\ \mathbf{t}| \v }}
\varpi(\mathbf{s},\mathbf{t})
\frac{
\mathfrak{S}_{ \boldsymbol{\epsilon} \mathbf{c}; \s^{\m}  \w  }}{ \prod_{j=0}^n  s_j},
\end{equation}
with the understanding that $\varpi(\mathbf{1}, \mathbf{1}) = 1$
and  $\t \mid \v$ means $t_{j,r} \mid v_{j,r}$ for all $j$ and $r$. 
We claim that 
\begin{equation}\label{eq:airport}
\sum_{\substack { \mathbf{s}, \mathbf{t} \\ \mathbf{t}| \v }}
\varpi(\mathbf{s},\mathbf{t})
\frac{
\mathfrak{S}_{ \boldsymbol{\epsilon} \mathbf{c}; \s^{\m}  \w  }}{ \prod_{j=0}^n  s_j}=
\prod_{p} \left( \lim_{T \rightarrow \infty}  \frac{\mathcal{M}_{ \boldsymbol{\epsilon}, T} (\v, p) }{p^{nT}} \right),
\end{equation}
where
\begin{equation}\label{eq:alt}
\mathcal{M}_{ \boldsymbol{\epsilon}, T}(\v, p) = \# \left\{ \k \bmod p^T :  
\begin{array}{l}
\sum_{j=0}^n \epsilon_jc_j w_j k_j^{m_j} \equiv 0 \bmod p^T\\
 \exists j \textnormal{ such that }
p \nmid k_j v_{j,1} \dots v_{j, m_j-1}  
\end{array}
\right\}.
\end{equation}
This will complete our analysis of the leading constant $c$ in Theorem \ref{t:1}.

To check the claim we put 
   $c'_j = \epsilon_j c_j$ for $0\leq j\leq n$.
It follows from  \eqref{eq:SS} and multiplicativity that 
$$
\frac{\mathfrak{S}_{ \mathbf{c}'; \s^{\m}  \w }}{\prod_{j=0}^n s_j }
= \prod_{p} \frac{1}{\prod_{j=0}^n s_j^{[p]} }  \left( 1 + \sum_{t=1}^{\infty} \mathcal{B}_{\s^{\m}  \w}(p^t) \right),
$$
where
$$
\mathcal{B}_{\mathbf{s^m}\w}(p^t) = \frac{1}{p^{t(n+1)} }
\sum_{\substack{ 0 \leq a < p^t \\   \gcd(a,p^t) = 1 } }
\prod_{j=0}^n \  \sum_{ 0 \leq k < p^t  }
e \left(\frac{a}{p^t} c'_{j} {s_j^{m_j}w_{j}}  k^{m_j}  \right).
$$
Letting
$$
N
(p^T) = \# \left\{ \k \bmod p^T : \sum_{j=0}^n c'_j s_j^{m_j} w_j k_j^{m_j} \equiv 0 \bmod p^T \right\},
$$
we deduce that 
$$
\frac{\mathfrak{S}_{ \mathbf{c}'; \s^{\m}  \w }}{\prod_{j=0}^n s_j }
= \prod_{ \substack{ p \\ p \nmid s_0 \dots s_n } } \left(  \lim_{T \rightarrow \infty} \frac{N(p^T)}{p^{nT}} \right)
\prod_{ \substack{ p \\ p |  s_0 \dots s_n } } \left(  \lim_{T \rightarrow \infty} \frac{N(p^T)}{ p^{nT} \prod_{j=0}^n s_j^{[p]} } \right).
$$
Next, we put 
$$
\mathcal{X}_{p, T}(\s, \t)  =  \left\{ \k \bmod p^T :
\begin{array}{l}
 \sum_{j=0}^{n} c_j' w_j k_j^{m_j} \equiv 0 \bmod p^T \\ 
p\mid s_j \Rightarrow p\mid k_j
\end{array} \right\},
$$
for any 
$(\s, \t)$ such that  $(\s, \t) = (\s^{[p]}, \t^{[p]})$.
It is clear that $N(p^T)=\#\mathcal{X}_{p, T}(\s, \t) $ when $p\nmid s_0\dots s_n$
 and that 
$
N(p^t)/ \prod_{j=0}^n s_j^{[p]}  =
\#\mathcal{X}_{p, T}(\s, \t)  
$
 when $p\nmid s_0\dots s_n$. It follows that 
 $$
 \frac{\mathfrak{S}_{ \mathbf{c}'; \s^{\m}  \w }}{\prod_{j=0}^n s_j }=
 \prod_p  
 \lim_{T \rightarrow \infty}  \frac{ \#\mathcal{X}_{p, T}(\s, \t)}{p^{nT}}=
 \prod_p \mathcal{X}_{p}(\s, \t),
 $$
 say.

Using the fact that $\t \mid  \v$  if and only if $\t^{[p]} \mid  \v^{[p]}$ for all $p$, it follows from part (i) of Lemma \ref{lemma on omega} that 
\begin{align*}
\mathfrak{S}_{ \boldsymbol{\epsilon} \mathbf{c};  \w  }
&+ \sum_{\substack { (\mathbf{s}, \mathbf{t}) \not = (\mathbf{1}, \mathbf{1})  \\  \mathbf{t} | \v }}
\varpi(\mathbf{s},\mathbf{t})
\frac{
\mathfrak{S}_{ \boldsymbol{\epsilon} \mathbf{c}; \s^{\m} \w  }}{ \prod_{j=0}^n  s_j}\\
&=
\prod_{p} \Bigg(  \# \mathcal{X}_{p}(\mathbf{1}, \mathbf{1}) +
\sum_{ \substack{   (\mathbf{s}, \mathbf{t}) \not = (\mathbf{1}, \mathbf{1}) \\  (\mathbf{s}, \mathbf{t}) = (\mathbf{s}^{[p]}, \mathbf{t}^{[p]})  \\ \t \mid \v^{[p]}  }  }
\varpi (\mathbf{s}, \mathbf{t} ) \cdot   \# \mathcal{X}_{p}(\mathbf{s}, \mathbf{t}  )
\Bigg).
\end{align*}
On the other hand, on 
appealing to  the inclusion-exclusion principle and the definition of $\varpi$, for any prime $p$
we return to \eqref{eq:alt} and see that 
\begin{align*}
\mathcal{M}_{ \boldsymbol{\epsilon}, T}(\v, p)
&=\#
\mathcal{X}_{p,T}(\mathbf{1}, \mathbf{1}) - \# \bigcup_{ \substack{ (\mathbf{g}; p; I) \\  \b(\mathbf{g}; p; I) \mid  \v^{[p]}    } }
\mathcal{X}_{p,T}(\mathbf{a}( \mathbf{g}; {p}; I), \mathbf{b} (\mathbf{g}; {p};{I}) )
\\
&= \# \mathcal{X}_{p,T}(\mathbf{1}, \mathbf{1}) + \sum_{k=1}^{\infty} (-1)^{k} \sum_{ \substack{ \# \mathcal{R} = k \\  \mathcal{R} = \mathcal{R}(p)  \\ \b(\mathcal{R}) \mid  \v^{[p]}  }  } \# \mathcal{X}_{p,T}(\mathbf{a}(\mathcal{R}), \mathbf{b}(\mathcal{R}) )
\\
&= \# \mathcal{X}_{p,T}(\mathbf{1}, \mathbf{1}) +
\sum_{ \substack{   (\mathbf{s}, \mathbf{t}) \not = (\mathbf{1}, \mathbf{1}) \\  (\mathbf{s}, \mathbf{t}) = (\mathbf{s}^{[p]}, \mathbf{t}^{[p]})  \\ \t \mid \v^{[p]}  }  }
\varpi (\mathbf{s}, \mathbf{t} ) \cdot   \# \mathcal{X}_{p,T}(\mathbf{s}, \mathbf{t}  ).
\end{align*}
Dividing by $p^{nT}$ and taking the limit $T\to \infty$, we are now easily led to the proof of the  claim \eqref{eq:airport}.

\section{Thin sets: proof of Theorem \ref{t:2}}\label{s:thin}

Let   $\Gamma = \sum_{j=0}^n \frac{1}{m_j} - 1$, as in \eqref{defnGamma}.
In this section we assume that
\eqref{cond1} holds and we let  $\Omega\subset \PP^{n-1}(\QQ)$ be a thin set.  Theorem
\ref{t:2} is  concerned
with an upper bound for the quantity
$$
N_{\Omega}(\PP^{n-1},\Delta;B) =
\frac{1}{2}\#\left\{\x\in \ZZ_{\neq 0}^{n+1}:
\begin{array}{l}
\gcd(x_0,\dots,x_{n-1})=1\\
|\x|\leq B,
\text{ $x_i$ is $m_i$-full $\forall ~0\leq i\leq n$}\\
c_0x_0+\dots+c_{n-1}x_{n-1}=x_n\\
(x_0:\dots:x_{n-1})\in \Omega
\end{array}{}
\right\},
$$
under the conditions on $\Omega$ that are stated in the theorem.
Let us write $N_{\Omega}(B)=
N_{\Omega}(\PP^{n-1},\Delta;B)$ to ease notation.
All of the implied constants in this section are allowed to depend on the thin set $\Omega$.

We shall proceed by using information about the size of thin sets modulo $p$
on a set of primes $p$ of positive density. Our thin set $\Omega$ is contained in a finite union
$\bigcup_{i=1}^t \Omega_i$
of thin
subsets of type I and type II.
We shall abuse notation and write $\Omega_i(\FF_p)$ for the image of $\Omega_i$ in $\PP^{n-1}(\FF_p)$ under reduction modulo $p$. Similarly, we shall write
$\widehat{\Omega}_i(\mathbb{F}_p)$ for the set of $\FF_p$-points on the affine cone over this set of points.

Let $\Omega_i \subset \mathbb{P}^{n-1}(\mathbb{Q})$ be a thin subset of type I. Then it follows from the Lang-Weil estimates \cite{LW} that there exits $C_1 > 0$ such that
\begin{eqnarray}
\label{type I est}
\# \Omega_i(\mathbb{F}_p) \leq C_1 p^{n-2},
\end{eqnarray}
for every sufficiently large prime $p$. If $\Omega_i \subset \mathbb{P}^{n-1}(\mathbb{Q})$ is a thin subset of type II, then
according to  Serre
\cite[Thm.~3.6.2]{Ser08}
 there exists a constant  $\kappa \in (0,1)$ such that
\begin{eqnarray}
\label{type II est}
\# \Omega_i(\mathbb{F}_p) \leq \kappa p^{n-1},
\end{eqnarray}
for every sufficiently large  prime $p\in \mathsf{P}_{\Omega_i}$, in the notation introduced before the statement of
Theorem \ref{t:2}.

We take advantage of this information by noticing  that
$$
N_{\Omega}(B)\leq
\sum_{i=1}^t
\#\left\{\x\in \ZZ_{\neq 0}^{n+1}:
\begin{array}{l}
\gcd(x_0,\dots,x_{n-1})=1\\
|\x|\leq B,
\text{ $x_i$ is $m_i$-full $\forall ~0\leq i\leq n$}\\
c_0x_0+\dots+c_{n-1}x_{n-1}=x_n\\
(x_0:\dots:x_{n-1}) \bmod p \in \Omega_i(\FF_p) \text{ $\forall p\in \mathcal{S}_i$}
\end{array}{}
\right\},
$$
for any finite subset of primes $\mathcal{S}_i$.
We stipulate that
$\min_{p \in \mathcal{S}_i} p$ is greater than some absolute constant depending only on $\prod_{i=0}^n |c_i| m_i$ and the thin subset $\Omega_i$.
Let $$H_i = \prod_{p \in \mathcal{S}_i} p$$
and put
$$
\Omega_{H_i}= \prod_{p \in \mathcal{S}_i} \widehat{\Omega}_i(\mathbb{F}_p).
$$
Given $\mathbf{b}' = (b_0, \ldots, b_{n-1})$ we let
\begin{equation}\label{eq:bn}
b_n = c_0 b_0 + \dots + c_{n-1} b_{n-1}
\end{equation}
and we put $\mathbf{b} = (b_0, \ldots, b_n)$.
Appealing to \eqref{eq:sign} and putting $c_n=-1$, we deduce that
$N_{\Omega}(B)
$ is
\begin{align*}
&\leq
\sum_{i=1}^t
\sum_{\mathbf{b}' \in \Omega_{H_i} }
\#\left\{\x\in \ZZ_{\neq 0}^{n+1}:
\begin{array}{l}
\gcd(x_0,\dots,x_{n})=1, ~|\x|\leq B \\
x_{j} = \pm u_{j}^{m_j} \prod_{r=1}^{m_j-1} v_{j,r}^{m_j+r} ~\forall ~0\leq j\leq n\\
  \mu^2(v_{j,r}) = 1,~ \gcd(v_{j,r}, v_{j,r'}) = 1\\
c_0x_0+\dots+c_{n}x_{n}=0\\
\x\equiv \b \bmod{H_i}
\end{array}{}
\right\}\\
&\leq
\sum_{i=1}^t
\sum_{\boldsymbol{\epsilon}\in \{\pm 1\}^{n+1}}
\hspace{-0.2cm}
\sideset{}{^{(1)}}\sum_{\mathbf{v}}
\hspace{-0.2cm}
\sum_{\mathbf{b}' \in \Omega_{H_i} }
\hspace{-0.1cm}
\#\left\{\mathbf{u}\in \NN^{n+1}:
\begin{array}{l}
\gcd(u_0w_0,\dots,u_{n}w_n)=1\\
u_j^{m_j}w_j\leq B, \text{ $0\leq j\leq n$}\\
\sum_{0\leq j\leq n}
\epsilon_j c_j w_ju_j^{m_j}=0\\
u_j^{m_j}w_j\equiv b_j \bmod{H_i}, \text{ $0\leq j\leq n$}
\end{array}{}
\right\},
\end{align*}
where
$w_j=\prod_{r=1}^{m_j-1} {v}_{j,r}^{m_j+r}$ and
$
\sum^{(1)}_{\mathbf{v}}
$
denotes a sum over $\mathbf{v}=(v_0,\dots,v_n)\in \NN^{n+1}$ satisfying $w_{j} \leq B$ and the coprimality conditions
\begin{equation}\label{eq:sum1}
\gcd \left( {v}_{j,r}, {v}_{j,r'} \right) = 1, \quad  \mu^2 \left( {v}_{j,r} \right) = 1,\quad
\gcd(w_0, \ldots, w_n) = 1.
\end{equation}
(This should not be confused with the notation $\sum^{(1)}_{\mathbf{v}}$ in
\S \ref{s:manin}, in which the condition $\gcd(w_0, \ldots, w_n) = 1
$ does not appear.)

Let us define
$$
\Omega_{\w; H_i}^{(i)} = \prod_{p \in \mathcal{S}_i} \Omega_{\w; p}^{(i)},
$$
where
$$
\Omega_{\w; p}^{(i)} = \left\{ \h \in \FF_p^{n+1} \backslash \{ \mathbf{0} \} :
\begin{array}{l}
h^{m_j}_j w_j \equiv  b_j \bmod{p} \text{ for $0 \leq j \leq n$}\\
\text{for some
$\b' \in \widehat{\Omega}_i(\mathbb{F}_p)$
}
\end{array}
  \right\}.
$$
In view of the coprimality conditions we are only interested in
$\b \not \equiv \mathbf{0} \bmod{p}$ for all $p \in \mathcal{S}_i$.
Thus, for each $p \in \mathcal{S}_i$
and $\mathbf{h}\in \Omega_{\w; p}^{(i)}$
 we have
\begin{eqnarray}
\label{cond nonsing}
h_j^{m_j}w_j \not \equiv 0 \bmod{p} \quad \text{ for some $j\in \{0,\dots,n\}$.}
\end{eqnarray}

With this notation, we may write
$$
N_{\Omega}(B)
\leq
\sum_{i=1}^t
\sum_{\boldsymbol{\epsilon}\in \{\pm 1\}^{n+1}}
\hspace{-0.2cm}
\sideset{}{^{(1)}}\sum_{\mathbf{v}}
\hspace{-0.2cm}
\sum_{\mathbf{h} \in \Omega_{\mathbf{w};H_i}^{(i)}  }
\hspace{-0.2cm}
\#\left\{\mathbf{u}\in \NN^{n+1}:
\begin{array}{l}
u_j^{m_j}w_j\leq B, \text{ $0\leq j\leq n$}\\
\sum_{0\leq j\leq n}
\epsilon_j c_j w_ju_j^{m_j}=0\\
\mathbf{u}\equiv \mathbf{h}\bmod{H_i}
\end{array}{}
\right\}.
$$
Note that $\#\Omega_{\mathbf{w};H_i}^{(i)}\leq H_i^{n+1}$.
We now seek to apply Theorem \ref{t:M} to the inner sum, much as  in \eqref{eq:S2'}.
Let $\eta>0$ be sufficiently small and assume that  $\delta$ is chosen so that
$(2n+5)\delta=\frac{1}{m_*(m_*+1)}-3\eta$, 
where we have found it convenient to set 
$m_* = \max_{0 \leq j \leq n} m_j$.  
This is plainly  satisfactory for \eqref{deltass1}.  
We take $\varepsilon=\eta$ in the statement of
Theorem \ref{t:M} and
we assume  that
$H_i$ satisfies
\begin{eqnarray}
\label{H cond}
H_i^{2(n+1)} \leq\min \{B^{  \frac{\delta}{m_*(m_*+1)}-\ve-\eta},  B^{ \frac{1}{m_*(m_*+1)}-(2n+5)\delta-\ve-\eta } \}=B^{\eta},
\end{eqnarray}
where the second equality is true provided that $\eta$ is small enough in terms of $m_*$ and $n$. Under this assumption
it can be verified that the overall contribution from the error term in Theorem \ref{t:M}  is $O(B^{\Gamma - \eta})$. It follows that
\begin{equation}\label{Nthinbound2}
N_{\Omega}(B)
\ll
\sum_{i=1}^t
\frac{B^{\Gamma}}{H_i^{n+1}}\sum_{\boldsymbol{\epsilon} \in \{ \pm 1 \}^n }
\sideset{}{^{(1)}}\sum_{\mathbf{v}}
\sum_{ \mathbf{h} \in \Omega_{\w; H_i}^{(i)}} \frac{\mathfrak{S}_{  \boldsymbol{\epsilon} \mathbf{c}  ;    { \w } } (\mathbf{h}, H_i;0) }{ \prod_{j=0}^{n}  w_j^{1/m_j} }
+B^{\Gamma - \eta},
\end{equation}
since $\mathfrak{J}_{\boldsymbol{\epsilon} \mathbf{c} }\ll 1$.

Before proceeding with an analysis of the singular series, we first  record some estimates  for the size of  $\Omega_{\w;p}^{(i)}$,
for any    $i\in \{1,\dots,t\}$.

\begin{lem}
\label{lem:prang}
We have
$\#\Omega_{\w; p}^{(i)}\leq m_* p^n$
for any $p\in \mathcal{S}_i$.
\end{lem}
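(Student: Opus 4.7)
The plan is to bound $\#\Omega_{\w;p}^{(i)}$ by relaxing its defining conditions, keeping only the single congruence that results from substituting $b_j\equiv h_j^{m_j}w_j \pmod p$ into the linear relation \eqref{eq:bn}, namely
$$ h_n^{m_n} w_n \equiv \sum_{j=0}^{n-1} c_j\, h_j^{m_j}\, w_j \pmod{p}. $$
The extra membership condition $(h_0^{m_0}w_0,\dots,h_{n-1}^{m_{n-1}}w_{n-1}) \in \widehat{\Omega}_i(\FF_p)$ can only shrink the set, so I would ignore it for the sake of an upper bound. The assumption that $\min_{p\in\mathcal{S}_i} p$ exceeds a constant depending on $\prod_i |c_i| m_i$ means that each $c_j$ is a unit in $\FF_p$, so all the polynomial degree considerations below will be as expected.

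I would then split into two cases according to whether $p\mid w_n$. In the main case $p\nmid w_n$, I fix $(h_0,\dots,h_{n-1}) \in \FF_p^n$ in one of $p^n$ ways; the right-hand side becomes a constant $C\in\FF_p$, and the equation $h_n^{m_n} \equiv w_n^{-1}C \pmod p$ has at most $m_n$ solutions because $X^{m_n}-w_n^{-1}C$ has at most $m_n$ roots in a field. This contributes at most $m_n p^n \leq m_* p^n$.

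In the remaining case $p\mid w_n$, the congruence collapses to $\sum_{j<n} c_j w_j h_j^{m_j} \equiv 0 \pmod p$ and places no restriction on $h_n$. Here I would use the coprimality $\gcd(w_0,\dots,w_n)=1$, which is built into the $\sum^{(1)}_{\v}$ sum via \eqref{eq:sum1}, to locate some $k<n$ with $p\nmid w_k$. Fixing the remaining $n-1$ coordinates in $\FF_p^{n-1}$ and solving the single-variable equation $c_k w_k h_k^{m_k} \equiv -\sum_{j\neq k, j<n} c_j w_j h_j^{m_j} \pmod p$ yields at most $m_k \leq m_*$ values of $h_k$, and $h_n$ is then free over $p$ values, giving $m_k p^{n-1}\cdot p \leq m_* p^n$.

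There is no serious obstacle; the only point that requires the standing hypotheses is that every $c_j$ and the chosen $w_k$ are invertible modulo $p$, so that the single-variable reductions really are polynomial equations of degree $m_j$ in a field, to which the standard bound on the number of roots applies.
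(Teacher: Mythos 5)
Your proof is correct and essentially the same as the paper's: both arguments isolate a coordinate $k$ with $p\nmid w_k$ (guaranteed by $\gcd(w_0,\dots,w_n)=1$), fix the remaining $n$ coordinates of $\mathbf{h}$ freely, and bound the number of solutions of the resulting single-variable congruence $h_k^{m_k}w_k \equiv (\text{fixed}) \bmod p$ by $m_k \leq m_*$ since $w_k$ (and $c_k$) are units mod $p$. The paper simply writes ``without loss of generality $p\nmid w_0$'' and solves for $h_0$, whereas you make the choice of distinguished index explicit via a two-case split; that is a purely organizational difference.
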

\begin{proof}
Suppose without loss of generality that $p\nmid w_0$ and let $h_1,\dots,h_n\in \FF_p$ be such that
$h_j^{m_j}w_j\equiv b_j \bmod{p}$ for $1\leq j\leq n$, for some $\mathbf{b}'$, where
$b_0=c_0^{-1}(b_n-c_1b_1-\dots -c_{n-1}b_{n-1})$. Then there are at most $m_0\leq m_*$ choices for $h_0$. This confirms the  lemma.
\end{proof}

\begin{lem}
\label{lem:omega bound1''}
Assume that $p\in
 \mathcal{S}_i$ and
 $p\nmid w_j $ for $0 \leq j \leq n$.
Then
we have
$$
\# \Omega_{\w;p}^{(i)}
\leq \begin{cases}
(p-1) \#\Omega_i( \FF_p ) &\text{ if
 $p\in \mathsf{Q}_{\mathbf{m}}$,}\\
 m_*^n (p-1) \#\Omega_i( \FF_p ) &\text{ otherwise},
\end{cases}
$$
where
$\mathsf{Q}_{\mathbf{m}}$ is defined in  \eqref{eq:Q}.
\end{lem}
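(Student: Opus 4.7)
The plan is to bound $\#\Omega_{\w;p}^{(i)}$ by fibering the natural map $\phi: \h \mapsto (h_0^{m_0}w_0 : \dots : h_{n-1}^{m_{n-1}}w_{n-1})$ over $\Omega_i(\FF_p)$. I would first observe that, given $\h \in \Omega_{\w;p}^{(i)}$, since $p\nmid w_j$ the nondegeneracy condition (\ref{cond nonsing}) prevents all the $h_j^{m_j}w_j$ from simultaneously vanishing, so $\phi(\h)$ is a well-defined point of $\PP^{n-1}(\FF_p)$; moreover, it must lie in $\Omega_i(\FF_p)$, because the witness $\b'\in \widehat{\Omega}_i(\FF_p)$ is a nonzero scalar multiple $\lambda \cdot (\beta_0,\dots,\beta_{n-1})$ of any chosen representative of $\phi(\h)$. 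Thus, after fixing a point $P=(\beta_0:\dots:\beta_{n-1}) \in \Omega_i(\FF_p)$ with representative $(\beta_0,\dots,\beta_{n-1})$, every fiber element is parametrised by $\lambda \in \FF_p^*$ together with solutions of $h_j^{m_j}w_j = \lambda\beta_j$ for $0\leq j\leq n-1$ and $h_n^{m_n}w_n = \lambda\beta_n$, where $\beta_n := c_0\beta_0+\dots+c_{n-1}\beta_{n-1}$.

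To count the fiber above $P$, I would let $e_j = \gcd(m_j,p-1)$ and set
\[
T = \{j\in \{0,\dots,n-1\} : \beta_j\neq 0\} \cup \{n : \beta_n \neq 0\}.
\]
For $j \notin T$ the coordinate $h_j$ is forced to vanish, while for $j\in T$ the equation $h_j^{m_j} = \lambda\beta_j w_j^{-1}$ is solvable if and only if $\lambda$ lies in a prescribed coset of the subgroup $(\FF_p^*)^{e_j}$ of $e_j$-th powers. Using that $(\FF_p^*)^{e_j}\cap (\FF_p^*)^{e_{j'}} = (\FF_p^*)^{\mathrm{lcm}(e_j,e_{j'})}$, the set of admissible $\lambda$ is either empty or a coset of $(\FF_p^*)^L$ with $L:=\mathrm{lcm}(e_j : j\in T)$, and thus has at most $(p-1)/L$ elements. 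Each admissible $\lambda$ then yields exactly $e_j$ choices of $h_j$ for every $j\in T$, so the fiber above $P$ has at most $(p-1)\prod_{j\in T}e_j/L$ elements.

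The final step will split on whether $p\in \mathsf{Q}_{\mathbf{m}}$. The defining condition (\ref{eq:Q}) is equivalent to pairwise coprimality of $e_0,\dots,e_n$, and this property descends to any subset, so when $p\in \mathsf{Q}_{\mathbf{m}}$ one has $\prod_{j\in T}e_j = L$ and the fiber bound collapses to $p-1$. Otherwise, by picking $j_0\in T$ with $e_{j_0}=\max_{j\in T}e_j$ (so that $e_{j_0}\mid L$), one obtains $\prod_{j\in T}e_j/L \leq \prod_{j\in T\setminus\{j_0\}}e_j \leq m_*^{|T|-1}\leq m_*^n$. Summing the resulting fiber bound over $P\in \Omega_i(\FF_p)$ will then give the two cases of the lemma. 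The main difficulty I anticipate is purely group-theoretic: identifying the telescoping between $\prod_{j\in T}e_j$ (the number of $m_j$-th roots per admissible $\lambda$) and $L$ (the index of admissible scalars), and observing that (\ref{eq:Q}) is precisely the condition making this ratio equal to $1$.
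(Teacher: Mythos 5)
Your proposal is correct and follows essentially the same route as the paper: fibre $\Omega_{\w;p}^{(i)}$ over $\Omega_i(\FF_p)$, parametrise each fibre by the scalar $\lambda\in\FF_p^*$ (the paper's $a$), and observe that the count is $(p-1)\prod_j e_j/\mathrm{lcm}_j(e_j)$ with $e_j=\gcd(m_j,p-1)$, the ratio being $1$ exactly under the pairwise-coprimality condition encoded by $\mathsf{Q}_{\mathbf{m}}$ and at most $m_*^n$ in general. The only cosmetic difference is that the paper phrases the coset argument via a primitive root $g$ and divisibility of the exponent $u$, while you phrase it via cosets of power subgroups; the content is identical.
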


\begin{proof}
Let $ z \in \Omega_i(\FF_p)$.
Either there are no points in  $\Omega_{\w;p}^{(i)}$ corresponding to $z$, or else
we may assume that there exists $\mathbf{h}\in \FF_p^{n+1}\setminus \{\0\}$ such that
$$
b_j \equiv h_j^{m_j} w_j \bmod{p}
$$
for $0 \leq j \leq n$, for some
 $(b_0, \ldots, b_{n}) \in \FF_p^{n+1}$ such that $(b_0: \cdots : b_{n-1}) = z$, in which $b_n$ satisfies \eqref{eq:bn}.
   Then the number of points in
 $\Omega_{\w;p}^{(i)}$ associated to $z$ is at most the number of
 $a \in \FF_p^{*}$ and $\mathbf{k}\in \FF_p^{n+1}$ such that
$a b_j \equiv k_j^{m_j} w_j \bmod{p}$ for $0 \leq j \leq n$.
For fixed $a\in \FF_p^*$,  since $w_j \not \equiv 0 \bmod{p}$ for $0 \leq j \leq n$, it follows that
the number of  $\k$ is precisely the number of solutions to the set of congruences
$$
a h_j^{m_j} \equiv k_j^{m_j} \bmod{p},
$$
for $0 \leq j \leq n$.

If $b_j=0$ then it forces $h_j= 0$, and so  $k_j=0$. Suppose  without loss of generality that  $b_j\neq 0$ for $0 \leq j \leq R$ and  $b_{R+1}=\dots=b_n=0$. Let us fix a choice of a primitive element $g \in \FF_p^{*}$
and put  $a = g^u$,  where $1 \leq u \leq p-1$.
Then
\begin{align*}
\# \{ \k \in &\FF_p^{n+1}: a h_j^{m_j} \equiv k_j^{m_j} \bmod{p} \text{ for $0 \leq j \leq n$} \}
\\
&=
\# \{  (x_0, \ldots, x_R) \in (\FF_p^*)^{R+1} : x^{m_j}= a  \text{ for $0 \leq j \leq R$} \}
\\
&=\# \left\{
(\ell_0, \ldots, \ell_R) \in (\ZZ / (p-1) \ZZ)^{R+1} :
\begin{array}{l}
m_j  \ell_j \equiv u \bmod{p-1}\\
\text{for $0 \leq j \leq R$}
\end{array}
\right\}\\
&=
\begin{cases}
         \prod_{j=0}^R  \gcd( m_j, p-1 )
         & \text{if $\gcd( m_j, p-1 ) \mid  u$  for   $0 \leq j \leq R$,}\\
         0
         &\text{otherwise.}
\end{cases}.
\end{align*}
In this way we see that
\begin{align*}
\sum_{a \in \FF_p^{\times}} \# \{ \k \in \FF_p^{n+1}: ~&a h_j^{m_j} \equiv k_j^{m_j} \bmod{p} \text{ for $0 \leq j \leq n$} \}\\
&=\frac{\prod_{0\leq i\leq n}\gcd(m_i,p-1)}{
\mathrm{lcm}
\left(\gcd(m_0,p-1),\dots,\gcd(m_n,p-1)\right)}
(p-1).
\end{align*}
The factor in front of $(p-1)$ is $1$ when  $p\in \mathsf{Q}_{\mathbf{m}}$  and at most
$m_*^{n}$ in general. The statement of the lemma now follows.
\end{proof}

We are now ready to analyse the singular series in
\eqref{Nthinbound2}.
Let us put $c'_j = \varepsilon_j c_j$  for indices  $0 \leq j \leq n$.  We recall from \eqref{eq:SS} that
$$
\mathfrak{S}_{ \mathbf{c}' ; {\w}  }(\mathbf{h},{H_i};0) = \sum_{q=1}^\infty \frac{1}{q^{n+1}}
\sum_{\substack{ 0 \leq a < q \\   \gcd(a,q) = 1 } }
\ \prod_{j=0}^n \
 \sum_{ 0 \leq k < q  }
e\left(\frac{a}{q}  c_{j}' w_{j}   (H_i k + h_{j})^{m_j}  \right).
$$
Put
$$
\mathcal{B}_{\w}(p^t) = \frac{1}{p^{t(n+1)} }
\sum_{\substack{ 0 \leq a < p^t \\   \gcd(a,p^t) = 1 } }
\prod_{j=0}^n \  \sum_{ 0 \leq k < p^t  }
e \left(\frac{a}{p^t} c'_{j} {w_{j}} (H_i k + h_{j})^{m_j}  \right),
$$
so that
\begin{equation}
\label{ss1}
\mathfrak{S}_{ \mathbf{c}';  \w }(\mathbf{h}, H_i;0)
= \prod_{p} \left( 1 + \sum_{t=1}^{\infty} \mathcal{B}_{\w}(p^t) \right).
\end{equation}
If $p \nmid H_i$ then
$$
1 + \sum_{t=1}^{\infty} \mathcal{B}_{\w}(p^t)  =1 + \sum_{t=1}^{\infty} \frac{1}{p^{t(n+1)} }
\sum_{\substack{ 0 \leq a < p^t \\   \gcd(a,p^t) = 1 } }
\prod_{j=0}^n \  \sum_{ 0 \leq k < p^t  } e \left(\frac{a}{p^t} c'_{j} w_{j} k^{m_j}  \right).
$$
It now follows from \eqref{eq:vaughan} that
\begin{align*}
\prod_{p \nmid H_i} \left( 1 + \sum_{t = 1}^{\infty} \mathcal{B}_{\w}(p^t) \right)
&=
\sum_{\substack{q = 1 \\ \gcd(q,H_i) = 1 } }^\infty \frac{1}{q^{n+1}}
\sum_{\substack{ 0 \leq a <q \\   \gcd(a,q) = 1 } }
\prod_{j=0}^n \  \sum_{ 0 \leq k < q  }
e\left(\frac{a}{q} c'_{j} w_j k^{m_j}  \right)
\\
&\ll
\sum_{\substack{q = 1 \\ \gcd(q,H_i) = 1 } }^\infty
q^{1 - \sum_{j=0}^n \frac{1}{m_j} + \varepsilon }
\prod_{j=0}^n \gcd(q,w_j)^{\frac{1}{m_j}},
\end{align*}
for any $\varepsilon>0$.
Moreover, in the usual way, for any prime $p$ we have
\begin{equation}
\label{ss1'}
1 + \sum_{t=1}^{T} \mathcal{B}_{\w}(p^t)=
p^{-n T} N(p^T),
\end{equation}
 where
$$
N(p^T)=  \# \left\{  \k \bmod{p^{T}} : \sum_{j=0}^n c'_j  {w_j} (H_i k_j + h_j)^{m_j}\equiv 0
 \bmod p^{T}  \right\}.
$$
In order to  deal with primes  $p \mid H_i$, we
require the following simple form of Hensel's lemma.

\begin{lem}
\label{lem Hens}
Let $m, y, T \in \mathbb{N}$ and let $A, B \in \mathbb{Z}$. Assume that  $p$ is prime such that  $p \nmid A m  y$ and
$A y^m + B \equiv 0 \bmod p$.  Then
$$
\# \{ x \bmod p^T :  A x^m + B \equiv 0 \bmod p^T,  ~x \equiv y \bmod p  \} = 1.
$$
\end{lem}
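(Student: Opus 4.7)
My plan is to prove this by induction on $T$, which is just the standard formulation of Hensel's lifting lemma applied to the polynomial $f(x) = Ax^m + B \in \ZZ[x]$. The base case $T=1$ is given by hypothesis, so the induction step is the only real content.

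For the induction step, suppose we have established uniqueness and existence of a lift $x_T \bmod p^T$ with $x_T \equiv y \bmod p$ and $f(x_T) \equiv 0 \bmod p^T$. Any lift $x_{T+1} \bmod p^{T+1}$ of this solution to $f \equiv 0 \bmod p^{T+1}$ satisfying $x_{T+1}\equiv y\bmod p$ must have the form $x_{T+1} = x_T + p^T z$ for some $z \bmod p$. Expanding,
\begin{equation*}
f(x_T + p^T z) = f(x_T) + p^T z f'(x_T) + p^{2T}(\dots),
\end{equation*}
where $f'(x) = mAx^{m-1}$. Writing $f(x_T) = p^T N$ for some $N \in \ZZ$, the condition $f(x_{T+1}) \equiv 0 \bmod p^{T+1}$ becomes
\begin{equation*}
N + z f'(x_T) \equiv 0 \bmod p.
\end{equation*}
Since $x_T \equiv y \bmod p$, we have $f'(x_T) \equiv mAy^{m-1} \bmod p$. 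The hypothesis $p \nmid Amy$ ensures $p \nmid mA$ and $p \nmid y$, whence $f'(x_T)$ is a unit modulo $p$. Therefore $z$ is uniquely determined modulo $p$, which simultaneously establishes existence and uniqueness of the lift $x_{T+1} \bmod p^{T+1}$.

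This is the entirety of the argument; there is no real obstacle, since the nondegeneracy condition $p\nmid f'(y)$ is built into the hypothesis $p\nmid Amy$. If one prefers an even shorter presentation, one could simply cite any standard reference for Hensel's lemma (e.g.\ \cite{V}) and observe that $p\nmid Amy$ together with $Ay^m+B\equiv 0\bmod p$ is exactly the nondegenerate hypothesis needed to conclude that $y$ lifts uniquely to a root of $Ax^m+B$ modulo every power $p^T$.
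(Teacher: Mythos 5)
Your proof is correct and is precisely the standard inductive Hensel lifting argument that the paper has in mind; the paper in fact states Lemma~\ref{lem Hens} without proof, treating it as a known ``simple form of Hensel's lemma.'' Your verification that the hypothesis $p\nmid Amy$ is exactly what makes $f'(y)=mAy^{m-1}$ a unit mod $p$ (so the linear term in $z$ controls the lift) is the whole content, and you handle the higher-order terms correctly since $2T\geq T+1$ for $T\geq 1$.
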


Let   $p\mid H_i$. Then $H_i=pH_i'$ for some $H_i'\in \NN$ that is coprime to $p$.
It readily follows that
$$
N(p^T)
= p^{n+1} \# \left\{  \k \bmod p^{T - 1} :
\sum_{j=0}^n c'_j  {w_j} (p k_j + h_j)^{m_j} \equiv 0 \bmod p^{T}  \right\}.
$$
If  $\mathbf{h} \bmod p$ is a solution to the congruence
$$
c'_0  {w_0} h_0^{m_0}+\cdots+c'_n  {w_n} h_n^{m_n} \equiv 0 \bmod p,
$$
then necessarily it is a non-singular solution by   (\ref{cond nonsing}), since each prime $p\mid H_i$ is  large enough that $p\nmid \prod_j c_j'm_j$.
Hence for $T > 1$ it follows from Lemma \ref{lem Hens} that
$
N(p^T)=
p^{n+1} p^{n (T - 1)}=p^{nT+1}.
$
Bringing this together with  (\ref{ss1}) and  (\ref{ss1'}) we conclude that
\begin{align*}
\mathfrak{S}_{ \mathbf{c}';  \w }(\mathbf{h},{H_i};0)
&=
H_i \prod_{p \nmid H_i}   \left( 1 + \sum_{t=1}^{\infty} \mathcal{B}_{\w}(p^t) \right)\\
&\ll H_i \sum_{\substack{q = 1 \\ \gcd(q,H_i) = 1 } }^\infty
q^{1 - \sum_{j=0}^n \frac{1}{m_j} + \varepsilon }
\prod_{j=0}^n \gcd(q,w_j)^{\frac{1}{m_j}}.
\end{align*}
Inserting this into
\eqref{Nthinbound2}, our work so far has shown that
\begin{equation}\label{eq:dark}
N_{\Omega}(B)
\ll
B^{\Gamma} \sum_{i=1}^t U(B,H_i)
+B^{\Gamma - \eta},
\end{equation}
for any $\eta>0$,
where
$$
U(B,H_i)=
\frac{1}{H_i^{n}}
\sideset{}{^{(1)}}\sum_{\mathbf{v}}
\hspace{-0.2cm}
\sum_{ \mathbf{h} \in \Omega_{\w; H_i}^{(i)}}
\sum_{\substack{q = 1\\\gcd(q,H_i)=1 } }^\infty
\hspace{-0.4cm}
q^{1 - \sum_{j=0}^n \frac{1}{m_j} + \varepsilon }
\prod_{j=0}^n \frac{\gcd(q,w_j)^{\frac{1}{m_j}}}{w_j^{1/m_j} }.
$$
Let $1\leq i\leq t$ and recall that $H_i=\prod_{p\in \mathcal{S}_i}p$.
Appealing to
\eqref{type I est},
(\ref{type II est}), together with
 Lemmas \ref{lem:prang} and
\ref{lem:omega bound1''}, we deduce that
$$
\# \Omega_{\w;p}^{(i)} \leq
\begin{cases}
         C_1 m_*^n p^{n-1}
         &\text{if $p\nmid w_j$ $\forall~j$ and $\Omega_i$ is type I},\\
         \kappa p^n
         &\text{if $p\nmid w_j$  $\forall~ j$,  $\Omega_i$ is type II and $p\in \mathsf{P}_{\Omega_i}\cap \mathsf{Q}_{\mathbf{m}}$},\\
         m_* p^{n}
         &\text{otherwise},
    \end{cases}
$$
for some $\kappa\in (0,1).$

Suppose first that  $\Omega_i$ is type I
and let $\omega(H_i)=\#\mathcal{S}_i$.
Then
\begin{align*}
\frac{\#\Omega_{\w; H_i}^{(i)}}{H_i^n}\leq
\prod_{\substack{p\mid H_i\\ p\nmid w_0\dots w_n}}
\frac{C_1 m_*^n}{ p }
\prod_{\substack{p\mid H_i\\ p\mid w_0\dots w_n}}
m_*
&=  \frac{(C_1m_*^n)^{\omega(H_i)}}{H_i}
\prod_{\substack{p\mid \gcd(H_i,  w_0\dots w_n)}}
\frac{pm_* }{C_1m_*^n}\\
&\ll   H_i^{-1+\ve}
\gcd(H_i,  w_0\dots w_n),
\end{align*}
for any $\ve>0$. But then it follows that
$$
U(B,H_i)\ll H_i^{-1+\ve}
\sideset{}{^{(1)}}\sum_{\mathbf{v}}
\hspace{-0.2cm}
\gcd(H_i,  w_0\dots w_n)
\hspace{-0.4cm}
\sum_{\substack{q = 1\\\gcd(q,H_i)=1 } }^\infty
\hspace{-0.4cm}
q^{1 - \sum_{j=0}^n \frac{1}{m_j} + \varepsilon }
\prod_{j=0}^n \frac{\gcd(q,w_j)^{\frac{1}{m_j}}}{w_j^{1/m_j} }.
$$
Observe that  $\val_p(w_j) \geq  m_j + 1$ whenever
 $p \mid  w_j$. In this way we can see that
 $$
\val_p\left(\frac{ \gcd(H_i,  w_0\dots w_n)}{
\prod_{j=0}^n w_j^{1/m_j}}\right)\leq \frac{-1}{m_*}
 $$
 for every prime $p$ such that $p\mid H_i$ and $p \mid  w_0 \dots w_n$. Thus,
 on removing common factors of $w_j$ with $H_i$, one easily concludes that
\begin{align*}
U(B,H_i)
&\leq H_i^{-1+\ve} g(H_i)
\sum_{\substack{
w_j\leq B\\
\gcd(w_j, H_i) = 1
\\
\text{\eqref{eq:sum1} holds}}}
\sum_{\substack{q = 1 \\ \gcd(q, H_i ) = 1} }^\infty
q^{1 - \sum_{j=0}^n \frac{1}{m_j} + \varepsilon }
\prod_{j=0}^n \frac{\gcd(q, w_j)^{\frac{1}{m_j}}}{w_j^{1/m_j} },
\end{align*}
where 
$
g(H_i)
=\prod_{p \mid  H_i} ( 1 + O(p^{-1/m_*}) ).
$ 
It is an elementary exercise to  show that 
$$
g(H_i)\leq \exp\left(\frac{C_2(\log H_i)^{1-1/m_*}}{\log\log H_i}\right),
$$
for an absolute constant $C_2>0$.
Taking $g(H_i)\ll H_i^{\ve}$
for any $\ve>0$, it now follows from 
   \eqref{eq:B} that
$
U(B,H_i)\ll_\ve H_i^{-1+ 2\ve}.
$
Once inserted into \eqref{eq:dark}  and choosing
$\mathcal{S}_i$ in such a way that
$H_i$ is a small enough power of $B$ for \eqref{H cond},
this shows that thin subsets of type I make a satisfactory overall contribution.

Suppose next  that  $\Omega_i$ is type II.
We may assume that $p\geq m_*/\kappa$ for each $p\mid H_i$.
Then
\begin{align*}
\frac{\#\Omega_{\w; H_i}^{(i)}}{H_i^n}\leq
\prod_{\substack{p\mid H_i\\ p\nmid w_0\dots w_n \\  p\in \mathsf{P}_{\Omega_i}\cap \mathsf{Q}_{\mathbf{m}}}}
\kappa
\prod_{\substack{p\mid H_i\\ p\mid w_0\dots w_n}}
m_*
&\leq
\prod_{\substack{p\mid H_i\\ p\in \mathsf{P}_{\Omega_i}\cap \mathsf{Q}_{\mathbf{m}}}}
\kappa
\prod_{\substack{p\mid \gcd(H_i,  w_0\dots w_n)}}
\frac{m_* }{\kappa}\\
&\leq
\gcd(H_i,  w_0\dots w_n)
\prod_{\substack{p\mid H_i\\ p\in \mathsf{P}_{\Omega_i}\cap \mathsf{Q}_{\mathbf{m}}}}
\kappa.
\end{align*}
We choose $\mathcal{S}_i$ to be set of primes $1\ll p\leq \log B/\log \log B$ drawn from the
set $\mathsf{P}_{\Omega_i}\cap \mathsf{Q}_{\mathbf{m}}$. In particular $H_i$ satisfies (\ref{H cond}).
Moreover, it  follows from our assumption
\eqref{eq:density} that this set of primes has positive lower density $\rho$, say. But then
$$
\prod_{\substack{p\mid H_i\\ p\in \mathsf{P}_{\Omega_i}\cap \mathsf{Q}_{\mathbf{m}}}}
\kappa 
=
\prod_{\substack{p\mid H_i}}
\kappa
\leq \left(\frac{1}{\kappa 
}\right)^{-\frac{\rho \log B}{(\log\log B)^2}}.
$$
Feeding this into the argument that we have just given  yields
$$
U(B,H_i)\ll 
\exp\left(\frac{C_2(\log H_i)^{1-1/m_*}}{\log\log H_i} -\frac{\log(1/\kappa)\rho \log B}{(\log\log B)^2}\right)
\ll \frac{1}{(\log B)^{100}},
$$
from which it follows that
the thin subsets of type II make a satisfactory overall contribution to \eqref{eq:dark}
 under the assumption \eqref{eq:density}.
This completes the proof of Theorem \ref{t:2}.

\end{document}